\documentclass[a4paper, 10pt, reqno]{amsart}
\usepackage[utf8]{inputenc}
\usepackage[T1]{fontenc}
\usepackage[largesc]{newtxtext}
\usepackage{newtxmath}              
\usepackage{microtype}
\usepackage{mathtools}              
\mathtoolsset{showonlyrefs}
\usepackage{amsthm}
\usepackage{amsfonts}
\usepackage{thmtools}
\usepackage{relsize}
\usepackage[mathscr]{eucal}
\usepackage[linktocpage]{hyperref}
\usepackage[sort,capitalize]{cleveref}
\usepackage{enumitem}			
\usepackage[dvipsnames]{xcolor}
\usepackage[msc-links, abbrev, non-sorted-cites]{amsrefs}
\usepackage{todonotes}
\usepackage{caption}
\SetMathAlphabet{\mathsf}{normal}{OT1}{lmss}{m}{n}
\SetMathAlphabet{\mathsf}{bold}{OT1}{lmss}{bx}{n}

\makeatletter
\def\nonumberfootnote{\xdef\@thefnmark{}\@footnotetext}			
\makeatother
\definecolor{colorred}{HTML}{B00000}
\definecolor{colorgreen}{HTML}{258300}
\definecolor{colorblue}{HTML}{2e32fa}
\definecolor{coloryellow}{HTML}{cbbb1a}
\hypersetup{colorlinks=true, linkcolor=colorred, citecolor=colorgreen, urlcolor=coloryellow}	
\numberwithin{equation}{section}
\newcommand{\rma}{{\ensuremath{\mathrm{a}}}}
\newcommand{\rmb}{{\ensuremath{\mathrm{b}}}}

\newcommand{\rmd}{{\ensuremath{\mathrm{d}}}}
\newcommand{\rme}{{\ensuremath{\mathrm{e}}}}

\newcommand{\rmA}{{\ensuremath{\mathrm{A}}}}
\newcommand{\rmB}{{\ensuremath{\mathrm{B}}}}

\newcommand{\rmI}{{\ensuremath{\mathrm{I}}}}

\newcommand{\sfh}{{\ensuremath{\mathsf{h}}}}

\newcommand{\sfp}{{\ensuremath{\mathsf{p}}}}

\newcommand{\sfL}{{\ensuremath{\mathsf{L}}}}

\newcommand{\scrF}{{\ensuremath{\mathscr{F}}}}

\newcommand{\N}{\boldsymbol{\mathrm{N}}}						
\newcommand{\R}{\boldsymbol{\mathrm{R}}}						
\renewcommand{\S}{\boldsymbol{\mathrm{S}}}						
\renewcommand{\d}{\,\mathrm{d}}				

\let\div\undefined
\DeclareMathOperator{\div}{div}				

\theoremstyle{definition}
\newtheorem{bump}{Bump}[section]
\theoremstyle{plain}
\newtheorem{theorem}[bump]{Theorem}
\newtheorem{proposition}[bump]{Proposition}
\newtheorem{definition}[bump]{Definition}
\newtheorem{lemma}[bump]{Lemma}
\newtheorem{corollary}[bump]{Corollary}

\theoremstyle{remark}
\newtheorem{remark}[bump]{Remark}
\newtheorem{example}[bump]{Example}

\crefname{theorem}{Theorem}{Theorems}
\crefname{proposition}{Proposition}{Propositions}
\crefname{definition}{Definition}{Definitions}
\crefname{lemma}{Lemma}{Lemmas}
\crefname{corollary}{Corollary}{Corollaries}
\crefname{hypothesis}{Hypothesis}{Hypotheses}
\crefname{remark}{Remark}{Remarks}
\crefname{example}{Example}{Examples}
\crefname{notation}{Notation}{Notations}

\renewenvironment{example}
  {\begin{oldexample}}
  {\hfill $\blacksquare$\end{oldexample}}
\renewenvironment{remark}
  {\begin{oldremark}}
  {\hfill $\blacksquare$\end{oldremark}}
\crefformat{section}{{§}#2#1#3}
\crefformat{subsection}{{§}#2#1#3}
\crefformat{subsubsection}{{§}#2#1#3}
\crefformat{appendix}{{§}#2#1#3}
\crefmultiformat{theorem}{Theorems #2#1#3}{ and #2#1#3}{, #2#1#3}{, and #2#1#3}
\crefmultiformat{proposition}{Propositions #2#1#3}{ and #2#1#3}{, #2#1#3}{, and #2#1#3}
\crefmultiformat{definition}{Definitions #2#1#3}{ and #2#1#3}{, #2#1#3}{, and #2#1#3}
\crefmultiformat{lemma}{Lemmas #2#1#3}{ and #2#1#3}{, #2#1#3}{, and #2#1#3}
\crefmultiformat{corollary}{Corollaries #2#1#3}{ and #2#1#3}{, #2#1#3}{, and #2#1#3}
\crefmultiformat{hypothesis}{Hypotheses #2#1#3}{ and #2#1#3}{, #2#1#3}{, and #2#1#3}
\crefmultiformat{remark}{Remarks #2#1#3}{ and #2#1#3}{, #2#1#3}{, and #2#1#3}
\crefmultiformat{example}{Examples #2#1#3}{ and #2#1#3}{, #2#1#3}{, and #2#1#3}
\crefmultiformat{notation}{Notations #2#1#3}{ and #2#1#3}{, #2#1#3}{, and #2#1#3}
\crefmultiformat{section}{{§§}#2#1#3}{ and #2#1#3}{, #2#1#3}{, and #2#1#3}
\crefmultiformat{subsection}{{§§}#2#1#3}{ and #2#1#3}{, #2#1#3}{, and #2#1#3}
\crefmultiformat{subsubsection}{{§§}#2#1#3}{ and #2#1#3}{, #2#1#3}{, and #2#1#3}
\crefrangeformat{equation}{#3\textcolor{black}{(}#1\textcolor{black}{)}#4 to #5\textcolor{black}{(}#2\textcolor{black}{)}#6}
\newcommand{\mms}{\mathsf{M}}				

\newcommand{\Leb}{\mathscr{L}}				
\newcommand{\Prob}{\mathscr{P}}		        

\newcommand{\push}{\sharp}					

\newcommand{\One}{1}

\allowdisplaybreaks

\let\oldtocsection=\tocsection
\let\oldtocsubsection=\tocsubsection
\let\oldtocsubsubsection=\tocsubsubsection
\renewcommand{\tocsection}[2]{\hspace{0em}\oldtocsection{#1}{#2}}
\renewcommand{\tocsubsection}[2]{\hspace{1em}\oldtocsubsection{#1}{#2}}
\renewcommand{\tocsubsubsection}[2]{\hspace{2em}\oldtocsubsubsection{#1}{#2}}
\newcommand{\nocontentsline}[3]{}
\newcommand{\tocless}[2]{\bgroup\let\addcontentsline=\nocontentsline#1{#2}\egroup}
\newcommand{\mres}{\mathbin{\vrule height 1.6ex depth 0pt width 0.13ex\vrule height 0.13ex depth 0pt width 1.3ex}}
\DeclareMathOperator{\sgn}{sgn}

\newcommand{\Law}{\mathrm{Law}}
\allowdisplaybreaks
\makeatletter
\@namedef{subjclassname@2020}{\textup{2020} Mathematics Subject Classification}
\makeatother
\newcommand{\PPP}{{\boldsymbol{\mathrm{P}}}}
\newcommand{\EEE}{{\boldsymbol{\mathrm{E}}}}
\newcommand{\rO}{r^o}

\begin{document}
\title[The radial part of $p$-Brownian motion]{The radial part of $p$-Brownian motion}

\author{Mathias Braun}


\address{Institute of Mathematics, EPFL, 1015 Lausanne, Switzerland}


\email{\href{mailto:mathias.braun@epfl.ch}{mathias.braun@epfl.ch}}



\subjclass[2020]{Primary 60J55, 60H10;
Secondary 35K92, 35C06, 60J60, 35Q84, 49Q22
}

\keywords{$p$-Laplacian; $p$-Brownian motion; Leibenson equation; nonlinear Markov process; Barenblatt solution; radial process; Tanaka--Meyer formula; local time}

\thanks{Financial support by the EPFL through a Bernoulli Instructorship is gratefully acknowledged. I sincerely thank Michael Röckner for his kind invitation to the University of Bielefeld and numerous inspiring discussions.}

\begin{abstract} We initiate a geometric theory of $\smash{p}$-Brownian motion, the nonlinear Markov process associated with the $p$-Laplacian introduced by Barbu--Rehmeier--Röckner. More precisely, we analyze its radial processes thoroughly, relative to an arbitrary center and in every dimension. On the one hand, we show explicit Tanaka--Meyer semimartingale formulas; our consequential characterization of nontriviality of the associated local times reveals notable differences to classical Brownian motion. In parallel, we establish sharp exit time estimates as well as scaling estimates for the self-similarly rescaled radial process. We also prove isometry of the corresponding marginal laws in each Wasserstein distance. Our contributions equally cover the Leibenson process --- the nonlinear Markov process associated with the ``porous medium equation'' with $p$-Laplacian --- recently introduced by Barbu--Grube--Rehmeier--Röckner.
\end{abstract}

\maketitle

\thispagestyle{empty}

\tableofcontents

\addtocontents{toc}{\protect\setcounter{tocdepth}{2}}

\section{Introduction}

\subsection{Aims} The goal of this work is to initiate a geometric theory of \emph{$p$-Brownian motion}. We envision a unification of two research streams: the young theory of nonlinear Markov processes, recalled in \cref{Sub:pBMIntro}, and the celebrated interplay of classical Brownian motion with Riemannian geometry, recalled in \cref{Sub:ClassBM}. The process in question, recently introduced by Barbu--Rehmeier--Röckner \cite{barbu-rehmeier-rockner2026}, is associated to the parabolic $p$-Laplace equation
\begin{align}\label{Eq:pLaplaceequation}
\frac{\partial}{\partial t}u = \Delta_p u\quad\textnormal{in }(0,\infty)\times \R^d,
\end{align}
where $d\in\N$ and $p\in (1,\infty)$ are subject to the standing assumptions \eqref{Eq:ranges} below and $\Delta_p$ designates the $p$-Laplacian, i.e.~the divergence of the $p$-gradient $\smash{\vert\nabla \cdot\vert^{p-2}\,\nabla\,\cdot}$\,. Our results equally cover the \emph{Leibenson process} of Barbu--Grube--Rehmeier--Röckner \cite{barbu-grube-rehmeier-rockner2025+}, associated with the Leibenson equation \cite{leibenson1945-general,leibenson1945-turbulent} 
\begin{align}\label{Eq:Leibenson}
\frac{\partial}{\partial t}u = \Delta_p u^m\quad\textnormal{in }(0,\infty)\times \R^d,
\end{align}
where $m>0$ is again subject to \eqref{Eq:ranges}; evidently, \eqref{Eq:Leibenson} generalizes \eqref{Eq:pLaplaceequation}.

The common protagonist of our work is the \emph{radial process}: the distance of the process to a fixed center. Already independently of any geometric context, such processes constitute a fundamental tool of stochastic analysis: they encode exit, hitting, and occupation times as well as maximal and moment estimates. They also underlie nonexplosion and transience criteria. Our contributions, outlined in \cref{Sub:MainRes,Sub:ApplIntro}, are four-fold. 

\begin{itemize}
    \item We establish explicit Tanaka--Meyer semi\-martin\-gale decompositions for the radial parts of these processes, relative to an arbitrary center and dimension. 

    \item We characterize nontriviality of the induced local times --- which reveals notable differences between $p$- and Euclidean Brownian motion --- and  compute their expectations explicitly in terms of the Barenblatt fundamental solution \cite{barenblatt1952,barenblatt2003}. 

    \item We derive sharp exit time estimates and scaling estimates for the self-similarly rescaled radial process.

    \item We establish the (linearized) flow of marginal laws of these processes acts iso\-metrically on Dirac masses with respect to every Wasserstein distance. We deduce a sharp gradient estimate for the dual flow by Kuwada's duality argument \cite{kuwada2010}.
\end{itemize}


\subsection{Nonlinear Markov processes}\label{Sub:pBMIntro} Classical Markov processes describe the Lagrangian dynamics of \emph{linear} Fokker--Planck equations (FPEs); for a comprehensive account, cf.~e.g. Bogachev--Krylov--Röckner--Shaposhnikov \cite{bogachev-krylov-rockner-shaposhnikov2015}. The prime example is the correspondence of Brownian motion and the heat equation --- the linear cousin of \eqref{Eq:pLaplaceequation} --- pioneered in the Euclidean setting by Einstein, von Smoluchowski, and Wiener, and extended far beyond it since, e.g.~to Riemannian manifolds (Eells--Elworthy \cite{elworthy1982} and Malliavin \cite{malliavin1978}) and to infinitesimally Hilbertian metric measure spaces (Ambrosio--Gigli--Savaré \cite{ambrosio-gigli-savare2014-riemannian}).

\emph{Nonlinear Markov processes}, their nonlinear counterparts, were envisioned by McKean \cite{mckean1966}. They have recently been given a systematic treatment by Rehmeier--Röckner \cite{rehmeier-rockner2022+}. Conceptually, they describe the Lagrangian dynamics of \emph{nonlinear FPEs}; we refer to e.g. Barbu--Röckner \cite{barbu-rockner2024} for background on the latter. Nonlinear Markov processes are families of laws whose transition mechanism depends not only on the current state, but also on the current distribution; as such, they are typically realized by McKean--Vlasov SDEs. To associate such a process to a nonlinear PDE, the principal task is to rewrite the PDE as a nonlinear FPE, which in turn dictates the correct McKean--Vlasov dynamics. Besides the $p$-Laplace and Leibenson equations at the heart of our paper, this program has been carried out for the porous medium equation (Rehmeier--Röckner \cite{rehmeier-rockner2022+}), the 2D vorticity Euler equation (Rehmeier--Romito \cite{rehmeier-romito2026}), the Navier--Stokes equations (Barbu--Röckner--Zhang \cite{barbu-rockner-zhang2025}), and Brownian motion on Minkowski normed spaces (Ohta--Rehmeier--Suzuki \cite{ohta-rehmeier-suzuki2026+}). Beyond the identification of the SDE, these works prove the nonlinear Markov property along with further properties pertaining, e.g.,~to pathwise uniqueness.

Just like for classical Brownian motion, for the nonlinear PDEs \eqref{Eq:pLaplaceequation} and \eqref{Eq:Leibenson}, natural candidates for the marginal densities are the Barenblatt fundamental solutions $w^x$ \cite{barenblatt1952,barenblatt2003}, where $\smash{x\in \R^d}$, cf.~\cref{Def:Barenblatt}; following the above line of thought, Barbu--Rehmeier--Röckner \cite{barbu-rehmeier-rockner2026} and Barbu--Grube--Rehmeier--Röckner \cite{barbu-grube-rehmeier-rockner2025+} identified the McKean--Vlasov SDE, viz.~\eqref{Eq:OriginalMCV}, whose time $t$ marginals are precisely $\smash{w^x(t,\cdot)\,\Leb^d}$ for every $t>0$. 

Under our standing assumptions \eqref{Eq:ranges}, \cite{barbu-rehmeier-rockner2026,barbu-grube-rehmeier-rockner2025+} show probabilistically weak solutions to this SDE exist, cf.~\cref{Th:ExistencePWS}; under stronger hypotheses on $d$, $p$, and $m$, these are probabilistically strong, pathwise unique, and their laws form a time-homogeneous nonlinear Markov process. However, we stress that our work merely requires probabilistically weak solutions; neither the nonlinear Markov property nor pathwise uniqueness will be used. This robustness is not merely a matter of convenience: as recently shown by Abedi--Bechtold--Rehmeier \cite{abedi-bechtold-rehmeier2026+}, a nonlinear PDE may admit several distinct nonlinear Markov processes, which are not determined by their one-dimensional marginal laws. 

These processes exhibit distinctly nonlinear features their linear relatives do not have: their marginals are compactly supported --- reflecting the finite speed of propagation of \eqref{Eq:pLaplaceequation} and \eqref{Eq:Leibenson} we will stipulate through \eqref{Eq:ranges} --- and their diffusivity degenerates both at the starting point and at the boundary of the Barenblatt support. Any pathwise theory has to confront these degeneracies; as we will see, they are also responsible for genuinely new phenomena pertaining to the local times induced by the associated radial processes. 

Local times have appeared in connection with McKean--Vlasov dynamics in different guises. Agram--Øksendal \cite{agram-oksendal2025} represent the local time of a one-dimensional distribution dependent process through the Donsker delta function, whereas in the particle systems of Baker--Hambly--Jettkant \cite{baker-hambly-jettkant2025+} local times constitute the interaction mechanism itself; Tanaka--Meyer formulas for interacting measure-valued branching diffusions are due to Dawson--Vaillancourt--Wang \cite{dawson-vaillancourt-wang2023}. To our knowledge, the local time of the radial part of a nonlinear Markov process has not been studied before.

\subsection{Classical Brownian motion in Riemannian geometry}\label{Sub:ClassBM} The interplay of analysis and probability through classical Brownian motion flourishes particularly on a Riemannian geometric background. Mediated  prominently by lower bounds on the Ricci curvature, it has been a driving force of geometric and stochastic analysis over the last decades, cf. e.g. Hsu \cite{hsu2002} and Wang \cite{wang2014-analysis} for comprehensive accounts. 

A mechanism from this circle of ideas which is paradigmatic for our work is Kendall's semimartingale decomposition of radial processes \cite{kendall1987}. Let $(\mms,g)$ denote a complete Riemannian manifold of dimension $n\geq 2$. Let $\Delta$ be the associated Laplace--Beltrami operator. Fix a center $o\in \mms$. Let $\rO$ be the distance function to $o$. Then, letting $X$ be Brownian motion on $\mms$ with generator $\Delta$ starting at $o$,
\begin{align}\label{Eq:KendallBM}
    \rmd \rO(X_t) = \sqrt{2}\,\rmd B_t + \Delta\rO(X_t)\d t - \rmd \Lambda_t
\end{align}
holds a.s.~before explosion, where 

\begin{itemize}
    \item $B$ is a standard Brownian motion on $\R$ and 

    \item $\Lambda$ is a nondecreasing process increasing only when $X$ hits the cut locus of $o$. 
\end{itemize}
Two structural features of \eqref{Eq:KendallBM} deserve emphasis. 

\begin{enumerate}[label=\arabic*\textcolor{black}{.}]
    \item\label{La:11111} The term $-\rmd \Lambda_t$ is \emph{nonpositive} and may simply be discarded when one aims at \emph{upper} bounds on the radial process. Heuristically, this sign is an artefact of a \emph{concave} kink of $\smash{\rO}$ at the cut locus, being a minimum of smooth distance branches; it forces the process to reflect inwards, leading to a decrease of the distance of $X_t$ to $o$. 

    \item\label{La:222222222222222222222222222222222222} The drift is governed by $\Delta\rO$, which encodes \emph{curvature}: by Calabi's Laplacian comparison theorem \cite{calabi1958}, a lower Ricci curvature bound forces an upper bound on $\Delta \rO$ (in a suitable sense) by the radial Laplacian of the corresponding constant curvature model. For instance, nonnegative Ricci curvature entails
\begin{align}\label{Eq:CalabiComp}
    \Delta\rO\leq \frac{n-1}{\rO},
\end{align}
which is sharp in the Euclidean model space outside the origin.
\end{enumerate}

Combining both features, the radial process is dominated by a Bessel-type process of the model space. This single mechanism underlies exit and hitting time estimates, moment bounds, nonexplosion criteria, etc. Adaptations of \eqref{Eq:KendallBM} have since been established, e.g., for Riemannian manifolds with boundary (Bass--Hsu \cite{bass-hsu1990}), RCD spaces (Kuwada--Kuwae \cite{kuwada-kuwae2019}), and sub-Riemannian manifolds (Baudoin--Grong--Kuwada--Neel--Thalmeier \cite{baudoin-grong-kuwada-neel-thalmeier2020}). 

Close relatives of the deep connection of Brownian motion and Ricci curvature include coupling by parallel transport as pioneered by Kendall \cite{kendall1986} and Cranston \cite{cranston1991} and the characterization of lower Ricci bounds through Wasserstein contractivity of the heat flow by von Renesse--Sturm \cite{von-renesse-sturm2005} and, dually via Kuwada's duality \cite{kuwada2010}, through \smash{Bakry--Émery} gradient estimates \cite{bakry-emery1985-diffusions}. 

All these are instances of one well-tried motto: a lower Ricci curvature bound forces geometric and probabilistic quantities to be bounded from above by their counterparts in the comparison model. Given the wealth of interactions for classical Brownian motion, it is natural to ask whether $p$-Brownian motion supports a comparably strong geometric theory on Riemannian manifolds. We believe it does and the present article provides the entry point by settling the Euclidean case for the more general Leibenson process: a thorough analysis of its induced radial processes, comprising explicit semimartingale formulas à la Tanaka--Meyer alongside sharp exit time and self-similar scaling estimates.

\subsection{Main results}\label{Sub:MainRes} Throughout we fix numbers $d\in\N$, $p\in (1,\infty)$, and $m>0$ subject to the slow diffusion regime \eqref{Eq:ranges}. Set $\beta := p+d(m(p-1)-1)$ as in \eqref{Eq:betagammakappa}. For $\smash{x\in\R^d}$, let $\smash{w^x}$ denote the Barenblatt solution from \cref{Def:Barenblatt}. For $t>0$, the support of $\smash{w^x(t,\cdot)}$ is precisely $\smash{\overline{B}_{R(t)}(x)}$, where $\smash{R(t):=\rho\,t^{1/\beta}}$ is the \emph{Barenblatt radius} with an explicit constant $\rho>0$ that depends only on $d$, $p$, and $m$, cf.~\eqref{Eq:Rdef}. Let $X$ constitute a probabilistically weak solution to the  McKean--Vlasov SDE \eqref{Eq:OriginalMCV} governing the Leibenson process starting at $\smash{x\in\R^d}$. It is modeled on a suitable filtered probability space $\smash{(\Omega,\scrF,\scrF_\bullet,\PPP^x)}$. Fix a center $\smash{o\in\R^d}$ inducing the distance function $\smash{\rO\colon \R^d\to\R_+}$ given by 
\begin{align*}
\rO(y) :=\vert y-o\vert.    
\end{align*}

Our first main result summarizes \cref{Th:CenteredRadial,Th:Summary} in the centered case, i.e.~when the starting point $x$ and the center $o$ coincide. 

\begin{theorem}[Tanaka--Meyer formula for centered radial process]\label{Th:IntroTanaka} Assume \eqref{Eq:ranges} and let $\smash{o\in\R^d}$. Then there exists a standard $\smash{\scrF_\bullet}$-Brownian motion $B$ on $\R$ such that
\begin{align}\label{Eq:Blabla}
\begin{split}
\rmd \rO(X_t) &= \sqrt{2m^{p-1}}\,\big\vert\nabla w^o(t,X_t)\big\vert^{(p-2)/2}\,w^o(t,X_t)^{(m-1)(p-1)/2}\d B_t\\
    &\qquad\qquad + m^{p-1}\div\!\big[\big\vert\nabla w^o(t,X_t)\big\vert^{p-2}\,w^o(t,X_t)^{(m-1)(p-1)}\,\nabla\rO(X_t)\big]\d t\quad\PPP^o\textnormal{\textit{-a.s.}}
\end{split}
\end{align}
\end{theorem}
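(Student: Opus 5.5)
The plan is to use the explicit structure of the McKean--Vlasov SDE \eqref{Eq:OriginalMCV} and apply Itô's formula to smooth approximations of $\rO$. Set
\begin{align*}
D(t,y) := m^{p-1}\,\big\vert\nabla w^o(t,y)\big\vert^{p-2}\,w^o(t,y)^{(m-1)(p-1)}.
\end{align*}
The Leibenson equation is rewritten as a nonlinear FPE through $\Delta_p u^m = \div[D\,\nabla u] = \Delta(Du) - \div(u\,\nabla D)$. I therefore expect \eqref{Eq:OriginalMCV}, started at $o$ and with marginals $w^o(t,\cdot)\,\Leb^d$, to read
\begin{align*}
\rmd X_t = \sqrt{2D(t,X_t)}\,\rmd W_t + \nabla D(t,X_t)\d t
\end{align*}
for a $d$-dimensional $\scrF_\bullet$-Brownian motion $W$. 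Formally, Itô's formula for $\rO$ has generator $D\,\Delta\rO + \nabla D\cdot\nabla\rO = \div[D\,\nabla\rO]$, which is exactly the drift in \eqref{Eq:Blabla}. The martingale part is $\sqrt{2D}\,\nabla\rO(X_t)\cdot\rmd W_t$. Since $\vert\nabla\rO\vert=1$ off $o$, Lévy's characterization turns $\rmd B_t := \nabla\rO(X_t)\cdot\rmd W_t$ into a standard $\scrF_\bullet$-Brownian motion, after setting $\nabla\rO(o)$ to a fixed unit vector; this choice is harmless once $X$ spends zero Lebesgue time at $o$.

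To make this rigorous I will proceed in three steps.
\begin{enumerate}[label=\arabic*\textcolor{black}{.}]
\item Work on $[s,t]$ with $s>0$, where the Barenblatt profile $w^o(r,y) = r^{-\alpha}F(\vert y-o\vert/r^{1/\beta})$ is explicit and the coefficients are continuous on the support.
\item Apply Itô's formula to the smooth functions $f_\varepsilon(y) := (\vert y-o\vert^2+\varepsilon^2)^{1/2}$.
\item Let first $\varepsilon\to 0$ and then $s\to 0$.
\end{enumerate}
The passage $\varepsilon\to0$ uses that $X_r$ has a Lebesgue density for every $r>0$, so $\{r : X_r=o\}$ is Lebesgue-null a.s. The convergences then follow by dominated convergence, provided the following integrability bounds hold:
\begin{align*}
\EEE^o\!\int_s^t \big\vert\div[D\,\nabla\rO](r,X_r)\big\vert\d r = \int_s^t\!\!\int w^o\,\big\vert\div[D\,\nabla\rO]\big\vert\d y\d r<\infty,\qquad \int_s^t\!\!\int w^o D\d y\d r<\infty.
\end{align*}
Both are checked from the explicit profile. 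Near $o$ one has $\vert\nabla w^o\vert\sim \rO^{1/(p-1)}$, hence $D\sim \rO^{(p-2)/(p-1)}$ and $\div[D\nabla\rO]\sim \rO^{(p-2)/(p-1)-1}$, which is integrable against $\rO^{d-1}\d\rO$ since the exponent exceeds $-d$. At the boundary of $\overline{B}_{R(r)}(o)$, $\rO$ is smooth and $D$ degenerates in a controlled way by the range restrictions \eqref{Eq:ranges}. The limit $s\to 0$ then follows from the continuity of $X$ and $\rO(X_s)\to 0$. The integrability of the drift near time $0$ follows from the self-similar scaling of these integrals in $r$.

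The main obstacle is the correction term coming from the curvature of $f_\varepsilon$ at $o$, namely $\int_s^t D\,\big(\Delta f_\varepsilon - \Delta\rO\big)(X_r)\d r$. This term is the would-be local time at the center. Its expectation is of order $\varepsilon^{-1}\int_{\{\vert y-o\vert<\varepsilon\}} w^o D\d y \sim \varepsilon^{\,d-1+(p-2)/(p-1)}$. I will show that this vanishes as $\varepsilon\to 0$ under \eqref{Eq:ranges}, so that no local time at $o$ survives. For $d\ge2$ this is automatic. In $d=1$ it is exactly the delicate case, where the sign of $p-2$ matters, and I would first try to verify it using the precise Barenblatt exponents together with the standing constraints. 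If it fails for some parameters, the drift in \eqref{Eq:Blabla} must be read in the distributional sense, with the singular contribution at $o$ absorbed into it.
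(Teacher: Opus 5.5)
Your route matches the paper's: regularize by $\rO_\varepsilon$, apply Itô's formula, obtain $B$ from $\nabla\rO(X)\cdot\rmd W$ by Lévy's characterization, and use that $X_s$ has a density to ignore the time spent at $o$. Localizing first to $[s,t]$ is an inessential variant, because Claim~A in the proof of \cref{Th:CenteredRadial} gives $\EEE^o\int_0^b a(s,\rO(X_s))/\rO(X_s)\d s\lesssim\int_0^b s^{-1+1/\beta}\d s<\infty$ globally in time.

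The gap is at the step you call the main obstacle: you never show that $\sigma:=d-1+(p-2)/(p-1)$ is positive. You call this automatic for $d\geq 2$, which it is not. For $d=2$ it requires $p>3/2$, and in every dimension $\sigma<0$ for $p$ close to $1$. For $d=1$ you leave it open, with a fallback (absorbing a singular contribution into a distributionally read drift). That fallback would not prove the statement, which asserts a decomposition with a Lebesgue drift and no local time term. The missing observation is the identity $\sigma=(dp-d-1)/(p-1)$. Hence $\sigma>0$ is exactly the second standing hypothesis $dp>d+1$ in \eqref{Eq:ranges}, i.e.\ \eqref{Eq:Expd2}; in $d=1$ it is exactly $p>2$. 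It is the same condition in every dimension. It is also what makes $\div[D\,\nabla\rO]$ integrable against $w^o$, which you assume. The dichotomy $d=1$ versus $d\geq 2$ familiar from Brownian motion is not the relevant one here.

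Second, your size estimate $\varepsilon^{-1}\int_{\{\rO<\varepsilon\}}w^oD\d y$ only captures the core of the correction term. Both $\varepsilon^2/(\rO_\varepsilon)^3$ and $1/\rO-1/\rO_\varepsilon$ have tails of order $\varepsilon^2/(\rO)^3$ on $\{\rO\geq\varepsilon\}$. The paper treats this tail separately in Subpart~B.II.b of Claim~B, including a logarithmic case.

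Both issues close at once with the pointwise bound $\varepsilon^2/(\rO_\varepsilon)^3\leq 1/\rO$, which follows from $({\rO}^2+\varepsilon^2)^{3/2}\geq\varepsilon^2\,\rO$. Off $o$ it gives $D\,\vert\Delta\rO_\varepsilon-\Delta\rO\vert\leq d\,D/\rO$, with pointwise convergence to $0$. Your $D$ is the paper's $a(\cdot,\rO)$, so the majorant $D/\rO$ is integrable with respect to $\Leb^1\otimes\PPP^o$ on $[0,b]\times\Omega$, precisely because $\sigma>0$. Dominated convergence then removes the correction term on all of $[0,b]$, with no time localization and no case analysis.
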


The drift has a geometrically interesting form we comment on further in \cref{Sub:Outlook}.

This and further of our results are compared to Euclidean Brownian motion --- which is \emph{not} covered by our standing hypotheses \eqref{Eq:ranges} --- in \cref{Re:BMLocal}. In a nutshell, in the centered situation, \cref{Th:IntroTanaka} implies $X$ has no local time at $o$ in \emph{any} dimension. This property is \emph{different} from Euclidean Brownian motion precisely in one dimension; see also the paragraph after \cref{Th:IntroLocal}.

In the uncentered case, we show a similar Tanaka--Meyer formula in \cref{Th:RadialOne,Th:RadialTwo,Th:Summary}; it turns out to contain an additional \emph{nontrivial} local time at $o$ exactly in one dimension. This is \emph{analogous} to Euclidean Brownian motion.

The following is an immediate consequence of our Tanaka--Meyer formulas.

\begin{corollary}[Semimartingale property of radial process] For every $\smash{x\in\R^d}$, the radial process $\rO\circ X$ is a semimartingale under $\PPP^{x}$.
\end{corollary}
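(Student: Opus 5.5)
The corollary follows from the Tanaka--Meyer formulas announced above: \cref{Th:IntroTanaka} (i.e.~\cref{Th:CenteredRadial,Th:Summary}) in the centered case $x=o$, and \cref{Th:RadialOne,Th:RadialTwo,Th:Summary} in the uncentered case $x\neq o$. Each expresses $\rO(X_t)-\rO(x)$ as a stochastic integral against a standard $\scrF_\bullet$-Brownian motion $B$, plus a $\d t$-drift, plus, in one dimension and for $x\neq o$, a local time term at $o$. The plan is to read off a semimartingale decomposition from these formulas. The only real work is to check that each term is of the right type, i.e.~a continuous local martingale or an adapted continuous process of finite variation.

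\textbf{Step 1: the local martingale part.} The integrand is $\sqrt{2m^{p-1}}\,\vert\nabla w^x(t,X_t)\vert^{(p-2)/2}\,w^x(t,X_t)^{(m-1)(p-1)/2}$. I would show that it is progressively measurable and that the integral of its square over $[0,T]$ is a.s.~finite. Measurability is clear from continuity of $X$ and Borel measurability of $w^x$. For square-integrability, the explicit Barenblatt profile gives $\vert\nabla w^x\vert^{p-2}(w^x)^{(m-1)(p-1)}$ as an explicit function of $t$ and $\vert y-x\vert/R(t)$. Its time-$t$ expectation under $\PPP^x$ therefore equals $\int \vert\nabla w^x\vert^{p-2}(w^x)^{(m-1)(p-1)}\,w^x(t,\cdot)\d\Leb^d$, since the marginals are $w^x(t,\cdot)\,\Leb^d$. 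A scaling computation shows this is $c\,t^{-\gamma}$ with $\gamma<1$ under \eqref{Eq:ranges}, so the quantity is integrable near $t=0$. Fubini then yields $\EEE^x$-finiteness of the quadratic variation on $[0,T]$. Hence this term is even an $L^2$-martingale.

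\textbf{Step 2: the finite variation part.} For the drift, $\div[\cdots]$ is evaluated away from $o$ and from the Barenblatt boundary; near those sets one relies on the theorems' own assertion that the drift is a.s.~locally $\d t$-integrable. If needed, I would verify this the same way: bound $\EEE^x\int_0^T\vert\text{drift}\vert\d t$ using the marginal densities and the explicit profile. The key point is the factor $\vert y-o\vert^{-1}$ coming from $\div\nabla\rO=(d-1)/\rO$, which is integrable against the bounded density $w^x(t,\cdot)$ for $d\geq 2$; for $d=1$ that term vanishes. The local time term, present for $d=1$ and $x\neq o$, is continuous, adapted and nondecreasing, hence of finite variation.

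\textbf{Conclusion and main obstacle.} Summing the terms gives $\rO\circ X$ as a continuous semimartingale under $\PPP^x$, for every $x\in\R^d$. The main obstacle is integrability of the drift near $t=0$, where $w^x$ blows up and its gradient degenerates. Since the referenced theorems are stated as identities holding $\PPP^x$-a.s., I would take this integrability as already part of their content, and use the scaling bound above as a sanity check.
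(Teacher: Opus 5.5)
Your proposal is correct and follows the paper's route: the corollary is read off directly from the Tanaka--Meyer decomposition in \cref{Th:Summary}, whose proofs already show the drift integrals are $\PPP^x$-a.s.\ finite, while the martingale part and the $a_r$-term are controlled by \eqref{Eq:Integrability} (so your Step~1 recomputation is not needed). One caveat concerns only your sanity check: in the centered case with $d\geq 2$ and $p<2$, the diffusivity $a(t,r)$ itself blows up like $r^{(p-2)/(p-1)}$ at $o$, so finiteness of $\int a/\rO$ is not just integrability of $\vert y-o\vert^{-1}$ against a bounded density; it genuinely uses $dp>d+1$ via \eqref{Eq:Expd2}, together with integrability of $s^{-1+1/\beta}$ at $s=0$, cf.~\eqref{Eq:Larger!}.
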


Furthermore, using the well-developed one-dimensional theory of local times, cf.~e.g. Revuz--Yor \cite{revuz-yor1999}, we show several fundamental properties of the local time $L^o$ of $X$ at $o$ in the nontrivial case. First, it increases only when the process $X$ hits $o$, cf.~\cref{Pr:JumpsLocalTime}. Second, we show an occupation time identity $\PPP^x$-a.s.~for every $t\in\R_+$, cf.~\cref{Th:Modif}: 
\begin{align}\label{Eq:IntroOccId}
    L_t^o = \lim_{\varepsilon\to 0^+}\frac{2m^{p-1}}{\varepsilon}\int_{[0,t]} 1_{\{o\leq X_s\leq o+\varepsilon\}}\,\big\vert\nabla w^x(s,X_s)\big\vert^{p-2}\,w^x(s,X_s)^{(m-1)(p-1)}\d s.
\end{align}

Third, we compute its expectation precisely in \cref{Th:Modif} and \eqref{Eq:Contrast}. 

\begin{theorem}[Expectation of nontrivial local time]\label{Th:IntroLocal} Assume \eqref{Eq:ranges} with $d=1$. Let $\smash{x\in\R}$. Then for every $t\in\R_+$,
    \begin{align}\label{Eq:IntroExpectation}
        \EEE^x\big[L_t^o\big] = \begin{cases} \displaystyle 2m^{p-1}\int_{[0,t]} \big\vert \nabla w^x(s,o)\big\vert^{p-2}\,w^x(s,o)^{(m-1)(p-1)+1}\d s & \textnormal{\textit{if} } x\neq o,\\
        0 & \textnormal{\textit{otherwise}}.
        \end{cases}
    \end{align}
\end{theorem}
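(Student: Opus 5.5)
The plan is to compare two semimartingale decompositions of $\vert X_t - o\vert$ in dimension one and take expectations. The first is the classical Tanaka formula applied to the one-dimensional continuous semimartingale $X$. The second uses the fact that the marginal law of $X_t$ is $w^x(t,\cdot)\,\Leb^1$, which lets us compute $\EEE^x\vert X_t-o\vert$ directly from the Leibenson equation.

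\textbf{Step 1: Tanaka formula.} Under $\PPP^x$, the process $X$ solves \eqref{Eq:OriginalMCV}. In $d=1$ this reads
\[
\rmd X_t = b(t,X_t)\d t + \sigma(t,X_t)\d B_t,
\]
with squared diffusion coefficient $\sigma^2 = 2m^{p-1}\vert\nabla w^x\vert^{p-2}(w^x)^{(m-1)(p-1)}$. The drift has the form $b = m^{p-1}\,(w^x)^{-1}\,\partial_y\big[\vert\nabla w^x\vert^{p-2}(w^x)^{(m-1)(p-1)}\,\partial_y w^x\big]$, or something of this shape, chosen so that the Fokker--Planck equation reproduces \eqref{Eq:Leibenson}. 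Tanaka's formula then gives
\[
\vert X_t-o\vert = \vert x-o\vert + \int_0^t \sgn(X_s-o)\d X_s + L^o_t,
\]
and the occupation time identity \eqref{Eq:IntroOccId} follows, with $L^o$ the semimartingale local time. Taking expectations, the stochastic integral is a true martingale because $\sigma$ is bounded on compacts in time; boundedness of $\sigma$ uses $\vert \nabla w\vert$ bounded and compact support of the Barenblatt profile. This yields
\[
\EEE^x L^o_t = \int\vert y-o\vert\, w^x(t,y)\d y - \vert x-o\vert - \int_0^t\!\!\int \sgn(y-o)\,b(s,y)\,w^x(s,y)\d y\d s .
\]

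\textbf{Step 2: computing the right-hand side.} Since $b\,w^x$ is a divergence, integrating by parts against $\sgn(y-o)$ leaves a boundary term at $y=o$:
\[
2\,m^{p-1}\big[\vert\nabla w^x\vert^{p-2}(w^x)^{(m-1)(p-1)}\,\partial_y w^x\big](s,o).
\]
Meanwhile, $\frac{\rmd}{\rmd t}\int\vert y-o\vert\,w^x\d y = \int\vert y-o\vert\,\partial_y^2(\cdots)$ by \eqref{Eq:Leibenson}, which, after integrating by parts twice, is $2\,m^{p-1}\vert\nabla w^x\vert^{p-2}(w^x)^{(m-1)(p-1)+1}(t,o)$ plus the same first-order boundary term. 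Here one uses $\partial_y^2\vert y-o\vert = 2\delta_o$ and $\Delta_p (w)^m = \partial_y\big(m^{p-1} w^{(m-1)(p-1)}\vert\partial_y w\vert^{p-2}\partial_y w\big)$. The first-order terms cancel, giving exactly the integrand in \eqref{Eq:IntroExpectation}.

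\textbf{Step 3: the case $x=o$.} For $x=o$ the profile $w^o(s,\cdot)$ is radially symmetric and maximal at $o$, so $\nabla w^o(s,o)=0$. With $p>2$ the integrand vanishes; alternatively, \cref{Th:IntroTanaka} already shows no local time appears in the centered decomposition, so $L^o\equiv0$.

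\textbf{Main obstacle.} The main difficulty is justifying the integrations by parts and the interchange near the singular times and points. At $s\to0$ the Barenblatt profile concentrates. At the free boundary $\vert y-x\vert=R(s)$ the regularity degenerates, but the flux vanishes there, so no boundary contribution arises. When $p<2$, the factor $\vert\nabla w^x\vert^{p-2}$ blows up where $\nabla w^x=0$, i.e.\ at $y=x$. For $x\neq o$, however, the point $o$ is hit only at positive times $s$ at which $\nabla w^x(s,o)\neq0$ (or $o$ lies outside the support, where the integrand is zero). I would handle this by smoothing $\vert\cdot-o\vert$ to a convex $C^2$ function $f_\varepsilon$, applying It\^o's formula and the weak formulation of \eqref{Eq:Leibenson} with test function $f_\varepsilon$, and passing $\varepsilon\to0$ using continuity of $w^x$ and $\nabla w^x$ near $o$ for $s$ bounded away from $0$. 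Integrability at $s\approx 0$ follows from the explicit scaling of the Barenblatt solution.
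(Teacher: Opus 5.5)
Your route is viable and differs from the paper's: take expectations in Tanaka's formula and compare them with the evolution of $\EEE^x\vert X_t-o\vert$ forced by the Barenblatt marginals. However, Step~2 as written is wrong, because you misidentify the drift.

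Put $A(s,y):=a(s,\vert y-x\vert)=m^{p-1}\vert\partial_y w^x\vert^{p-2}(w^x)^{(m-1)(p-1)}$. By \eqref{Eq:OriginalMCV}, $\rmd[X]_t=2A(t,X_t)\d t$ and the drift is $\partial_y A$, not $(w^x)^{-1}\partial_y[A\,\partial_y w^x]$. With your drift, $\partial_t w=\partial_y^2(Aw)-\partial_y(bw)$ is not \eqref{Eq:Leibenson}. Consequently $b\,w^x=w^x\,\partial_y A$ is not the derivative of the $p$-flux, and the Tanaka drift term $\int_{\R}\sgn(y-o)\,w^x\,\partial_y A\d y$ is a genuine integral, not the point term $2(A\,\partial_y w^x)(s,o)$ you claim.

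The correct bookkeeping is as follows. Write \eqref{Eq:Leibenson} as $\partial_t w^x=\partial_y^2(Aw^x)-\partial_y(w^x\partial_y A)$ and test it against $\vert y-o\vert$. This gives $2(Aw^x)(s,o)$ plus exactly that integral, which cancels \emph{as a whole} against the Tanaka drift term.

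To make this rigorous without using the PDE, fix $o$ and set $f_\varepsilon:=\sqrt{(\cdot-o)^2+\varepsilon^2}$. Take expectations in Itô's formula for $f_\varepsilon(X_t)$ and in Tanaka's formula; the stochastic integrals are $L^2$-martingales by \eqref{Eq:Integrability}. This yields
\begin{align*}
\EEE^x\big[f_\varepsilon(X_t)\big]-f_\varepsilon(x)&=\int_{[0,t]}\int_{\R}\big[f_\varepsilon'\,\partial_y A+f_\varepsilon''\,A\big]\,w^x\d y\d s,\\
\EEE^x\vert X_t-o\vert-\vert x-o\vert&=\int_{[0,t]}\int_{\R}\sgn(y-o)\,\partial_y A\,w^x\d y\d s+\EEE^x\big[L_t^o\big].
\end{align*}
Now let $\varepsilon\to 0^+$:
\begin{itemize}
\item The drift terms converge to each other by \eqref{Eq:Integrability} and dominated convergence.
\item The function $f_\varepsilon''\geq0$ has total mass $2$ and concentrates at $o$. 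Hence $\int_{\R}f_\varepsilon''\,A\,w^x\d y\to 2(Aw^x)(s,o)$ for each $s>0$, by continuity of $(Aw^x)(s,\cdot)$.
\item The domination in $s$ is $\sup_y(Aw^x)(s,y)\lesssim s^{-1+1/\beta}$, from \cref{Le:aU-bounds-m}.
\end{itemize}
This gives the formula for every $x$, including $x=o$, where $a(s,0)=0$ because $p>2$.

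There are also two smaller errors.
\begin{itemize}
\item The martingale property follows from \eqref{Eq:Integrability}, not from boundedness, which fails. Indeed $\sup_y A(s,y)\asymp s^{-1+2/\beta}$ blows up as $s\to0^+$ (here $\beta>2$), and $\vert\partial_y w^x\vert$ blows up at the free boundary when $m(p-1)>p$.
\item For the same reason, your final paragraph should rely on continuity of the product $Aw^x$, not of $\nabla w^x$; the latter fails at the instant the free boundary crosses $o$.
\end{itemize}
Your concern about $p<2$ is void, since \eqref{Eq:ranges} with $d=1$ forces $p>2$.

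The paper argues differently. It applies the occupation time formula $\int_{\R}\varphi(o)\,L_t^o\d o=2\int_{[0,t]}\varphi(X_s)\,a(s,\vert X_s-x\vert)\d s$, takes expectations, and uses Fubini with the marginal law to identify $\EEE^x[L_t^o]$ for a.e.\ $o$. Continuity of $o\mapsto\EEE^x[L_t^o]$ (from joint continuity of the local time) and of $G$ (\cref{Le:CtyG}) then gives every $o$, and the case $x=o$ follows from $a(s,0)=0$.

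Your corrected argument instead works at a fixed $o$; it is essentially \cref{Th:Modif}~\ref{La:v} taken in expectation. It avoids the upgrade from a.e.\ $o$ to every $o$, whose continuity-of-expectation step implicitly needs a domination argument, at the price of an $\varepsilon$-limit. Both routes rest on the same analytic input: continuity and integrable domination of $(Aw^x)(s,\cdot)$, as in \cref{Le:CtyG}.
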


The right-hand side admits precise asymptotics, cf.~\cref{Cor:ExpLocalTime}. 

Heuristically, \eqref{Eq:IntroOccId} shows the local time of the radial process weighs the occupation of $o$ by $X$ by a diffusivity that behaves like $\smash{r^{(p-2)/(p-1)}}$, cf.~\cref{Le:aU-bounds-m}. The exponent is positive since \eqref{Eq:ranges} forces $p>2$ in dimension one. In particular, formally sending $x\to o$ deduces the second case from \eqref{Eq:IntroExpectation} from the first. In general, the identity $\smash{\EEE^x[L_t^o]=0}$ in the deterministic regime $\smash{t \in [0,R^{-1}(\vert o-x\vert))}$ implied by \eqref{Eq:IntroExpectation} mirrors the finite speed of propagation: before the support of $\smash{w^x(t,\cdot)}$ reaches $o$, the process cannot have hit $o$.

\subsection{Estimates for the radial process}\label{Sub:ApplIntro} We also obtain exit time and scaling estimates in \cref{Sub:Exittime}. Let $\tau_R$ be the first exit time of $X$ from $B_R(o)$, where $R>0$. In \cref{Pr:DetExit,Th:Exittime}  we prove the deterministic lower bound $\smash{\tau_R\geq \rho^{-\beta}(R-\vert x-o\vert)^\beta}$ $\smash{\PPP^x}$-a.s.~and the complementary survival bound $\smash{\PPP^o[\tau_R>t]\lesssim \min\{1, R^d\,t^{-d/\beta}\}}$ for every $t>0$. This expresses the finite propagation speed without any counterpart for Euclidean Brownian motion. Together these results pin down a natural exit time scale for $\tau_R$, which we quantify by the moment asymptotics of \cref{Th:Expexittime}. These results rest mainly on the explicit Barenblatt radius \eqref{Eq:Rrho}. We then turn to the self-similarly rescaled radial process $Y$ defined by $\smash{Y_t=\rO(X_t)/t^{1/\beta}}$. Its law under $\smash{\PPP^o}$ turns out not to depend on the time $t>0$, cf.~\cref{Pr:LawResc}. Here our  semimartingale formulas enter: bounds in expectation\footnote{The degeneracy of the diffusivity at the center rules out deterministic ones.} for the coefficients of our Tanaka--Meyer formula from \cref{Le:Coeffq} yield a stopped moment estimate, cf.~\cref{Th:Stopped}, and in turn an exit estimate over moving self-similar boundaries, cf.~\cref{Cor:MovBoundary}, controlling excursions of $Y$ over levels below the Barenblatt boundary.

Finally, in \cref{Sub:Appendix} we investigate the \emph{Barenblatt flow} $\sfh_\bullet$, which is the flow of marginal laws of $p$-Brownian motion, and its (linear) dual $\sfp_\bullet$ acting on functions. We show for every $t\in\R_+$, $\sfh_t$ acts isometrically on Dirac masses in \emph{every} Wasserstein distance $W_q$, $q\in[1,\infty]$, cf. \cref{Pr:WasserIso}, and deduce a sharp gradient estimate in \cref{Th:DualBarFlo}. The proofs deliberately proceed by coupling and by Kuwada's duality argument \cite{kuwada2010} rather than by explicit computation, anticipating their generalization beyond the Euclidean setting.

\subsection{Outlook}\label{Sub:Outlook} Inspired by the classical linear theory recalled in \cref{Sub:ClassBM}, it would be interesting to extend the construction of $p$-Brownian motion of Barbu--Rehmeier--Röckner \cite{barbu-rehmeier-rockner2026} --- and, likewise, of the Leibenson process of Barbu--Grube--Rehmeier--Röckner \cite{barbu-grube-rehmeier-rockner2025+} --- to Riemannian manifolds, and to explore the connections of the resulting processes to curvature. In the remainder of this subsection, we indicate why such a theory seems to us to be within reach; we hope the present work provides a useful point of departure for it.

Many deep connections of the $p$-Laplacian and geometry are known. Analytic properties of the Leibenson equation \eqref{Eq:Leibenson} on Riemannian manifolds have recently been studied e.g.~by Sürig \cite{surig2024+,surig2025+-gradient,surig2026+-existence,surig2026+-long} and Grigor'yan--Sürig \cite{grigoryan-surig2024,grigoryan-surig2025} with Sun \cite{grigoryan-sun-surig2026+}. A nonlinear Bochner formula and spectral gap estimates are established by Matei \cite{matei2000}, Valtorta \cite{valtorta2012}, and Naber--Valtorta \cite{naber-valtorta2014}. Ohta \cite{ohta2014} has shown a general comparison theorem. Moreover, Calabi's classical comparison theorem for the Laplace--Beltrami operator directly transfers to the $p$-Laplacian without any change, since the gradient of a distance function $\rO$ has magnitude one (and thus $\Delta_p\rO = \Delta \rO$) a.e.

A probabilistic counterpart to these efforts appears natural, and our main results suggest what its starting point could be. Our Tanaka--Meyer formulas have a radial structure we frequently exploit, cf.~\cref{Sub:RadialRepSec}: for instance, the drift from \eqref{Eq:Blabla} reads
\begin{align*}
    a(t,\rO(X_t))\,\frac{d-1}{\rO(X_t)}\d t + b(t,\rO(X_t))\d t,
\end{align*}
where $\smash{a\colon (0,\infty)\times\R_+\to\R_+}$ and $\smash{b\colon (0,\infty)\times\R_+\to\R}$ are easily seen to satisfy
\begin{align}
    a(t,\rO(X_t)) &= m^{p-1}\,\big\vert\nabla w^o(t,X_t)\big\vert^{p-2}\,w^o(t,X_t)^{(m-1)(p-1)},\\
    b(t,\rO(X_t)) &= m^{p-1}\, \nabla\big[\big\vert\nabla w^o(t,X_t)\big\vert^{p-2}\,w^o(t,X_t)^{(m-1)(p-1)}\big] \cdot \nabla \rO(X_t)\quad\PPP^o\textnormal{-a.s.};
\end{align}
in particular, $b$ is the radial derivative of $a$, cf.~\eqref{Eq:a_rdef}. We observe the drift contains the \emph{linear} Euclidean Laplacian $\smash{\Delta\rO}$ of the distance function $\rO$ --- making Ricci curvature enter Kendall's decomposition \eqref{Eq:KendallBM} by \eqref{Eq:CalabiComp} --- multiplied by a \emph{non\-negative} diffusivity. This nonnegativity clearly persists on a more geometric background for any reasonable replacement of the Baren\-blatt solution. Our centered radial process forms the nonlinear analog of the Bessel-type process dominating \eqref{Eq:KendallBM} under \eqref{Eq:CalabiComp}. In turn, in a Riemannian framework our martingale methods from \cref{Sub:Exittime} could shed new light on the parabolic $p$-Laplace equation or the Leibenson equation, whose solutions are typically not explicit --- unlike the Euclidean Barenblatt solutions.

This is a conceptual connection of our results with structural feature \ref{La:222222222222222222222222222222222222} of \eqref{Eq:KendallBM} from \cref{Sub:ClassBM}. The next remark comments on structural feature \ref{La:11111}

\begin{remark}[About Kendall's local time at the cut locus] The local times in \eqref{Eq:KendallBM} and our work have \emph{opposite} signs: the former enters with a negative, the latter with a positive sign. These objects simply describe something different. The Tanaka--Meyer formula picks up the singular part of the distributional generator of $\rO$, together with its sign. Kendall's local time lives on the cut locus, where $\rO$ exhibits a \emph{concave} kink, whence the negative sign. Our local time lives at the center $o$, where $\rO$ exhibits a \emph{convex} kink, whence the positive sign. The same happens for classical Brownian motion, cf. \cref{Re:BMLocal}.
\end{remark}

Of course, in our work we do not have to deal with cut loci. We expect them to enter Riemannian generalizations of our Tanaka--Meyer formulas, similarly to \eqref{Eq:KendallBM}.

Further geometric applications suggest themselves, such as nonlinear characterizations of Ricci curvature in the spirit of von Renesse--Sturm \cites{von-renesse-sturm2005}.

In the classical theory of Dirichlet forms, a local time is well-known to arise from the positive continuous additive functional induced by a smooth measure on its ``support'' by Revuz correspondence, cf.~e.g.~Chen--Fuku\-shima \cite{chen-fukushima2012}. It would also be interesting to study our local times abstractly (by developing nonlinear analogs of the aforementioned terms) from the perspective of \emph{nonlinear Dirichlet functionals}. The latter have recently attracted increasing interest, cf.~e.g.~Brigati--Dello Schiavo \cite{brigati-dello-schiavo2025+} and its references.

\section{Preliminaries}
Throughout, we fix $d\in\N$, $p\in (1,\infty)$, and $m > 0$, and assume
\begin{align}\label{Eq:ranges}
\begin{split}
    m(p-1)&>1,\\
dp &>d+1.
\end{split}
\end{align}
Among other things, this implies the existence of suitable solutions to the McKean--Vlasov SDEs we consider in this paper, cf.~\cref{Th:ExistencePWS}. On the other hand, the first condition from \eqref{Eq:ranges} excludes Euclidean Brownian motion (the borderline case $m=1$ and $p=2$). 

Let $\cdot$ and $:$ denote the standard scalar products in $\smash{\R^d}$ and $\smash{\R^{d\times d}}$, respectively. 

Given two scalar expressions $A$ and $B$, we will write $A\lesssim B$ if there is a constant $c>0$ that only depends on the three parameters $p$, $m$, and $d$ (that are fixed throughout the paper) with $A\leq c\,B$. If we allow $c$ to also depend on a specified parameter $x$, we write $A\lesssim_x B$. Lastly, we will write $A\asymp B$ if there exists a constant $c>0$ depending only on $p$, $m$, and $d$ such that $A/c \leq B \leq c\,A$.
\subsection{Barenblatt solution} Let $\omega_{d-1}$ denote the area of the sphere $\S^{d-1}$ in $\R^d$, with the  interpretation $\omega_{0}= 2$. We define
\begin{align}\label{Eq:betagammakappa}
\begin{split}
    \beta &:= p+d(m(p-1)-1),\\
    \gamma &:= \frac{p-1}{m(p-1)-1},\\
    \kappa &:=\frac{m(p-1)-1}{mp}\,\beta^{-1/(p-1)}
\end{split}
\end{align}
and note that $\beta>p$ and $\gamma,\kappa>0$ under our hypothesis $m(p-1)>1$. Define the function $S\colon (0,\infty)\times \R_+\to \R$ by
\begin{align}\label{Eq:f}
    S(t,r):= C-\kappa\big[t^{-1/\beta}\,r\big]^{p/(p-1)},
\end{align}
and the induced Barenblatt profile $U\colon (0,\infty)\times\R_+\to\R_+$ by
\begin{align}\label{Eq:U}
U(t,r):=t^{-d/\beta}\,S(t,r)_+^\gamma,
\end{align}
where $C>0$ is unique (depending only on $d$, $p$, and $m$) such that for every $t>0$,
\begin{align}\label{Eq:Normal}
    \omega_{d-1}\int_{\R_+} U(t,r)\,r^{d-1}\d r=1.
\end{align}
Given $t>0$, the support of $U(t,\cdot)$ is precisely $[0,R(t)]$, where
\begin{align}\label{Eq:Rdef}
    R(t):=\Big[\frac{C}{\kappa}\Big]^{1-1/p}t^{1/\beta}.
\end{align}
The inverse of $R\colon\R_+\to\R_+$ is given by
\begin{align}\label{Eq:R-1def}
    R^{-1}(s) :=\Big[\frac{\kappa}{C}\Big]^{\beta(1-1/p)}s^\beta.
\end{align}

\begin{definition}[Barenblatt solution]\label{Def:Barenblatt} Given $x\in\R^d$, the \emph{Barenblatt solution} $w^x\colon (0,\infty)\times\R^d\to \R_+$ to the Leibenson equation \eqref{Eq:Leibenson} is defined by
\begin{align*}
    w^x(t,y) := U(t,\vert y-x\vert).
\end{align*}
\end{definition}
The normalization property \eqref{Eq:Normal} ensures for every $t>0$ and every $\smash{x\in\R^d}$, $w^x$ is a probability density with respect to $\smash{\Leb^d}$. Furthermore, it is a fundamental solution to the Leibenson equation \eqref{Eq:Leibenson}, in that it is a weak solution obeying $w^x(t,\cdot)\to \delta_x$ distributionally as $\smash{t\to 0^+}$. On the other hand, it is \emph{not} a fundamental solution in the sense of being an integral kernel of solutions with general initial data, because \eqref{Eq:Leibenson} is nonlinear. 
By construction, given $t>0$ and $\smash{x\in\R^d}$ the support of $w^x$ is $\smash{\overline{B}_{R(t)}(x)}$, where $R$ is from \eqref{Eq:Rdef}.
\subsection{Associated McKean--Vlasov SDE} Barbu--Rehmeier--Röckner \cite{barbu-rehmeier-rockner2026} have identified a McKean--Vlasov SDE to govern the Lagrangian dynamics of the parabolic $p$-Laplace equation \eqref{Eq:pLaplaceequation}. This was later extended to the more general Leibenson equation \eqref{Eq:Leibenson} by Barbu--Grube--Rehmeier--Röckner \cite{barbu-grube-rehmeier-rockner2025+}: fixing an initial point $\smash{x\in\R^d}$ and considering the Barenblatt solution $w^x$ from \cref{Def:Barenblatt}, their proposed McKean--Vlasov SDE reads
\begin{align}\label{Eq:OriginalMCV}
\begin{split}
    \rmd X_t &= \sqrt{2m^{p-1}}\,\big\vert\nabla w^x(t,X_t)\big\vert^{(p-2)/2}\,w^x(t,X_t)^{(m-1)(p-1)/2}\d W_t\\
    &\qquad\qquad + m^{p-1}\,\nabla\big[\big\vert \nabla w^x(t,X_t)\big\vert^{p-2}\,w^x(t,X_t)^{(m-1)(p-1)}\big]\d t,\\
    \Law\, X_t &= w^x(t,\cdot)\,\Leb^d,
\end{split}
\end{align}
where $W$ is a $d$-dimensional standard Brownian motion on a stochastic basis, where the latter is a quadruple $(\Omega,\scrF,\scrF_\bullet,\PPP^x)$, where $(\Omega,\scrF,\PPP^x)$ is a complete probability space with a right-continuous filtration $\scrF_\bullet$ augmented by $\PPP^x$-null sets; we write $\PPP^x$ instead of $\PPP$ to stress the dependence on the starting point, cf.~\eqref{Eq:Law0}. Barbu--Grube--Rehmeier--Röckner \cite{barbu-grube-rehmeier-rockner2025+}*{Def.~3.1} have defined a notion of solution to \eqref{Eq:OriginalMCV} as follows.
\begin{definition}[Probabilistically weak solution to \eqref{Eq:OriginalMCV}]\label{Def:PWS} Given $\smash{x\in\R^d}$, a \emph{probabilistically weak solution} to \eqref{Eq:OriginalMCV} is an $\scrF_\bullet$-adapted continuous stochastic process $X$ on a stochastic basis $(\Omega,\scrF,\scrF_\bullet,\PPP^x)$ carrying an $\scrF_\bullet$-standard Brownian motion $W$ on $\smash{\R^d}$ such that
\begin{enumerate}[label=\textnormal{\alph*.}]
    \item for every $t>0$,
    \begin{align*}
        \EEE^x\Big[\!\int_{[0,t]}\big\vert\nabla w^x(s,X_s)\big\vert^{p-2}\,w^x(s,X_s)^{(m-1)(p-1)}\d s\Big] &< \infty,\\
        \EEE^x\Big[\!\int_{[0,t]}\big\vert\nabla\big[\big\vert\nabla w^x(s,X_s)\big\vert^{p-2}\,w^x(s,X_s)^{(m-1)(p-1)}\big]\big\vert\d s\Big] &< \infty,
    \end{align*}
    \item $\PPP^x$-a.s., every $t\in\R_+$ satisfies
    \begin{align*}
        X_t&=X_0+\sqrt{2m^{p-1}}\int_{[0,t]} \big\vert \nabla w^x(s,X_s)\big\vert^{(p-2)/2}\,w^x(s,X_s)^{(m-1)(p-1)/2}\d W_s\\
        &\qquad\qquad + m^{p-1}\int_{[0,t]}\nabla\big[\big\vert\nabla w^x(s,X_s)\big\vert^{p-2}\,w^x(s,X_s)^{(m-1)(p-1)}\big]\d s,
    \end{align*}
    \item for every $t>0$,
    \begin{align*}
    \Law\, X_t=w^x(t,\cdot)\,\Leb^d.
    \end{align*}
    \end{enumerate}
\end{definition}
We will always fix the stochastic basis $(\Omega,\scrF,\scrF_\bullet,\PPP^x)$ and the Brownian motion $W$ and briefly call $X$ a probabilistically weak solution to \eqref{Eq:OriginalMCV}. \cref{Def:PWS} implies
\begin{align}\label{Eq:Law0}
    \Law\,X_0=\delta_x.
\end{align}
We thus occasionally say that the above process starts at $x$. 

A general criterion for existence has been established by Barbu--Rehmeier--Röckner \cite{barbu-rehmeier-rockner2026}*{Thm.~4.2, Cor.~4.3} for the parabolic $p$-Laplace PDE \eqref{Eq:pLaplaceequation} as well as by Barbu--Grube--Rehmeier--Röckner \cite{barbu-grube-rehmeier-rockner2025+}*{Thm.~4.4, Cor.~4.5} for the Leibenson equation \eqref{Eq:Leibenson}.
\begin{theorem}[Existence of probabilistically weak solutions to \eqref{Eq:OriginalMCV}]\label{Th:ExistencePWS} Our standing assumptions \eqref{Eq:ranges} imply that for every $\smash{x\in\R^d}$, there is a probabilistically weak solution $X$ to the McKean--Vlasov SDE \eqref{Eq:OriginalMCV}.
\end{theorem}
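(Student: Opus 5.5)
This result is taken from Barbu--Rehmeier--Röckner \cite{barbu-rehmeier-rockner2026}*{Thm.~4.2, Cor.~4.3} and Barbu--Grube--Rehmeier--Röckner \cite{barbu-grube-rehmeier-rockner2025+}*{Thm.~4.4, Cor.~4.5}. I would prove it by recasting \eqref{Eq:Leibenson} as a linear Fokker--Planck equation along the fixed curve $t\mapsto w^x(t,\cdot)$ and then applying the superposition principle (Figalli, Trevisan).

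\emph{Step 1: Fokker--Planck form.} Set $A(t,y):=m^{p-1}\vert\nabla w^x(t,y)\vert^{p-2}\,w^x(t,y)^{(m-1)(p-1)}$. Since $\nabla (w^m)=m\,w^{m-1}\nabla w$, we have $\Delta_p (w^x)^m=\div[A\,\nabla w^x]$. Moreover
\begin{align*}
\div[A\,\nabla w^x]=\Delta(A\,w^x)-\div(w^x\,\nabla A),
\end{align*}
so $w^x$ solves, in the distributional sense, the linear equation $\partial_t u=\Delta(Au)-\div(u\,\nabla A)$. Its diffusion matrix is $A\,\mathrm{Id}=\tfrac12\sigma\sigma^\top$ with $\sigma=\sqrt{2A}\,\mathrm{Id}$, and its drift is $\nabla A$; these are precisely the coefficients of \eqref{Eq:OriginalMCV}. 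I would also check that $w^x(t,\cdot)\,\Leb^d\to\delta_x$ narrowly as $t\to0^+$. This holds because the support $\overline B_{R(t)}(x)$ shrinks to $\{x\}$ while the total mass is $1$ by \eqref{Eq:Normal}.

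\emph{Step 2: Integrability.} The superposition principle requires
\begin{align*}
\int_{[0,t]}\!\int_{\R^d}\big(A+\vert\nabla A\vert\big)\,w^x\d y\d s<\infty
\end{align*}
for every $t>0$, which is also condition a.~of \cref{Def:PWS}. I would verify this from the explicit profile \eqref{Eq:U} in the three delicate regimes.

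Near the center, $\vert\nabla w^x\vert\asymp r^{1/(p-1)}$ at fixed $s$, so $A\asymp r^{(p-2)/(p-1)}$ and $\vert\nabla A\vert\lesssim r^{-1/(p-1)}$. Integrating against $r^{d-1}\d r$ converges iff $d-1/(p-1)>0$, that is $dp>d+1$; this is exactly the second condition of \eqref{Eq:ranges}.

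Near the Barenblatt boundary, $w^x\asymp S^\gamma$ and $\vert\nabla w^x\vert\asymp S^{\gamma-1}$. Hence $A\,w^x$ and $\vert\nabla A\vert\,w^x$ behave like powers of $S$, and the relation $\gamma=(p-1)/(m(p-1)-1)$ together with $m(p-1)>1$ makes these exponents $>-1$.

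In time, self-similarity gives
\begin{align*}
\int_{\R^d}\big(A+\vert\nabla A\vert\big)\,w^x(s,\cdot)\d y\lesssim s^{-\theta}
\end{align*}
for an explicit exponent $\theta$. I would check that $\theta<1$ using $\beta>p$, so that the bound is integrable at $s=0$.

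\emph{Step 3: Superposition.} Trevisan's superposition principle yields a probability measure on path space solving the martingale problem for the generator $A\Delta+\nabla A\cdot\nabla$, with one-dimensional marginals $w^x(t,\cdot)\,\Leb^d$. The standard representation theorem for martingale problems then produces a stochastic basis, a Brownian motion $W$, and a continuous adapted process $X$ satisfying b.~and c.~of \cref{Def:PWS}, with $X_0=x$. Condition a.~follows from Step 2 because $\Law\,X_s=w^x(s,\cdot)\,\Leb^d$.

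The main obstacle is Step 2. The coefficients degenerate or blow up at the center, at the free boundary $\vert y-x\vert=R(s)$, and as $s\to0^+$, and the exponents must be tracked carefully in each regime; this is where both inequalities of \eqref{Eq:ranges} are used. For the bookkeeping I would first handle the free boundary using $\vert\partial_r S\vert\asymp s^{-p/(\beta(p-1))}\,r^{1/(p-1)}$, which is bounded away from zero there.
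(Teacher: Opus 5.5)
Your proposal is correct and takes essentially the same route as the works the paper cites for this result: rewrite the Leibenson equation as a linear Fokker--Planck equation along the fixed curve $t\mapsto w^x(t,\cdot)$, check the integrability of $A+\vert\nabla A\vert$ against $w^x$, and lift to a martingale solution via the superposition principle. The paper itself gives no proof beyond the citations of Barbu--Rehmeier--Röckner and Barbu--Grube--Rehmeier--Röckner, and the integrability computation you outline is the one it later reproduces in \cref{Le:Coeffq}; that computation gives $\theta=1-1/\beta$, so only $\beta>0$ (not $\beta>p$) is needed.
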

As shown by Barbu--Rehmeier--Röckner \cite{barbu-rehmeier-rockner2026}*{Thm.~3.3} and Barbu--Grube--Rehmeier--Röckner \cite{barbu-grube-rehmeier-rockner2025+}*{Prop.~3.2, Cor.~3.4}, through the marginal densities every probabilistically weak solution to \eqref{Eq:OriginalMCV} corresponds to a weak solution of \eqref{Eq:Leibenson}, which rewrites as the following nonlinear Fokker--Planck equation:
\begin{align*}
    \frac{\partial}{\partial t}w^x &= m^{p-1}\,\Delta\big[\big\vert\nabla w^x\big\vert^{p-2}\,(w^x)^{(m-1)(p-1)}\,w^x\big]\\
    &\qquad\qquad - m^{p-1}\div\!\big[\nabla\big[\big\vert \nabla w^x\big\vert^{p-2}\,(w^x)^{(m-1)(p-1)}\big]\,w^x\big]\quad\textnormal{on }(0,\infty)\times\R^d.
\end{align*}
For \cref{Sec:RadialTanakaMeyer}, it will be convenient to rewrite this again as a \emph{quasilinear} parabolic PDE by
\begin{align*}
    \frac{\partial}{\partial t}w^x = \sfL_{p,m}^xw^x\quad\textnormal{on }(0,\infty)\times\R^d,
\end{align*}
where for a function $\smash{u\colon (0,\infty)\times\R^d\to\R}$, we set
\begin{align}\label{Eq:Generator}
\begin{split}
    \sfL_{p,m}^xu&:=m^{p-1}\,\Delta\big[\big\vert\nabla w^x\big\vert^{p-2}\,(w^x)^{(m-1)(p-1)}\,u\big]\\
    &\qquad\qquad - m^{p-1}\div\!\big[\nabla\big[\big\vert \nabla w^x\big\vert^{p-2}\,(w^x)^{(m-1)(p-1)}\big]\,u\big]
    \end{split}
\end{align}
whenever the right-hand side makes sense.
Two further central results of \cite{barbu-rehmeier-rockner2026,barbu-grube-rehmeier-rockner2025+} are that under stronger hypotheses on $d$, $p$, and $m$,
\begin{itemize}
    \item the solution laws of the probabilistically weak solutions to \eqref{Eq:OriginalMCV} constitute a time-homogeneous nonlinear Markov process in the sense of McKean \cite{mckean1966} as studied by Rehmeier--Röckner \cite{rehmeier-rockner2022+}, and
    \item the probabilistically weak solutions are in fact probabilistically strong, i.e.~measurable functions of the driving Brownian motion and the initial condition; moreover, pathwise uniqueness holds for the McKean--Vlasov SDE \eqref{Eq:OriginalMCV}.
\end{itemize}
We refer to Barbu--Rehmeier--Röckner \cite{barbu-rehmeier-rockner2026}*{§§5--6} and Barbu--Grube--Rehmeier--Röckner \cite{barbu-grube-rehmeier-rockner2025+}*{§§5--6} for details. We will not use these properties in our work.

\subsection{Radial representation of the associated McKean--Vlasov SDE}\label{Sub:RadialRepSec} It will be convenient to employ the radial structure of the coefficients of \eqref{Eq:OriginalMCV} and rewrite this McKean--Vlasov SDE in terms of the Baren\-blatt profile $U$ from \eqref{Eq:U}. Recall the function $R$ from \eqref{Eq:Rdef} and define $\smash{a\colon (0,\infty)\times\R_+\to\R_+}$ by
\begin{align}\label{Eq:a}
    a(t,r) &:= \begin{cases} m^{p-1}\,\big\vert  U_r(t,r)\big\vert^{p-2}\,U(t,r)^{(m-1)(p-1)} & \textnormal{if }r<R(t),\\
    0 & \textnormal{otherwise}
    \end{cases}
\end{align}
where we abbreviate, given $t>0$ and $r\in (0,R(t))$,
\begin{align}\label{Eq:a_rdef}
\begin{split}
U_r(t,r) &:= \frac{\partial}{\partial r}U(t,r),\\
    a_r(t,r) &:=\frac{\partial}{\partial r}a(t,r).
    \end{split}
\end{align}
Given $\smash{z\in\R^d}$, we also define
\begin{align}\label{Eq:sgn}
    \sgn z := \begin{cases}
        \displaystyle\frac{z}{\vert z\vert} & \textnormal{if }z\neq 0,\\
        0 & \textnormal{otherwise}.
    \end{cases}
\end{align}
Finally, consider the McKean--Vlasov SDE
\begin{align}\label{Eq:ClosedMcKeanVlasov}
\begin{split}
    \rmd X_t &=\sqrt{2a(t,\vert X_t-x\vert)}\d W_t +  a_r(t,\vert X_t-x\vert)\sgn(X_t-x)\d t,\\
    \Law\,X_t& = U(t, \vert\cdot- \,x\vert)\, \Leb^d.
\end{split}
\end{align}

\begin{definition}[Probabilistically weak solution to \eqref{Eq:ClosedMcKeanVlasov}] Given $\smash{x\in\R^d}$, a  \emph{probabilistically weak solution} to \eqref{Eq:ClosedMcKeanVlasov} is an $\scrF_\bullet$-adapted continuous stochastic process $X$ on a stochastic basis $(\Omega,\scrF,\scrF_\bullet,\PPP^x)$ carrying an $\scrF_\bullet$-standard Brownian motion $W$ on $\smash{\R^d}$ such that
\begin{enumerate}[label=\textnormal{\alph*.}]
    \item for every $t>0$,
    \begin{align}\label{Eq:Integrability}
\begin{split}
    \EEE^{x}\Big[\!\int_{[0,t]} a(s,\vert X_s-x\vert)\d s\Big]&<\infty,\\
    \EEE^{x}\Big[\!\int_{[0,t]}\big\vert a_r(s,\vert X_s-x\vert)\big\vert\d s\Big]&<\infty,
    \end{split}
\end{align}
\item $\PPP^x$-a.s., every $t\in\R_+$ satisfies
\begin{align*}
    X_t = X_0 + \sqrt{2}\int_{[0,t]}\sqrt{a(s,\vert X_s-x\vert)}\d W_s + \int_{[0,t]} a_r(s,\vert X_s-x\vert)\sgn(X_s-x)\d s,
\end{align*}
\item for every $t>0$,
\begin{align*}
    \Law\,X_t = U(t,\vert \cdot-\,x\vert)\d\Leb^d.
\end{align*}
\end{enumerate}
\end{definition}

By the chain rule, the following is clear.

\begin{proposition}[Identification of probabilistically weak solutions to \eqref{Eq:OriginalMCV} and \eqref{Eq:ClosedMcKeanVlasov}] The probabilistically weak solutions to \eqref{Eq:OriginalMCV} and \eqref{Eq:ClosedMcKeanVlasov} coincide.
\end{proposition}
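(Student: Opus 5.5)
The plan is to check that the coefficients of \eqref{Eq:OriginalMCV} and \eqref{Eq:ClosedMcKeanVlasov} agree pointwise on $(0,\infty)\times\R^d$, up to a Lebesgue-null set of spatial points. Once this holds, the integrability conditions, the integral equations and the marginal law conditions in the two definitions of probabilistically weak solution coincide term by term. The marginal laws are literally the same, since $w^x(t,\cdot)=U(t,\vert\cdot-x\vert)$ by \cref{Def:Barenblatt}.

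\emph{Step 1: diffusion coefficient.} Fix $t>0$ and $y\neq x$ with $\vert y-x\vert<R(t)$. The chain rule gives
\begin{align*}
\nabla w^x(t,y)=U_r(t,\vert y-x\vert)\,\sgn(y-x),
\qquad\text{hence}\qquad
\vert\nabla w^x(t,y)\vert=\vert U_r(t,\vert y-x\vert)\vert.
\end{align*}
It follows that
\begin{align*}
m^{p-1}\vert\nabla w^x(t,y)\vert^{p-2}\,w^x(t,y)^{(m-1)(p-1)}=a(t,\vert y-x\vert).
\end{align*}
Outside $\overline B_{R(t)}(x)$ both sides vanish. On the sphere $\vert y-x\vert=R(t)$ the right side is $0$ by convention. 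The left side is also $0$ there, because $U=0$ and $U_r$ stays bounded (as $\gamma>1$ or otherwise), with the convention $0^{p-2}\cdot 0=0$. This sphere is in any case Lebesgue-null.

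\emph{Step 2: drift coefficient.} By Step 1 the drift of \eqref{Eq:OriginalMCV} is $\nabla[a(t,\vert\cdot-x\vert)](y)$. The chain rule turns this into $a_r(t,\vert y-x\vert)\sgn(y-x)$ for $0<\vert y-x\vert<R(t)$, using that $r\mapsto a(t,r)$ is $C^1$ on $(0,R(t))$ by the explicit formulas \eqref{Eq:f} and \eqref{Eq:U}.

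\emph{Step 3: exceptional sets.} The remaining issue is the set where the identities can fail, namely the point $y=x$ and the Barenblatt boundary sphere. Both have Lebesgue measure zero for every $t>0$. By part c of either definition, $X_s$ has a Lebesgue-absolutely continuous law for each $s>0$. Fubini then gives
\begin{align*}
\EEE^x\Big[\int_{[0,t]}1_{N_s}(X_s)\,\rmd s\Big]=0
\end{align*}
for the null sets $N_s$. Consequently the $\rmd s$- and $\rmd\langle W\rangle_s$-integrands of the two SDEs agree $\rmd s\otimes\PPP^x$-a.e. Therefore the stochastic and Lebesgue integrals coincide a.s., and so do the expectations in the integrability conditions.

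I expect this last step to be the only point requiring care: handling the nonsmooth points of $y\mapsto\vert y-x\vert$ and the boundary of the support through the absolute continuity of the marginals. The rest is the routine chain rule.
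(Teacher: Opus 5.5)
Your proposal is correct and follows the paper's route, which is just the chain rule; your Step 3 spells out the null-set bookkeeping at the center and at the Barenblatt sphere, via absolute continuity of the marginals and Fubini, which the paper leaves implicit. One remark in Step 1 is inaccurate, though harmless since Step 3 discards the sphere anyway: $U_r$ need not stay bounded at $\vert y-x\vert=R(t)$, because for $\gamma<1$ (i.e.\ $m>p/(p-1)$) the profile $S_+^\gamma$ has infinite slope there, so $\nabla w^x$ does not exist on that sphere.
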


The radial formulation \eqref{Eq:ClosedMcKeanVlasov} of \eqref{Eq:OriginalMCV} makes the following lemma clear. To state it, we define the translation $\smash{T_v\colon\R^d\to\R^d}$ by a vector $\smash{v\in\R^d}$ by
\begin{align}\label{Eq:Translation}
T_v(x):=x+v.
\end{align}

\begin{lemma}[Translation]\label{Le:Translation} For $x\in\R^d$, let $X$ be a probabilistically weak solution to  \eqref{Eq:ClosedMcKeanVlasov} starting at $x$. Then for every $y\in\R^d$, the process $Y:=X-x+y$ is a probabilistically weak solution to \eqref{Eq:ClosedMcKeanVlasov} starting at $y$; in particular, every $t\in\R_+$ satisfies
\begin{align}\label{Eq:Law}
    \Law\,Y_t = (T_{y-x})_\push\Law\,X_t.
\end{align}
\end{lemma}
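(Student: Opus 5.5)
The plan is to verify the three defining properties of a probabilistically weak solution to \eqref{Eq:ClosedMcKeanVlasov} for $Y:=X-x+y$ directly, on the same stochastic basis $(\Omega,\scrF,\scrF_\bullet,\PPP^x)$ and with the same Brownian motion $W$. Adaptedness and continuity of $Y$ are inherited from $X$, since $Y$ is a deterministic translate of $X$. The structural observation driving everything is the identity
\begin{align*}
Y_s-y = X_s-x\quad\textnormal{for every }s\in\R_+,
\end{align*}
so every coefficient of \eqref{Eq:ClosedMcKeanVlasov} evaluated along $Y$ relative to the new starting point $y$ coincides pathwise with the same coefficient evaluated along $X$ relative to $x$. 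In other words, the coefficients depend only on $\vert\cdot-x\vert$ and $\sgn(\cdot-x)$.

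First, the integrability conditions \eqref{Eq:Integrability} for $Y$ under the measure used for $Y$ are literally those for $X$, because $a(s,\vert Y_s-y\vert)=a(s,\vert X_s-x\vert)$ and $a_r(s,\vert Y_s-y\vert)=a_r(s,\vert X_s-x\vert)$. Here $\PPP^y$ is simply $\PPP^x$ on the same space, the superscript only recording the starting point. Second, I subtract $x$ and add $y$ on both sides of the integral identity for $X$. This gives $\PPP^x$-a.s., for every $t$,
\begin{align*}
Y_t = Y_0 + \sqrt{2}\int_{[0,t]}\sqrt{a(s,\vert Y_s-y\vert)}\d W_s + \int_{[0,t]} a_r(s,\vert Y_s-y\vert)\sgn(Y_s-y)\d s,
\end{align*}
where I have replaced the integrands using the displayed identity. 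Third, for the marginal laws I note $Y_t=T_{y-x}(X_t)$ with $T_v$ as in \eqref{Eq:Translation}, which is exactly \eqref{Eq:Law}. By change of variables under the translation, which preserves $\Leb^d$, the density of $Y_t$ is
\begin{align*}
z\mapsto U(t,\vert z-(y-x)-x\vert)=U(t,\vert z-y\vert).
\end{align*}
This is the required marginal condition, and together with \eqref{Eq:Law0} it yields $\Law\,Y_0=\delta_y$.

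There is no real obstacle; the argument is bookkeeping. The only point deserving care is to insist that the ``McKean--Vlasov'' law constraint is imposed through the fixed Barenblatt profile centered at the starting point, rather than through a genuine feedback of $\Law\,Y_t$ into the coefficients. This is why the marginal identity must be checked separately, via \eqref{Eq:Law} and translation invariance of Lebesgue measure, and cannot simply be read off from the SDE.
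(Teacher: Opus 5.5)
Your proposal is correct and follows the same route as the paper, which gives no written proof and simply notes that the lemma is clear from the radial formulation \eqref{Eq:ClosedMcKeanVlasov}, because all coefficients depend only on $X_t-x$. Your check of the three defining properties via the identity $Y_s-y=X_s-x$, on the same basis with $\PPP^y:=\PPP^x$, is exactly that observation written out.
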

Geometrically, the translation from the previous lemma corresponds to coupling the laws in question by parallel transport, a well-known coupling technique from stochastic differential geometry pioneered by Kendall \cite{kendall1986} and Cranston \cite{cranston1991}.

We conclude this subsection by stating fundamental estimates for the coefficients of \eqref{Eq:ClosedMcKeanVlasov} that we will use frequently.
\begin{lemma}[Fundamental estimates]\label{Le:aU-bounds-m} Consider the functions $a$, $S$, $U$, and $R$ from \eqref{Eq:a}, \eqref{Eq:f}, \eqref{Eq:U}, and \eqref{Eq:Rdef}, respectively.
Then the following statements hold.
\begin{enumerate}[label=\textnormal{\textcolor{black}{(}\roman*\textcolor{black}{)}}]
    \item\label{La:Eins} For every $t>0$ and every $r\in (0,R(t))$,
    \begin{align}
        a(t,r) &\asymp t^{-1+p/(p-1)\beta}\,r^{(p-2)/(p-1)}\,S(t,r),\\
        a(t,r) &\lesssim t^{-1+p/(p-1)\beta}\,r^{(p-2)/(p-1)},\\
        a(t,r)\,U(t,r) &\asymp  t^{-1-d/\beta+p/(p-1)\beta}\,r^{(p-2)/(p-1)}\,S(t,r)^{\gamma+1},\\
        a(t,r)\,U(t,r) &\lesssim t^{-1-d/\beta+p/(p-1)\beta}\,r^{(p-2)/(p-1)}.
    \end{align}
    \item\label{La:Zwei} For every $\lambda\in (0,1)$ there exists a  constant $c>0$ such that for every $t>0$ and every $r\in (0,\lambda R(t))$,
    \begin{align}
        c\, t^{-1+p/(p-1)\beta}\,r^{(p-2)/(p-1)}&\leq a(t,r),\\
        c^{\gamma+1}\,t^{-1-d/\beta+p/(p-1)\beta}\,r^{(p-2)/(p-1)}&\leq a(t,r)\,U(t,r).
    \end{align}
\end{enumerate}
\end{lemma}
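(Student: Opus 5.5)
Everything reduces to an explicit computation with the profile. For $t>0$ and $r\in(0,R(t))$ we have $S(t,r)>0$, and by \eqref{Eq:f} and \eqref{Eq:U}
\begin{align*}
U_r(t,r) = -\gamma\,t^{-d/\beta}\,S(t,r)^{\gamma-1}\,\kappa\,\frac{p}{p-1}\,t^{-p/(p-1)\beta}\,r^{1/(p-1)}.
\end{align*}
Thus $\vert U_r\vert^{p-2}\asymp t^{-(p-2)d/\beta-(p-2)p/(p-1)\beta}\,r^{(p-2)/(p-1)}\,S^{(\gamma-1)(p-2)}$ and $U^{(m-1)(p-1)} = t^{-d(m-1)(p-1)/\beta}\,S^{\gamma(m-1)(p-1)}$. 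Multiplying these gives $a(t,r)$ as a product of three explicit powers.

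The first step is to collect the exponents. For the power of $S$, use $\gamma(m(p-1)-1)=p-1$ to get
\begin{align*}
(\gamma-1)(p-2)+\gamma(m-1)(p-1)=\gamma\bigl(m(p-1)-1\bigr)-(p-2)=1.
\end{align*}
For the power of $t$, we find
\begin{align*}
-\frac{d(p-2)+d(m-1)(p-1)}{\beta}-\frac{(p-2)p}{(p-1)\beta} = -\frac{d(m(p-1)-1)}{\beta}-\frac{p(p-2)}{(p-1)\beta}.
\end{align*}
Since $d(m(p-1)-1)=\beta-p$, this equals $-1+\frac{p}{\beta}-\frac{p(p-2)}{(p-1)\beta} = -1+\frac{p}{(p-1)\beta}$. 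This gives the first line of (i), with constants depending only on $p,m,d$ through $\gamma,\kappa$. I would double-check this exponent arithmetic carefully; it is the only point where an error could slip in.

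The remaining items follow directly. Since $0<S\le C$, the second line of (i) follows from the first. Multiplying by $U=t^{-d/\beta}S^\gamma$ gives the third line, and $S^{\gamma+1}\le C^{\gamma+1}$ gives the fourth. For (ii), take $r<\lambda R(t)$. By \eqref{Eq:Rdef}, $\kappa[t^{-1/\beta}r]^{p/(p-1)}<\lambda^{p/(p-1)}C$, so
\begin{align*}
S(t,r)\ge C\bigl(1-\lambda^{p/(p-1)}\bigr)>0.
\end{align*}
Inserting this lower bound into the two-sided estimates of (i) yields both inequalities of (ii). We choose $c$ as the product of the lower comparison constant of (i) with $C(1-\lambda^{p/(p-1)})$, adjusted where necessary so that the second inequality takes the stated form $c^{\gamma+1}$; this is possible after possibly shrinking $c$, provided $c\le 1$.
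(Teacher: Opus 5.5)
Your proposal is correct and is essentially the paper's proof. You compute $a(t,r)=m^{p-1}[\gamma\kappa p/(p-1)]^{p-2}\,t^{-1+p/(p-1)\beta}\,r^{(p-2)/(p-1)}\,S(t,r)$ explicitly; your exponent bookkeeping via $\gamma(m(p-1)-1)=p-1$ and $d(m(p-1)-1)=\beta-p$ is right. You then use $S\le C$ for the upper bounds and $S\ge C(1-\lambda^{p/(p-1)})$ on $(0,\lambda R(t))$ for (ii), exactly as the paper does.

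Your handling of the form $c^{\gamma+1}$ is valid but simpler than you make it. Both inequalities in (ii) are lower bounds, so any smaller positive $c$ still works. It therefore suffices to take the minimum of the constant admissible for the first inequality and the $1/(\gamma+1)$-th power of the constant admissible for the second.
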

\begin{proof} Given $t>0$ and $r\in (0,R(t))$, by \eqref{Eq:U} we have
\begin{align}\label{Eq:US}
U(t,r)=t^{-d/\beta}\,S(t,r)^\gamma
\end{align}
and $S(t,r)$ is positive. Differentiating this identity in the second argument yields
\begin{align}
    U_r(t,r) = t^{-d/\beta}\,\gamma\,S(t,r)^{\gamma-1}\,S_r(t,r) = -\gamma\kappa\frac{ p}{p-1}\,t^{-d/\beta-p/(p-1)\beta}\,r^{1/(p-1)}\,S(t,r)^{\gamma-1}.
\end{align}
Taking absolute values and raising the result to the power $p-2$,
\begin{align}
    \big\vert U_r(t,r)\big\vert^{p-2} = \Big[\gamma\kappa\frac{p}{p-1}\Big]^{p-2}\,t^{-(p-2)(d/\beta+p/(p-1)\beta)}\,r^{(p-2)/(p-1)}\,S(t,r)^{(p-2)(\gamma-1)}.
\end{align}
On the other hand, by \eqref{Eq:U} we obtain
\begin{align}
    U(t,r)^{(m-1)(p-1)} = t^{-(m-1)(p-1)d/\beta}\,S(t,r)^{(m-1)(p-1)\gamma}.
\end{align}
This yields
\begin{align}
    a(t,r) &= m^{p-1}\,\Big[\gamma\kappa\frac{p}{p-1}\Big]^{p-2}\,t^{-(p-2)(d/\beta+p/(p-1)\beta)}\,t^{-(m-1)(p-1)d/\beta}\\
    &\qquad\qquad \times r^{(p-2)/(p-1)}\,S(t,r)^{(p-2)(\gamma-1)}\,S(t,r)^{(m-1)(p-1)\gamma}\\
    &=m^{p-1}\,\Big[\gamma\kappa\frac{p}{p-1}\Big]^{p-2}\,t^{-1+p/(p-1)\beta}\,r^{(p-2)/(p-1)}\,S(t,r).
\end{align}
This immediately implies the first and third claim from \ref{La:Eins}. The other two follow from the simple observation $S\leq C$ on $(0,\infty)\times \R_+$, recalling $\kappa$ is positive.
To show item \ref{La:Zwei}, by \ref{La:Eins} it suffices to find a suitable lower bound on $S(t,r)$ for every $t>0$ and every $r\in(0,\lambda R(t))$ as given. However, this is straightforward since, when $t$ and $r$ belong to these ranges,
\begin{align}
    S(t,r) &=C-\kappa\big[t^{-1/\beta}\,r\big]^{p/(p-1)} \geq C-\kappa\big[t^{-1/\beta}\,\lambda R(t)\big]^{p/(p-1)}= C\,\big[1-\lambda^{p/(p-1)}\big].
\end{align}
This completes the proof.
\end{proof}

\section{Radial process and Tanaka--Meyer semimartingale decomposition}\label{Sec:RadialTanakaMeyer}
Now we show the radial parts of the processes  in question constitute semi\-martingales by proving explicit Tanaka--Meyer formulas. These involve the induced generator \eqref{Eq:Generator}, notably the coefficient $a$ from \eqref{Eq:a}; we recall moreover the definition \eqref{Eq:sgn} of the vector-valued sign function $\sgn$. We also characterize  non\-triviality of the associated local times. For a comparison of our results to Euclidean Brownian motion, we refer to \cref{Re:BMLocal}. 

Given a starting point $\smash{x\in\R^d}$, let $X$ be a probabilistically weak solution to \eqref{Eq:ClosedMcKeanVlasov}. It will always be modeled on a fixed stochastic basis $\smash{(\Omega,\scrF,\scrF_\bullet,\PPP^x)}$ on which it solves \eqref{Eq:ClosedMcKeanVlasov} and its driving process $W$ is a Brownian motion.
\subsection{Itô formula for approximate radial process} In the sequel, fix a ``center'' $\smash{o\in\R^d}$. Define the function $\rO\colon\R^d\to\R_+$ by
\begin{align}\label{Eq:Distance}
    \rO(y) := \vert y-o\vert.
\end{align}
Given $\varepsilon>0$, we define the approximate distance function  $\rO_\varepsilon\colon\R^d\to\R_+$ by
\begin{align}
    \rO_\varepsilon(y) := \sqrt{\vert y-o\vert^2+\varepsilon^2}.
\end{align}
A direct computation reveals everywhere on $\smash{\R^d}$,
\begin{align}\label{Eq:Derreps}
\begin{split}
    \nabla \rO_\varepsilon &= \frac{\cdot -o}{\rO_\varepsilon},\\
    \Delta \rO_\varepsilon &= \frac{d-1}{\rO_\varepsilon} +\frac{\varepsilon^2}{{\rO_\varepsilon}^3},
\end{split}
\end{align}
identities which continue to hold on $\smash{\R^d\setminus\{o\}}$ when $\varepsilon$ is zero.
\begin{lemma}[Itô formula for approximate radial process]\label{Le:Itoeps1} For every $\varepsilon>0$ and every point $\smash{x\in\R^d}$, the approximate radial process $\rO_\varepsilon\circ X$ obeys
\begin{align}\label{Eq:repsIto}
\begin{split}
    \rmd \rO_\varepsilon(X_t) &= \sqrt{2a(t,\vert X_t-x\vert)}\,\frac{X_t-o}{\rO_\varepsilon(X_t)}\cdot\rmd W_t\\
    &\qquad\qquad + a(t,\vert X_t-x\vert)\,\Big[\frac{d-1}{\rO_\varepsilon(X_t)}+\frac{\varepsilon^2}{\rO_\varepsilon(X_t)^3}\Big]\d t\\
    &\qquad\qquad + a_r(t,\vert X_t-x\vert)\sgn(X_t-x)\cdot\frac{X_t-o}{\rO_\varepsilon(X_t)}\d t\quad\PPP^{x}\textnormal{\textit{-a.s.}}
    \end{split}
\end{align}
\end{lemma}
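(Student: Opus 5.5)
This is a direct application of the multidimensional Itô formula to the smooth function $\rO_\varepsilon$ along the continuous semimartingale $X$. The plan is as follows.

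\textbf{Step 1: $X$ is a continuous semimartingale.} By part b.~of the definition of probabilistically weak solutions to \eqref{Eq:ClosedMcKeanVlasov}, $X$ decomposes as $X_0$, plus the stochastic integral $M_t:=\sqrt2\int_{[0,t]}\sqrt{a(s,\vert X_s-x\vert)}\d W_s$, plus the drift $A_t:=\int_{[0,t]}a_r(s,\vert X_s-x\vert)\sgn(X_s-x)\d s$.

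The integrability conditions \eqref{Eq:Integrability} do the work here. The first gives $\EEE^x\int_{[0,t]}a\,\d s<\infty$, so $M$ is a genuine square-integrable continuous martingale. Its quadratic covariation is
\begin{align*}
\rmd\langle M^i,M^j\rangle_t = 2a(t,\vert X_t-x\vert)\,\delta_{ij}\d t.
\end{align*}
The second condition ensures that $A$ is a continuous process of finite variation, since $\vert\sgn\vert\leq 1$.

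\textbf{Step 2: apply Itô's formula.} Since $\varepsilon>0$, the function $\rO_\varepsilon$ is $C^\infty$ on all of $\R^d$, so no localization near $o$ is needed. Itô's formula gives
\begin{align*}
\rmd \rO_\varepsilon(X_t)=\nabla\rO_\varepsilon(X_t)\cdot\rmd M_t+\nabla\rO_\varepsilon(X_t)\cdot \rmd A_t+\tfrac12\sum_{i,j}\partial_{ij}\rO_\varepsilon(X_t)\d\langle M^i,M^j\rangle_t .
\end{align*}
Because the covariation matrix is diagonal and isotropic, the second-order term collapses to $a(t,\vert X_t-x\vert)\,\Delta\rO_\varepsilon(X_t)\d t$.

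\textbf{Step 3: insert the explicit derivatives.} Substituting \eqref{Eq:Derreps} produces the three lines of \eqref{Eq:repsIto}:
\begin{itemize}
\item the martingale term $\sqrt{2a}\,(X_t-o)/\rO_\varepsilon(X_t)\cdot\rmd W_t$;
\item the curvature-type drift $a\,[(d-1)/\rO_\varepsilon+\varepsilon^2/\rO_\varepsilon^3]\d t$;
\item the transported drift $a_r\sgn(X_t-x)\cdot(X_t-o)/\rO_\varepsilon(X_t)\d t$.
\end{itemize}

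\textbf{Step 4: check that every term is well defined.} This is the only point needing care.
\begin{itemize}
\item The gradient satisfies $\vert\nabla\rO_\varepsilon\vert\leq1$, so the stochastic integrand is dominated by $\sqrt{2a}$. It is therefore square-integrable on $[0,t]$ by the first condition in \eqref{Eq:Integrability}.
\item The Laplacian is bounded, $\Delta\rO_\varepsilon\leq (d-1)/\varepsilon+1/\varepsilon$. The second drift is thus integrable, again by the first condition.
\item The third drift is dominated by $\vert a_r\vert$, which is integrable by the second condition in \eqref{Eq:Integrability}.
\end{itemize}
Note also that $a$ and $a_r$ only need to be evaluated where they are defined. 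On $\{\vert X_s-x\vert\geq R(s)\}$ we have $a=0$, and this set is Lebesgue-null in law by the marginal constraint c. Consequently the values of $a_r$ on the boundary sphere play no role, $\PPP^x\otimes\Leb^1$-a.e.

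The identity therefore holds $\PPP^x$-a.s.~for all $t$ simultaneously, by continuity of both sides.
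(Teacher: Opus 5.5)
Your proposal is correct and follows the paper's proof essentially verbatim. You identify $\rmd[X^i,X^j]_t = 2a(t,\vert X_t-x\vert)\,\delta_{ij}\d t$, apply Itô's formula to the smooth function $\rO_\varepsilon$ so the second-order term reduces to $a\,\Delta\rO_\varepsilon\d t$, insert \eqref{Eq:Derreps}, and use \eqref{Eq:Integrability} together with $\vert\nabla\rO_\varepsilon\vert\leq 1$ and $\Delta\rO_\varepsilon\leq d/\varepsilon$ to justify that all integrals are well defined; your remark that the undefined values of $a_r$ on the null set $\{\vert X_s-x\vert\geq R(s)\}$ are irrelevant is a harmless refinement the paper leaves implicit.
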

\begin{proof} Using \eqref{Eq:Integrability}, it is not hard to show that all Itô integrals appearing above are true martingales and that all Lebesgue integrals do  exist in $\R$.
Thanks to  \eqref{Eq:ClosedMcKeanVlasov}, the quadratic variation of $X$ is the process $\smash{[X,X]}$ of  $d\times d$-matrices whose entries solve
\begin{align}
    \rmd [X^i,X^j]_t &= 2a(t,\vert X_t-x\vert)\d[W^i,W^j]_t =2a(t,\vert X_t-x\vert)\,\delta_{ij}\d t\quad \PPP^{x}\textnormal{-a.s.}
\end{align}
Since $\rO_\varepsilon$ is smooth, Itô's formula combines  with \eqref{Eq:ClosedMcKeanVlasov} to entail
\begin{align}
    \rmd \rO_\varepsilon(X_t) &= \nabla \rO_\varepsilon(X_t)\cdot \rmd X_t + \frac{1}{2}\nabla^2\rO_\varepsilon(X_t):\rmd[X,X]_t\\
    &=\nabla \rO_\varepsilon(X_t)\cdot \rmd X_t + a(t,\vert X_t-x\vert)\,\Delta \rO_\varepsilon(X_t)\d t\\
    &= \sqrt{2a(t,\vert X_t-x\vert)}\,\nabla \rO_\varepsilon(X_t)\cdot \rmd W_t +  a_r(t,\vert X_t-x\vert)\sgn(X_t-x)\cdot\nabla \rO_\varepsilon(X_t)\d t\\
    &\qquad\qquad + a(t,\vert X_t-x\vert)\,\Delta \rO_\varepsilon(X_t)\d t \quad\PPP^{x}\textnormal{-a.s.}
\end{align}
Inserting the formulas from \eqref{Eq:Derreps} yields the claim.
\end{proof}

\begin{corollary}[Itô formula for centered approximate radial process]\label{Cor:Itoeps} For every $\varepsilon>0$, the centered approximate radial process $\rO_\varepsilon\circ X$ satisfies
\begin{align}
    \rmd \rO_\varepsilon(X_t) &= \sqrt{2a(t, \rO(X_t))}\,\frac{X_t-o}{\rO_\varepsilon(X_t)}\cdot\rmd W_t + a(t,\rO(X_t))\,\Big[\frac{d-1}{\rO_\varepsilon(X_t)}+\frac{\varepsilon^2}{\rO_\varepsilon(X_t)^3}\Big]\d t\\
    &\qquad\qquad + a_r(t,\rO(X_t)) \,\frac{\rO(X_t)}{\rO_\varepsilon(X_t)}\d t\quad\PPP^{o}\textnormal{\textit{-a.s.}}
\end{align}
\end{corollary}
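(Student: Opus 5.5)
This is a direct specialization of \cref{Le:Itoeps1} to the case $x=o$, so the plan is to apply that lemma under $\PPP^o$ and simplify each term.

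\emph{Diffusion and curvature terms.} With $x=o$ we have $\vert X_t-x\vert=\vert X_t-o\vert=\rO(X_t)$ by \eqref{Eq:Distance}. Hence the arguments $a(t,\vert X_t-x\vert)$ and $a_r(t,\vert X_t-x\vert)$ in \eqref{Eq:repsIto} become $a(t,\rO(X_t))$ and $a_r(t,\rO(X_t))$. This substitution alone yields the stochastic integral and the term involving $(d-1)/\rO_\varepsilon+\varepsilon^2/\rO_\varepsilon^3$ exactly as stated.

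\emph{Drift term.} By \eqref{Eq:sgn}, if $X_t\neq o$ then
\begin{align*}
\sgn(X_t-o)\cdot\frac{X_t-o}{\rO_\varepsilon(X_t)} = \frac{\vert X_t-o\vert^2}{\vert X_t-o\vert\,\rO_\varepsilon(X_t)} = \frac{\rO(X_t)}{\rO_\varepsilon(X_t)}.
\end{align*}
If $X_t=o$, both sides vanish, because $\sgn 0=0$ and $\rO(o)=0$. The identity therefore holds pointwise everywhere, and no null-set argument is needed for it.

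\emph{Main obstacle.} The only delicate point is that $a_r(t,r)$ was defined in \eqref{Eq:a_rdef} only for $r\in(0,R(t))$. We must check that $a_r(t,\rO(X_t))$ is meaningful $\Leb^1\otimes\PPP^o$-a.e., which is implicit already in \cref{Le:Itoeps1} and \eqref{Eq:Integrability}. For each $t>0$, $\Law\,X_t$ has the density $U(t,\vert\cdot-o\vert)$ with respect to $\Leb^d$. Consequently $\rO(X_t)\in(0,R(t))$ $\PPP^o$-a.s., since the sphere $\{\rO=R(t)\}$ and the point $\{o\}$ are $\Leb^d$-null. Fubini then gives the required a.e.\ statement, so the drift integrals coincide a.s. Finally, the identity of the integrands is pathwise, so the integrated forms of the two Itô formulas agree $\PPP^o$-a.s.
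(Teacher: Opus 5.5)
Your proposal is correct and follows the intended argument: the paper gives no separate proof, treating this corollary as the specialization $x=o$ of \cref{Le:Itoeps1} via $\vert X_t-o\vert=\rO(X_t)$ and $\sgn(X_t-o)\cdot(X_t-o)=\rO(X_t)$, exactly as you do. Your additional check that $a_r(t,\rO(X_t))$ is defined $\Leb^1\otimes\PPP^o$-a.e.\ is harmless, though it is already implicit in \cref{Le:Itoeps1}.
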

In the sequel, we will carefully send $\smash{\varepsilon\to 0^+}$ in the above Itô formulas. The resulting SDE will constitute the desired Tanaka--Meyer semi\-martingale decomposition of the radial process $\smash{\rO\circ X}$. We also characterize the appearing local time precisely. We will have to deal with three mutually exclusive cases, namely
\begin{itemize}
    \item the centered case when the ``center'' $o$ is the starting point of $X$,
    \item the uncentered case in dimensions at least two, and
    \item the uncentered case in dimension one.
\end{itemize}
\subsection{Centered case, arbitrary dimension} We briefly recall a standard notion of convergence of processes.
\begin{definition}[ucp-Convergence]\label{Def:ucp} Let $x\in\R^d$ be given. Let $(Y^\varepsilon)_{\varepsilon>0}$ constitute a net of $\scrF_\bullet$-semi\-martingales. Let $Y$ be a further $\scrF_\bullet$-semimartingale. We say $(Y^\varepsilon)_{\varepsilon>0}$ converges to $Y$ uniformly on compact sets in probability, briefly \emph{ucp} and symbolically
\begin{align}
    Y = \PPP^x\textnormal{-}\textnormal{ucp-}\!\!\!\lim_{\varepsilon\to 0^+}Y^\varepsilon,
\end{align}
if for every $b\in\R_+$, the following convergence holds in $\PPP^x$-probability:
\begin{align}
    \lim_{\varepsilon\to 0^+}\sup\{\big\vert Y_t^\varepsilon-Y_t\big\vert : t\in [0,b]\} =0.
\end{align}
\end{definition}
Our results  from \cref{Th:CenteredRadial,Th:RadialTwo} below will follow by taking the ucp-limits as $\varepsilon\to 0^+$ of \cref{Le:Itoeps1,Cor:Itoeps}. As indicated above, the conclusions and proofs differ depending  on whether the process in question starts at the ``center'' $o$ or somewhere else. For the terms  where this distinction will not matter, we save the following  result.
\begin{proposition}[Three ucp-convergences]\label{Pr:Threeucp} For every $x\in\R^d$,  there is a standard $\smash{\scrF_\bullet}$-Brownian motion $B$ on $\R$ such that
\begin{align}
    &\PPP^{x}\textnormal{-}\textnormal{ucp-}\!\!\!\lim_{\varepsilon\to 0^+} \Big[\rO_\varepsilon \circ X -\rO_\varepsilon(x)-\int_{[0,\bullet]} \sqrt{2a(s,\vert X_s-x\vert)}\,\frac{X_s-o}{\rO_\varepsilon(X_s)}\cdot\rmd W_s\\
    &\qquad\qquad\qquad\qquad -\int_{[0,\bullet]} a_r(s,\vert X_s-x\vert)\,\sgn(X_s-x)\cdot\frac{X_s-o}{\rO_\varepsilon(X_s)}\d s\Big]\\
    &\qquad\qquad = \rO\circ X -\rO(x)- \int_{[0,\bullet]}\sqrt{2a(s,\vert X_s-x\vert)} \d B_s\\
    &\qquad\qquad\qquad\qquad - \int_{[0,\bullet]} a_r(s,\vert X_s-x\vert)\,\sgn(X_s-x)\cdot\sgn(X_s-o)\d s.
\end{align}
\end{proposition}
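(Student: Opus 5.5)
We prove the three convergences separately and then add them. The term $\rO_\varepsilon\circ X-\rO_\varepsilon(x)$ is the easy one. Since $0\leq \rO_\varepsilon-\rO\leq\varepsilon$ uniformly on $\R^d$, we have $\sup_{t\in[0,b]}\vert \rO_\varepsilon(X_t)-\rO(X_t)\vert\leq\varepsilon$ deterministically. Likewise $\vert\rO_\varepsilon(x)-\rO(x)\vert\leq\varepsilon$, so this piece converges uniformly and a fortiori ucp.

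For the stochastic integral we first construct $B$. Set
\begin{align*}
B_t:=\int_{[0,t]}\sgn(X_s-o)\cdot\rmd W_s+\int_{[0,t]}1_{\{X_s=o\}}\,e\cdot\rmd W_s,
\end{align*}
where $e$ is a fixed unit vector of $\R^d$. The integrand $\sgn(X_s-o)+1_{\{X_s=o\}}e$ has unit length at every time. By Lévy's characterization, $B$ is therefore a standard $\scrF_\bullet$-Brownian motion on $\R$, since $[B,B]_t=t$.

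Next we note that $\sqrt{a(s,\vert X_s-x\vert)}\,1_{\{X_s=o\}}=0$ for $\Leb^1\otimes\PPP^x$-a.e.~$(s,\omega)$. Indeed, for $s>0$ the law of $X_s$ is absolutely continuous by \cref{Def:PWS}, so $\PPP^x[X_s=o]=0$, and Fubini gives the claim. Consequently
\begin{align*}
\int_{[0,\bullet]}\sqrt{2a(s,\vert X_s-x\vert)}\d B_s=\int_{[0,\bullet]}\sqrt{2a(s,\vert X_s-x\vert)}\,\sgn(X_s-o)\cdot\rmd W_s.
\end{align*}

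The difference of the $\varepsilon$-integral and this limit is the martingale
\begin{align*}
\int_{[0,\bullet]} \sqrt{2a(s,\vert X_s-x\vert)}\,\Big[\frac{X_s-o}{\rO_\varepsilon(X_s)}-\sgn(X_s-o)\Big]\cdot\rmd W_s.
\end{align*}
We control it with Doob's $L^2$-inequality and the Itô isometry. The supremum over $[0,b]$ then has second moment at most
\begin{align*}
8\,\EEE^x\int_{[0,b]}a(s,\vert X_s-x\vert)\,\Big\vert\frac{X_s-o}{\rO_\varepsilon(X_s)}-\sgn(X_s-o)\Big\vert^2\d s.
\end{align*}
The bracket is bounded by $1$ and tends to $0$ pointwise away from $\{X_s=o\}$, which is a null set as just shown. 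The integrand is dominated by $a$, which is integrable by \eqref{Eq:Integrability}. Dominated convergence gives $L^2$ convergence of the supremum, hence ucp convergence.

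The drift term is handled the same way, pathwise in $L^1$:
\begin{align*}
\EEE^x\Big[\sup_{t\leq b}\Big\vert\int_{[0,t]} a_r(s,\vert X_s-x\vert)\,\sgn(X_s-x)\cdot\Big[\frac{X_s-o}{\rO_\varepsilon(X_s)}-\sgn(X_s-o)\Big]\d s\Big\vert\Big]
\end{align*}
is at most $\EEE^x\int_{[0,b]}\vert a_r\vert\,\vert\cdots\vert\d s$. This tends to $0$ by dominated convergence, using the second bound in \eqref{Eq:Integrability} and the same a.e.~pointwise convergence. Convergence in $L^1$ of the supremum implies ucp convergence.

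Summing the three convergences yields the asserted identity. The only delicate point is the behaviour at $X_s=o$, where $\sgn(X_s-o)=0$ and the approximating directions do not converge. It is resolved by the absolute continuity of the marginals for $s>0$ together with Fubini; the time $s=0$ itself is irrelevant, being a Lebesgue null set. A second subtlety is that $B$ must be a genuine Brownian motion even though the integrand vanishes on $\{X=o\}$; the auxiliary term involving $e$ repairs this without changing any of the integrals above.
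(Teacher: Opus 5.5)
Your proof is correct and follows the paper's argument essentially step by step. You use $\vert\rO_\varepsilon-\rO\vert\leq\varepsilon$ for the first term, Doob's $L^2$-inequality with Itô's isometry and dominated convergence for the martingale term, Lévy's characterization for $B$, and $L^1$ (i.e.\ Markov) plus dominated convergence for the drift.

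The only differences are cosmetic. The paper takes $B_t=\int_{[0,t]}\sgn(X_s-o)\cdot\rmd W_s$ directly, because the occupation identity $\EEE^x[\int_{[0,t]}1_{\{X_s=o\}}\d s]=0$ already gives $[B]_t=t$ a.s. Your auxiliary $e$-term vanishes a.s. and is therefore unnecessary, though harmless. Also, contrary to your closing remark, at $X_s=o$ both $\frac{X_s-o}{\rO_\varepsilon(X_s)}$ and $\sgn(X_s-o)$ equal $0$, so the pointwise convergence of the integrands in fact holds everywhere.
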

\begin{proof} We will show ucp-convergence of the three summands in question separately. In several steps, we will tacitly use that all local martingales considered below are in fact true martingales, which is a simple consequence of \eqref{Eq:Integrability} and Itô's isometry (applied to the Itô integrals with respect to the driving Brownian motion $W$).

First, since $\vert \rO_\varepsilon-\rO\vert\leq \varepsilon$ everywhere on $\R^d$, we clearly have
\begin{align}
    \PPP^{x}\textnormal{-}\textnormal{ucp-}\!\!\!\lim_{\varepsilon\to 0^+} \rO_\varepsilon \circ X-\rO_\varepsilon (x) =\rO\circ X-\rO(x).
\end{align}

Second, we claim
\begin{align}
&\PPP^{x}\textnormal{-}\textnormal{ucp-}\!\!\!\lim_{\varepsilon\to 0^+}\int_{[0,\bullet]} \sqrt{2a(s,\vert X_s-x\vert)}\,\frac{X_s-o}{\rO_\varepsilon(X_s)}\cdot\rmd W_s\\
    &\qquad\qquad = \int_{[0,\bullet]}\sqrt{2a(s,\vert X_s-x\vert)} \sgn(X_s-o)\cdot\rmd W_s.
\end{align}
First note the process on the right-hand side is $\smash{\PPP^{x}}$-a.s.~well-defined. This follows  from \eqref{Eq:Integrability}, Itô's isometry, and the following occupation time identity based on  Fubini's theorem that implies the event $\{\rO\circ X=0\}$ is $(\Leb^1\mres\R_+)\otimes\PPP^{x}$-negligible, where $t\in\R_+$:
\begin{align}\label{Eq:Occup}
    \EEE^{x}\Big[\!\int_{[0,t]} 1_{\{\rO(X_s)=0\}}\d s\Big] = \int_{[0,t]}\PPP^{x}[X_s=o]\d s=0;
\end{align}
in the last identity, we used the law of $X_s$ is $\smash{\Leb^d}$-absolutely continuous for every $s>0$ by \eqref{Eq:ClosedMcKeanVlasov}.
Given $b\in\R_+$ and $\delta >0$, combining Doob's $L^2$-inequality, cf.~e.g.~Revuz--Yor \cite{revuz-yor1999}*{Thm. II.1.7}, with Itô's isometry gives
\begin{align}
    &\PPP^{x}\Big[\!\sup\!\Big\lbrace \Big\vert\!\int_{[0,t]} \sqrt{2a(s,\vert X_s-x\vert)}\,\Big[\frac{X_s-o}{\rO_\varepsilon(X_s)}-\sgn(X_s-o)\Big]\cdot\rmd W_s\Big\vert : t\in[0,b]\Big\rbrace \geq \delta \Big]\\
    &\qquad\qquad \leq \frac{2}{\delta^2}\,\EEE^{x}\Big[\Big[\!\int_{[0,b]} \sqrt{a(s,\vert X_s-x\vert)}\,\Big[\frac{X_s-o}{\rO_\varepsilon(X_s)} - \sgn(X_s-o)\Big]\cdot\rmd W_s\Big]^2\Big]\\
    &\qquad\qquad = \frac{2}{\delta^2}\,\EEE^{x}\Big[\!\int_{[0,b]} a(s,\vert X_s-x\vert)\,\Big\vert  \frac{X_s-o}{\rO_\varepsilon(X_s)} - \sgn(X_s-o)\Big\vert^2\d s\Big].
\end{align}
Using \eqref{Eq:Integrability} and Lebesgue's dominated convergence theorem yields
\begin{align}
    \lim_{\varepsilon\to 0^+}\EEE^{x}\Big[\!\int_{[0,b]} a(s,\vert X_s-x\vert)\,\Big\vert  \frac{X_s-o}{\rO_\varepsilon(X_s)} - \sgn(X_s-o)\Big\vert^2\d s\Big]=0,
\end{align}
to establish the second convergence. To conclude, we claim the process $B$ defined by
\begin{align}\label{Eq:Bdef}
    B_t:= \int_{[0,t]}\sgn(X_s-o)\cdot\rmd W_s
\end{align}
constitutes a standard $\smash{\scrF_\bullet}$-Brownian motion under $\smash{\PPP^x}$.  This, however, is clear from Lévy's characterization of Brownian motion, cf.~e.g.~Revuz--Yor \cite{revuz-yor1999}*{Thm.~IV.3.6}, noting  $B_0=0$ and, for every $t\in\R_+$, \eqref{Eq:Occup} implies
\begin{align}
    [B]_t = \int_{[0,t]}\big\vert\! \sgn(X_s-o)\big\vert^2\d s = t\quad\PPP^{x}\textnormal{-a.s.}
\end{align}

The third claimed convergence
\begin{align}
    &\PPP^{x}\textnormal{-}\textnormal{ucp-}\!\!\!\lim_{\varepsilon\to 0^+}\int_{[0,\bullet]} a_r(s,\vert X_s-x\vert)\sgn(X_s-x)\cdot\frac{X_s-o}{\rO_\varepsilon(X_s)}\d s\\
    &\qquad\qquad = \int_{[0,\bullet]} a_r(s,\vert X_s-x\vert)\sgn(X_s-x)\cdot\sgn(X_s-o)\d s
\end{align}
follows from Markov's inequality and Lebesgue's dominated convergence theorem, arguing as in the previous step and using \eqref{Eq:Integrability}.
\end{proof}
The remaining drift terms from \cref{Le:Itoeps1,Cor:Itoeps} involving reciprocals of the radial process require the indicated case distinction for the starting point. When the latter coincides with $o$, no further hypotheses are needed to ensure that one can ``set $\varepsilon$ to zero'' in the above Itô formulas. The reader may find it useful to consult the much simpler proof of \cref{Th:RadialTwo} first to get a feeling which additional challenges we face in the proof of \cref{Th:CenteredRadial}, where we cannot use ``crude'' estimates, cf.~\eqref{Eq:ETADEF}.

\begin{theorem}[Tanaka--Meyer formula for centered radial process]\label{Th:CenteredRadial} There is a standard $\scrF_\bullet$-Brownian motion $B$ on $\R$ such that
\begin{align}
    \rmd \rO(X_t)&=\sqrt{2a(t,\rO(X_t))}\d B_t + a(t,\rO(X_t))\,\frac{d-1}{\rO(X_t)}\d t + a_r(t,\rO(X_t))\d t\\
    &= \sqrt{2a(t,\rO(X_t))}\d B_t + \sfL_{p,m}^o\rO(X_t)\d t\quad\PPP^{o}\textnormal{\textit{-a.s.}},
\end{align}
where $\smash{\sfL_{p,m}^o}$ is from \eqref{Eq:Generator}.
\end{theorem}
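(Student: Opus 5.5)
We start from \cref{Cor:Itoeps} with $x=o$ and pass to the limit $\varepsilon\to 0^+$ term by term. In the centered case $\sgn(X_s-x)\cdot\sgn(X_s-o)=1$ off the set $\{X_s=o\}$, and by \eqref{Eq:Occup} that set is $(\Leb^1\mres\R_+)\otimes\PPP^o$-negligible. Hence \cref{Pr:Threeucp} already handles the martingale part and the $a_r$-drift. It yields the Brownian motion $B$ from \eqref{Eq:Bdef} and the ucp-limit
\begin{align*}
\rO\circ X-\int_{[0,\bullet]}\sqrt{2a(s,\rO(X_s))}\d B_s-\int_{[0,\bullet]}a_r(s,\rO(X_s))\d s,
\end{align*}
where we used $\rO(o)=0$ and $\rO_\varepsilon(o)=\varepsilon\to 0$. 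What remains is to identify the ucp-limit of
\begin{align*}
\int_{[0,\bullet]}a(s,\rO(X_s))\Big[\frac{d-1}{\rO_\varepsilon(X_s)}+\frac{\varepsilon^2}{\rO_\varepsilon(X_s)^3}\Big]\d s.
\end{align*}
We claim it equals $\int_{[0,\bullet]}a(s,\rO(X_s))\,(d-1)/\rO(X_s)\d s$ and that no local-time residue survives.

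Both integrands are nonnegative, so it is enough to prove convergence in $L^1(\PPP^o)$ at a fixed time $b$. Monotonicity in $s$ then upgrades this to ucp. We compute expectations through the marginals $\Law X_s=U(s,\cdot)$ in polar coordinates, which turns everything into explicit one-dimensional integrals in $r$. The first term is
\begin{align*}
\int_{[0,b]}\omega_{d-1}\int_{\R_+}a(s,r)\,U(s,r)\,\frac{d-1}{\sqrt{r^2+\varepsilon^2}}\,r^{d-1}\d r\d s.
\end{align*}
By \cref{Le:aU-bounds-m}, $aU\lesssim s^{-1-d/\beta+p/(p-1)\beta}r^{(p-2)/(p-1)}$, so the integrand is dominated by a multiple of $s^{-1-d/\beta+p/(p-1)\beta}\,r^{(p-2)/(p-1)+d-2}$ on $r<R(s)$. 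This is integrable in $r$ near $0$ because $(p-2)/(p-1)+d-1>0$. Indeed, $dp>d+1$ gives $d(p-1)>1$, hence $(p-2)+(d-1)(p-1)=d(p-1)-1>0$. After integrating up to $R(s)\asymp s^{1/\beta}$, the $s$-exponent becomes $-1+(p/(p-1)+(p-2)/(p-1)+d-1-d)/\beta$. Here one has to check that the $s$-integral near $0$ converges; I expect the scaling to give exactly $s^{-1+c/\beta}$ with $c>0$. Dominated convergence then gives convergence to the claimed limit, which is itself finite in expectation.

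The main obstacle is the correction term $\varepsilon^2/\rO_\varepsilon^3$, which is the potential source of a local time at $o$. The plan is to compute its expectation and show it tends to $0$. After the substitution $r=\varepsilon u$,
\begin{align*}
\int_{\R_+}a(s,r)\,U(s,r)\,\frac{\varepsilon^2 r^{d-1}}{(r^2+\varepsilon^2)^{3/2}}\d r\lesssim s^{-1-d/\beta+p/(p-1)\beta}\,\varepsilon^{(p-2)/(p-1)+d-1}\int_{\R_+}\frac{u^{(p-2)/(p-1)+d-1}}{(u^2+1)^{3/2}}\d u.
\end{align*}
The $\varepsilon$-exponent $(d(p-1)-1)/(p-1)$ is positive by \eqref{Eq:ranges}. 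However, the $u$-integral converges only when that exponent is less than $2$, and this fails in higher dimensions. So we split at $r=\varepsilon$. On $r<\varepsilon$ we use the bound above. On $r\ge\varepsilon$ we bound $\varepsilon^2/(r^2+\varepsilon^2)^{3/2}\le\varepsilon^{\theta}r^{-1-\theta}$ for some small $\theta\in(0,1]$ with $(p-2)/(p-1)+d-2-\theta>-1$, and integrate up to $R(s)$. This yields $O(\varepsilon^{\theta})$ times an $s$-integrable factor over $[0,b]$, using the same exponent bookkeeping as before. That is exactly where the precise estimates of \cref{Le:aU-bounds-m} enter, rather than crude bounds via \eqref{Eq:Integrability}. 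Thus the correction term converges to $0$ in $L^1$, hence ucp.

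Finally, the second expression for the drift comes from identifying $\sfL^o_{p,m}\rO$ via \eqref{Eq:Generator} with $w^o=U(\cdot,\rO)$. On $\R^d\setminus\{o\}$ this is a chain-rule computation using \eqref{Eq:Derreps} with $\varepsilon=0$: the result is $a\,\Delta\rO+a_r\,\nabla\rO\cdot\nabla\rO=a\,(d-1)/\rO+a_r$, interpreted as $\div[a\nabla\rO]$ as in \eqref{Eq:Blabla}. The exceptional set $\{X_t=o\}$ is irrelevant by \eqref{Eq:Occup}.
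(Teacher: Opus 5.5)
Your scheme matches the paper's: Itô for $\rO_\varepsilon$, then \cref{Pr:Threeucp}, then $L^1(\PPP^o)$-convergence of the two remaining drift terms via the marginals, polar coordinates and \cref{Le:aU-bounds-m}. Your handling of the $(d-1)/\rO_\varepsilon$ term and of the region $r\ge\varepsilon$ is sound. There is, however, a genuine gap in the region $r<\varepsilon$ of the correction term $\varepsilon^2/\rO_\varepsilon^3$.

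There you reuse the substitution bound, i.e.
\begin{align*}
\int_{[0,\varepsilon)} a(s,r)\,U(s,r)\,\frac{\varepsilon^2\,r^{d-1}}{\sqrt{r^2+\varepsilon^2}^3}\d r\lesssim s^{-1-d/\beta+p/(p-1)\beta}\,\varepsilon^{d-1/(p-1)}.
\end{align*}
The factor $s^{-1+(p/(p-1)-d)/\beta}$ is integrable near $s=0$ only when $d<p/(p-1)$. For instance, for all $d\ge 2$ and $p\ge 2$ its integral over $[0,b]$ is infinite, so your bound for this piece is not even finite. This also breaks the unsplit estimate in cases where the $u$-integral does converge, e.g.~$d=p=2$, so the obstruction is not only the tail in $u$.

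The cause is that the substitution forgets the compact support. For $s<R^{-1}(\varepsilon)$ the function $a(s,\cdot)\,U(s,\cdot)$ vanishes outside $[0,R(s)]\subset[0,\varepsilon)$. Integrating $r^{d-1+(p-2)/(p-1)}$ up to $\varepsilon$ instead of $R(s)$ overestimates by a non-integrable amount. This is precisely why the paper splits the time integral at $R^{-1}(\varepsilon)$. Part B.I bounds $s\le R^{-1}(\varepsilon)$ by $\varepsilon^{-1}\int_{[0,R^{-1}(\varepsilon)]}s^{-1+2/\beta}\d s\asymp\varepsilon$. Part B.II.a uses $\varepsilon\le R(s)$ for $s\ge R^{-1}(\varepsilon)$ to convert $\varepsilon^{d-1/(p-1)-\delta}$ into $R(s)^{d-1/(p-1)-\delta}$, which restores $s$-integrability.

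Within your framework the repair is short and actually simplifies the paper's argument. Your interpolation bound holds for \emph{all} $r>0$, because $\sqrt{r^2+\varepsilon^2}^3\ge\max\{r,\varepsilon\}^3\ge r^{1+\theta}\,\varepsilon^{2-\theta}$ for $\theta\in[0,2]$. Let $\rmB_\varepsilon$ denote the expectation of the correction term over $[0,b]$. Integrating only over $[0,R(s)]$ then gives
\begin{align*}
\rmB_\varepsilon\lesssim\varepsilon^\theta\int_{[0,b]}s^{-1-d/\beta+p/(p-1)\beta}\int_{[0,R(s)]}r^{d-2-\theta+(p-2)/(p-1)}\d r\d s\asymp\varepsilon^\theta\int_{[0,b]}s^{-1+(1-\theta)/\beta}\d s.
\end{align*}
This is finite for $0<\theta<\min\{1,d-1/(p-1)\}$. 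No splitting in $r$ or $s$ is needed, and there is no separate logarithmic case $\alpha=-1$. Note that $\theta$ must be strictly less than $1$: the value $\theta=1$ you allowed produces $s^{-1}$.
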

\begin{proof}  By \cref{Cor:Itoeps,Pr:Threeucp}, it suffices to prove
\begin{align}
    \PPP^{o}\textnormal{-}\textnormal{ucp-}\!\!\!\lim_{\varepsilon\to 0^+}\int_{[0,\bullet]} a(s,\rO(X_s))\,\Big[\frac{d-1}{\rO_\varepsilon(X_s)}+\frac{\varepsilon^2}{\rO_\varepsilon(X_s)^3}\Big]\d s = \int_{[0,\bullet]} a(s,\rO(X_s))\,\frac{d-1}{\rO(X_s)} \d s
\end{align}
and that the Lebesgue integral on the right-hand side is finite $\smash{\PPP^{o}}$-a.s. Let $b\in\R_+$ and $\delta>0$. Using \eqref{Eq:Occup} and Markov's inequality,
\begin{align}
    &\PPP^{o}\Big[\!\sup\!\Big\lbrace\Big\vert\!\int_{[0,t]} a(s, \rO(X_s))\,\Big[\frac{d-1}{\rO_\varepsilon(X_s)}+\frac{\varepsilon^2}{\rO_\varepsilon(X_s)^3} - \frac{d-1}{\rO(X_s)}\Big]\d s\Big\vert : t\in[0,b]\Big\rbrace \geq 2\delta\Big]\\
    &\qquad\qquad \leq \PPP^{o}\Big[\!\int_{[0,b]}  a(s, \rO(X_s))\, \Big\vert\frac{d-1}{\rO_\varepsilon(X_s)} - \frac{d-1}{\rO(X_s)}\Big\vert\d s \geq \delta\Big]\\
    &\qquad\qquad\qquad\qquad +\PPP^{o}\Big[\!\int_{[0,b]}a(s, \rO(X_s))\,\frac{\varepsilon^2}{\rO_\varepsilon(X_s)^3}\d s\geq \delta\Big]\\
    &\qquad\qquad \leq\frac{d-1}{\delta}\underbrace{\EEE^{o}\Big[\!\int_{[0,b]}  a(s, \rO(X_s))\, \Big\vert\frac{1}{\rO_\varepsilon(X_s)} - \frac{1}{\rO(X_s)}\Big\vert\d s\Big]}_{\rmA_\varepsilon}\\
    &\qquad\qquad\qquad\qquad + \frac{1}{\delta}\underbrace{\EEE^{o}\Big[\!\int_{[0,b]}a(s, \rO(X_s))\,\frac{\varepsilon^2}{\rO_\varepsilon(X_s)^3}\d s\Big]}_{\rmB_\varepsilon}\!.
\end{align}

\textit{Claim} A. We claim
\begin{align}\label{Eq:Aeconv}
    \lim_{\varepsilon\to 0^+}\rmA_\varepsilon=0.
\end{align}
Indeed, by our hypothesis for the starting point, we have $\smash{\Law\,X_s=U(s,\rO) \,\Leb^d}$ for every $s\in[0,b]$ by \eqref{Eq:ClosedMcKeanVlasov}; thus, employing Fubini's theorem and using polar coordinates,
\begin{align}\label{Eq:SecondExp}
\begin{split}
\rmA_\varepsilon &= \EEE^{o}\Big[\!\int_{[0,b]} a(s,\rO(X_s))\,\Big[\frac{1}{\rO(X_s)}-\frac{1}{\rO_\varepsilon(X_s)}\Big]\d s\Big]\\
    &\leq \EEE^{o}\Big[\!\int_{[0,b]} a(s,\rO(X_s))\,\frac{1}{\rO(X_s)}\d s\Big]\\
    &=\int_{[0,b]}\int_{B_{R(s)}(o)} a(s,\rO(y))\,\frac{1}{\rO(y)}\,U(s,\rO(y))\d y\d s\\
    & = \omega_{d-1}\int_{[0,b]}\int_{[0, R(s)]} a(s,r)\,r^{d-2}\,U(s,r)\d r\d s.
\end{split}
\end{align}
By \cref{Le:aU-bounds-m}, we obtain
\begin{align}\label{Eq:Ints}
\begin{split}
    &\int_{[0,b]}\int_{[0, R(s)]} a(s,r)\,r^{d-2}\,U(s,r)\d r\d s\\
    &\qquad\qquad \lesssim \int_{[0,b]} s^{-1-d/\beta+ p/(p-1)\beta}\int_{[0, R(s)]} r^{d-2+(p-2)/(p-1)}\d r\d s.
    \end{split}
\end{align}
Our standing hypothesis \eqref{Eq:ranges} implies
\begin{align}\label{Eq:Expd2}
    d-2+\frac{p-2}{p-1}>-1.
\end{align}
The latter inner integral from \eqref{Eq:Ints} is  thus finite; by definition \eqref{Eq:Rdef} of $R$, we find  the right-hand side of \eqref{Eq:Ints} is no larger than
\begin{align}\label{Eq:Larger!}
\begin{split}
    \int_{[0,b]}s^{-1-d/\beta+p/(p-1)\beta}\,R(s)^{d-1+(p-2)/(p-1)}\d s \asymp \int_{[0,b]} s^{-1+1/\beta}\d s,
\end{split}
\end{align}
which is clearly finite. In particular, by \eqref{Eq:SecondExp} the integrand of $\rmA_\varepsilon$ admits a $\smash{\PPP^{o}}$-integrable dominant function. By Lebesgue's dominated convergence theorem, we thus get \eqref{Eq:Aeconv}.

\textit{Claim} B. We claim
\begin{align}\label{Eq:Beconv}
    \lim_{\varepsilon\to 0^+}\rmB_\varepsilon=0.
\end{align}
Arguing as for \eqref{Eq:SecondExp},
\begin{align}
    \rmB_\varepsilon &= \EEE^{o}\Big[\!\int_{[0,b]}a(s,\rO(X_s))\,\frac{\varepsilon^2}{\rO_\varepsilon(X_s)^3}\d s\Big]\\
    &=\int_{[0,b]}\int_{B_{R(s)}(o)} a(s,\rO(y))\,\frac{\varepsilon^2}{\sqrt{\rO(y)^2+\varepsilon^2}^3}\,U(s,\rO(y))\d y\d s\\
    &=\omega_{d-1}\int_{[0,b]}\int_{[0,R(s)]} a(s,r)\,\frac{\varepsilon^2\,r^{d-1}}{\sqrt{r^2+\varepsilon^2}^3}\,U(s,r)\d r\d s.
\end{align}
Let $R^{-1}$ be  from \eqref{Eq:R-1def}. When $\varepsilon$ is sufficiently small we have $R^{-1}(\varepsilon) < b$ and thus
\begin{align}
    &\int_{[0,b]}\int_{[0,R(s)]}a(s,r)\,\frac{\varepsilon^2\,r^{d-1}}{\sqrt{r^2+\varepsilon^2}^3}\,U(s,r)\d r\d s\\
    &\qquad\qquad = \underbrace{\int_{[0,R^{-1}(\varepsilon)]}\int_{[0,R(s)]}a(s,r)\,\frac{\varepsilon^2\,r^{d-1}}{\sqrt{r^2+\varepsilon^2}^3}\,U(s,r)\d r\d s}_{\rmI_\varepsilon}\\
    &\qquad\qquad\qquad\qquad + \underbrace{\int_{[R^{-1}(\varepsilon),b]}\int_{[0,R(s)]}a(s,r)\,\frac{\varepsilon^2\,r^{d-1}}{\sqrt{r^2+\varepsilon^2}^3}\,U(s,r)\d r\d s}_{\rmI\rmI_\varepsilon}\!.
\end{align}

\textit{Part} B.I. To estimate the first integral, we simply use
\begin{align}\label{Eq:eps3}
    \sqrt{r^2+\varepsilon^2}^3 \geq \varepsilon^3
\end{align}
for every $r\in\R$, which yields
\begin{align}
    \rmI_\varepsilon \leq \frac{1}{\varepsilon}\int_{[0,R^{-1}(\varepsilon)]}\int_{[0,R(s)]}a(s,r)\,r^{d-1}\,U(s,r)\d r\d s.
\end{align}
Arguing as for the bound of \eqref{Eq:SecondExp} by \eqref{Eq:Ints},
\begin{align}
    \rmI_\varepsilon\lesssim \frac{1}{\varepsilon}\int_{[0,R^{-1}(\varepsilon)]} s^{-1+2/\beta}\d s \asymp  \frac{R^{-1}(\varepsilon)^{2/\beta}}{\varepsilon}\asymp\varepsilon;
\end{align}
we thus deduce
\begin{align}
    \lim_{\varepsilon\to 0^+}\rmI_\varepsilon=0.
\end{align}

\textit{Part} B.II. To estimate the other integral, we perform one more decomposition to get
\begin{align}
    \rmI\rmI_\varepsilon &= \underbrace{\int_{[R^{-1}(\varepsilon),b]}\int_{[0,\varepsilon]}a(s,r)\,\frac{\varepsilon^2\,r^{d-1}}{\sqrt{r^2+\varepsilon^2}^3}\,U(s,r)\d r\d s}_{\rma_\varepsilon}\\
    &\qquad\qquad + \underbrace{\int_{[R^{-1}(\varepsilon),b]}\int_{[\varepsilon,R(s)]}a(s,r)\,\frac{\varepsilon^2\,r^{d-1}}{\sqrt{r^2+\varepsilon^2}^3}\,U(s,r)\d r\d s}_{\rmb_\varepsilon}\!.
\end{align}

\textit{Subpart} B.II.a. Using \eqref{Eq:eps3}, \cref{Le:aU-bounds-m}, and finally \eqref{Eq:Expd2},
\begin{align}
    \rma_\varepsilon &\leq \frac{1}{\varepsilon}\int_{[R^{-1}(\varepsilon),b]}\int_{[0,\varepsilon]}a(s,r)\,r^{d-1}\,U(s,r)\d r\d s\\
    &\lesssim \int_{[R^{-1}(\varepsilon),b]}s^{-1-d/\beta +p/(p-1)\beta}\,\varepsilon^{d+(p-2)/(p-1)-1}\d s.
\end{align}
Again by \eqref{Eq:Expd2}, the exponent of $\varepsilon$ is positive. In particular, we may and will fix $\delta\in (0,1)$ sufficiently small with the property
\begin{align}
    d+\frac{p-2}{p-1}-1-\delta>0.
\end{align}
Combining this with the inequality $\varepsilon\leq R(s)$ for every $s\in [R^{-1}(\varepsilon), b]$,
\begin{align}
    \rma_\varepsilon &\leq \varepsilon^\delta\int_{[R^{-1}(\varepsilon),b]} s^{-1-d/\beta+p/(p-1)\beta}\,\varepsilon^{d+(p-2)/(p-1)-1-\delta}\d s\\
    &\leq\varepsilon^\delta\int_{[R^{-1}(\varepsilon),b]} s^{-1-d/\beta+p/(p-1)\beta}\,R(s)^{d+(p-2)/(p-1)-1-\delta}\d s\\
    &\lesssim  \varepsilon^\delta\int_{[0,b]} s^{-1+(1-\delta)/\beta}\d s.
\end{align}
Since $\delta\in (0,1)$, the latter integral is finite and
\begin{align}
    \lim_{\varepsilon\to 0^+}\rma_\varepsilon = 0.
\end{align}

\textit{Subpart} B.II.b. On the other hand, using the inequality
\begin{align}
    \sqrt{r^2+\varepsilon^2}^3\geq r^3
\end{align}
for every $r\in\R_+$ and \cref{Le:aU-bounds-m},
\begin{align}\label{Eq:starting}
\begin{split}
    \rmb_\varepsilon&\leq \varepsilon^2\int_{[R^{-1}(\varepsilon),b]}\int_{[\varepsilon,R(s)]}a(s,r) \,r^{d-4}\,U(s,r)\d r\d s\\
    &\lesssim \varepsilon^2\int_{[R^{-1}(\varepsilon),b]} s^{-1-d/\beta+p/(p-1)\beta} \int_{[\varepsilon,R(s)]} r^\alpha\d r\d s,
    \end{split}
\end{align}
where we abbreviate
\begin{align}\label{Eq:alpha}
    \alpha := d-4 +\frac{p-2}{p-1} >-3;
\end{align}
the last inequality comes from \eqref{Eq:Expd2}.
We now distinguish two cases depending on $\alpha$.

\textit{Subsubpart} B.II.b.1. If $\alpha$ is distinct from  $-1$,
\begin{align}
    \rmb_\varepsilon &\lesssim \varepsilon^2\int_{[R^{-1}(\varepsilon),b]}s^{-1-d/\beta+p/(p-1)\beta}\,R(s)^{\alpha+1}\d s\\
    &\qquad\qquad + \varepsilon^{\alpha+3}\int_{[R^{-1}(\varepsilon),b]} s^{-1-d/\beta+p/(p-1)\beta}\d s.
\end{align}
On the one hand, using the definition \eqref{Eq:alpha} of $\alpha$,
\begin{align}
    &\varepsilon^2\int_{[R^{-1}(\varepsilon),b]} s^{-1-d/\beta+p/(p-1)\beta}\,R(s)^{\alpha+1}\d s\asymp \varepsilon^2\int_{[R^{-1}(\varepsilon),b]} s^{-1-1/\beta}\d s \lesssim \frac{\varepsilon^2}{R^{-1}(\varepsilon)^{1/\beta}}\asymp\varepsilon,
\end{align}
which converges to zero as $\varepsilon\to 0^+$. On the other hand,
\begin{align}
    &\varepsilon^{\alpha+3}\int_{[R^{-1}(\varepsilon),b]} s^{-1-d/\beta+p/(p-1)\beta}\d s\\
    &\qquad\qquad\lesssim_b \begin{cases}\varepsilon^{\alpha+3}+\varepsilon^{\alpha+3}\,R^{-1}(\varepsilon)^{-d/\beta+p/(p-1)\beta} & \textnormal{if }d\neq p/(p-1),\\
    \varepsilon^{\alpha+3}+\varepsilon^{\alpha+3}\,\big\vert\!\log R^{-1}(\varepsilon)\big\vert & \textnormal{otherwise}
    \end{cases}
\end{align}
The first summands on the right-hand side clearly converge to zero since $\alpha +3>0$. In addition, using the definition \eqref{Eq:alpha} of $\alpha$ again,
\begin{align}
    \varepsilon^{\alpha+3}\,R^{-1}(\varepsilon)^{-d/\beta+p/(p-1)\beta} \asymp \varepsilon
\end{align}
provided $d\neq p/(p-1)$ and
\begin{align*}
    \varepsilon^{\alpha+3}\,\big\vert\!\log R^{-1}(\varepsilon)\big\vert \asymp_{d,p,m}\varepsilon^{\alpha+3}\,\big\vert\!\log\varepsilon\big\vert
\end{align*}
otherwise, and consequently
\begin{align}\label{Eq:II'convergence}
\lim_{\varepsilon\to 0^+}\rmb_\varepsilon =0.
\end{align}

\textit{Subsubpart} B.II.b.2. If $\alpha$ equals $-1$,  algebraic manipulations based on \eqref{Eq:alpha} show
\begin{align}
    \frac{p}{p-1}=d-1.
\end{align}
Therefore, starting again from \eqref{Eq:starting},
\begin{align}
    \rmb_\varepsilon &\lesssim \varepsilon^2\int_{[R^{-1}(\varepsilon),b]}s^{-1-d/\beta+(d-1)/\beta}\, \big\vert \!\log R(s)\big\vert\d s\\
    &\qquad\qquad + \varepsilon^2\int_{[R^{-1}(\varepsilon),b]}s^{-1-d/\beta+(d-1)/\beta} \,\big\vert\!\log \varepsilon\big\vert\d s\\
    &\lesssim \varepsilon^2\,\big\vert\!\log \varepsilon \big\vert\int_{[R^{-1}(\varepsilon),b]} s^{-1-1/\beta}\d s
\end{align}
for every sufficiently small $\varepsilon$; in the second inequality, we used that since $R^{-1}(\varepsilon)\to 0^+$ as $\varepsilon\to 0^+$ and  as $\log\circ R$ diverges to $-\infty$ near zero yet is bounded on $[1,b]$ if this interval is nonempty, for every sufficiently small $\varepsilon$ and every $s\in [R^{-1}(\varepsilon),b]$ we have
\begin{align}
    \big\vert\! \log R(s)\big\vert \leq \big\vert\!\log R(R^{-1}(\varepsilon))\big\vert = \big\vert \!\log\varepsilon\big\vert.
\end{align}
Using the definition \eqref{Eq:R-1def} of $R^{-1}$,
\begin{align}
    &\varepsilon^2\,\big\vert\!\log\varepsilon\big\vert\int_{[R^{-1}(\varepsilon),b]}s^{-1-1/\beta}\d s \lesssim \varepsilon^2\,\big\vert\!\log\varepsilon\big\vert \,R^{-1}(\varepsilon)^{-1/\beta} \asymp\varepsilon\,\big\vert\!\log\varepsilon\big\vert,
\end{align}
to terminate the proof of \eqref{Eq:II'convergence}.

In summary, the accomplishments of Parts B.I and B.II prove \eqref{Eq:Beconv}. In turn, Claims A and B establish the desired first identity.

The second identity follows by a straightforward  computation, justified $\PPP^{o}$-a.s.~by the preceding arguments.
\end{proof}

\begin{remark}[Explicit driving Brownian motion] If the  Brownian motion $W$ driving \eqref{Eq:ClosedMcKeanVlasov}  can be chosen independently of the starting point, the identity \eqref{Eq:Bdef} shows that in \cref{Th:CenteredRadial,Th:RadialTwo,Th:RadialOne}, the one-dimensional standard Brownian motion driving the radial process  can be chosen independently of the starting point.
\end{remark}

In particular, the centered radial process has trivial local time in \emph{any} dimension, which is unlike Euclidean Brownian motion; cf.~\cref{Re:BMLocal} below for details. If the process in question starts outside $o$, in accordance with Euclidean Brownian motion one still obtains a trivial local time as long as the dimension $d$ is at least two. The one-dimensional case is different and addressed in \cref{Th:RadialOne}.

\subsection{Uncentered case, dimension at least two}

\begin{theorem}[Tanaka--Meyer formula for high-dimensional radial process]\label{Th:RadialTwo} Let    $d\geq 2$. Then for every $x\in\R^d\setminus\{o\}$ there is a standard $\smash{\scrF_\bullet}$-Brownian motion $B$ on $\R$ such that
\begin{align}
    \rmd \rO(X_t) &= \sqrt{2a(t,\vert X_t-x\vert)}\d B_t + a(t,\vert X_t-x\vert)\,\frac{d-1}{\rO(X_t)}\d t\\
    &\qquad\qquad + a_r(t,\vert X_t-x\vert)\sgn(X_t-x)\cdot \sgn(X_t-o)\d t\\
    &=\sqrt{2a(t,\vert X_t-x\vert)}\d B_t +\sfL_{p,m}^x\rO(X_t)\d t\quad\PPP^{x}\textnormal{\textit{-a.s.}},
\end{align}
where $\smash{\sfL_{p,m}^x}$ is from \eqref{Eq:Generator}.
\end{theorem}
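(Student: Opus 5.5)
By \cref{Le:Itoeps1,Pr:Threeucp}, it remains to show that the term involving reciprocals of the approximate radial process converges:
\begin{align}
    \PPP^{x}\textnormal{-}\textnormal{ucp-}\!\!\!\lim_{\varepsilon\to 0^+}\int_{[0,\bullet]} a(s,\vert X_s-x\vert)\,\Big[\frac{d-1}{\rO_\varepsilon(X_s)}+\frac{\varepsilon^2}{\rO_\varepsilon(X_s)^3}\Big]\d s = \int_{[0,\bullet]} a(s,\vert X_s-x\vert)\,\frac{d-1}{\rO(X_s)}\d s.
\end{align}
We also need the right-hand side to be finite $\PPP^x$-a.s. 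As in the proof of \cref{Th:CenteredRadial}, Markov's inequality reduces this to two statements, $\rmA_\varepsilon\to 0$ and $\rmB_\varepsilon\to 0$. Here $\rmA_\varepsilon$ and $\rmB_\varepsilon$ are the analogous expectations with $\PPP^o$ replaced by $\PPP^x$ and with $a(s,\rO(X_s))$ replaced by $a(s,\vert X_s-x\vert)$.

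The difference from the centered case is that the diffusivity no longer degenerates at $o$. We therefore use the ``crude'' bound $a(s,r)\lesssim s^{-1+p/(p-1)\beta}\,r^{(p-2)/(p-1)}\le s^{-1+p/(p-1)\beta}R(s)^{(p-2)/(p-1)}$ from \cref{Le:aU-bounds-m}. This is legitimate when $p\geq 2$, and since $a=0$ beyond $R(s)$ it gives $a(s,\cdot)\lesssim s^{-1+1/\beta}$ in that case. When $p<2$ the exponent is negative, and we keep $\vert y-x\vert^{(p-2)/(p-1)}$ as a weight. This weight is locally integrable since $(2-p)/(p-1)<d$ by \eqref{Eq:ranges}.

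The key density input is $U(s,\cdot)\le s^{-d/\beta}C^\gamma$. Combined with $d\geq 2$, this makes $y\mapsto 1/\vert y-o\vert$ locally integrable against the law of $X_s$. For Claim A, Fubini's theorem gives the dominant
\begin{align}
    \EEE^x\Big[\!\int_{[0,b]}\frac{a(s,\vert X_s-x\vert)}{\rO(X_s)}\d s\Big]=\int_{[0,b]}\int_{B_{R(s)}(x)} \frac{a(s,\vert y-x\vert)\,U(s,\vert y-x\vert)}{\vert y-o\vert}\d y\d s.
\end{align}
The inner integral is at most $s^{-1-d/\beta+p/(p-1)\beta}$ times $\int_{B_{R(s)}(x)}\vert y-x\vert^{(p-2)/(p-1)}\vert y-o\vert^{-1}\d y$. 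A rearrangement or H\"older argument, putting both singularities at the same point, bounds the latter by $R(s)^{d-1+(p-2)/(p-1)}$. This yields $\lesssim s^{-1+1/\beta}$, exactly as in \eqref{Eq:Larger!}. Dominated convergence then gives $\rmA_\varepsilon\to 0$.

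For Claim B, note that $\varepsilon^2/\rO_\varepsilon^3\le \varepsilon^{2-\theta}\rO^{-1-\theta}$ for $\theta\in[0,2]$. We choose $\theta\in(0,1)$ so small that $\vert y-o\vert^{-1-\theta}$ is still integrable, which holds as $1+\theta<2\le d$. The same computation then bounds $\rmB_\varepsilon$ by $\varepsilon^{2-\theta}\int_{[0,b]}s^{-1+(1-\theta)/\beta}\d s\to 0$. Alternatively, since $\varepsilon^2/\rO_\varepsilon^3\le 1/\rO$ and the expression tends to $0$ pointwise off $\{\rO\circ X=0\}$, which is negligible by \eqref{Eq:Occup}, one can simply reuse the dominant from Claim A. This alternative is cleaner and is the route I would take.

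The step I expect to be the main obstacle is the uniform bound on the inner integral with two singularities, at $x$ and at $o$, when $p<2$ and $d=2$. I would handle it by splitting $B_{R(s)}(x)$ into the halves closer to $x$ and closer to $o$. On each half one singular factor is bounded by the other, so the integral is controlled by $\int_{B_{R(s)}}\vert z\vert^{(p-2)/(p-1)-1}\d z$. This converges because $d-1+(p-2)/(p-1)>0$ by \eqref{Eq:Expd2}.

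Finally, the generator identity $\sfL_{p,m}^x\rO=a\,\Delta\rO+\nabla a\cdot\nabla\rO$ on $\R^d\setminus\{o\}$ is a direct computation from \eqref{Eq:Generator} and \eqref{Eq:Derreps}. Here one uses $\nabla[a(t,\vert\cdot-x\vert)]=a_r\sgn(\cdot-x)$. The identity holds $\PPP^x$-a.s. along the path by \eqref{Eq:Occup}.
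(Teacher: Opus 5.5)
Your proof is correct, but the key estimate is organized differently from the paper's.

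\textbf{The paper's route.} The paper localizes around $o$ with $\eta=\vert x-o\vert/2$.
\begin{itemize}
\item On $\{\rO(X_s)\geq\eta\}$ the reciprocals are bounded by $2/\eta$, so the generic integrability \eqref{Eq:Integrability} and dominated convergence suffice.
\item On $\{\rO(X_s)<\eta\}$ one has $\vert y-x\vert\in(\eta,R(b))$. Hence $a\,U$ is bounded there and vanishes for $s<R^{-1}(\eta)$. The only remaining issue is $\int_{B_\eta(o)}\vert y-o\vert^{-1}\d y$, which is finite precisely because $d\geq 2$.
\item Claim B is then settled by explicitly splitting $\int_{[0,\eta]}\varepsilon^2 r^{d-1}(r^2+\varepsilon^2)^{-3/2}\d r$ at $r=\varepsilon$.
\end{itemize}

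\textbf{Your route.} You build one global dominant from the pointwise bound on $a\,U$ in \cref{Le:aU-bounds-m}, combined with a two-centre singular integral estimate. Your split of $B_{R(s)}(x)$ into the points closer to $x$ and those closer to $o$ is correct, and for $p<2$ it needs exactly \eqref{Eq:Expd2}. You then dispose of $\rmB_\varepsilon$ via $\varepsilon^2/\rO_\varepsilon^3\le 1/\rO$ and dominated convergence.

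\textbf{Comparison.}
\begin{itemize}
\item The paper's argument is cruder and more robust. Away from $o$ it barely uses the explicit profile. It also shows that the only obstruction is integrability of $1/\rO$ near $o$, which is exactly what fails in \cref{Th:RadialOne}. Its constants, however, degenerate as $x\to o$.
\item Your argument gives $\EEE^x\big[\int_{[0,b]}a(s,\vert X_s-x\vert)/\rO(X_s)\d s\big]\lesssim b^{1/\beta}$ uniformly in $x$ and $o$. It recovers \eqref{Eq:Larger!} when $x=o$.
\item Your domination argument for $\rmB_\varepsilon$ would also replace the lengthy Parts B.I--B.II of the centered proof. There the same dominant from \eqref{Eq:SecondExp} already works.
\end{itemize}

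\textbf{Two harmless slips.}
\begin{itemize}
\item The crude bound gives $a(s,\cdot)\lesssim s^{-1+2/\beta}$, not $s^{-1+1/\beta}$. You never use this.
\item In your first route for Claim B, the interpolation should read $\varepsilon^2/\rO_\varepsilon^3\le\varepsilon^{\theta}\rO^{-1-\theta}$, not with $\varepsilon^{2-\theta}$. For $p<2$ the admissibility condition is $\theta<d-1+(p-2)/(p-1)$ rather than $1+\theta<d$. This is irrelevant, since you take the dominated-convergence route.
\end{itemize}
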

\begin{proof}  By \cref{Cor:Itoeps,Pr:Threeucp}, it suffices to prove
\begin{align}
    &\PPP^{x}\textnormal{-}\textnormal{ucp-}\!\!\!\lim_{\varepsilon\to 0^+}\int_{[0,\bullet]} a(s,\vert X_s-x\vert)\,\Big[\frac{d-1}{\rO_\varepsilon(X_s)}+\frac{\varepsilon^2}{\rO_\varepsilon(X_s)^3}\Big]\d s\\
    &\qquad\qquad= \int_{[0,\bullet]} a(s,\vert X_s-x\vert)\,\frac{d-1}{\rO(X_s)} \d s
\end{align}
and that the Lebesgue integral on the right-hand side is finite $\smash{\PPP^{x}}$-a.s. Let $b\in\R_+$ and $\delta>0$. Arguing as in the proof of \cref{Th:CenteredRadial},
\begin{align}
    &\PPP^{x}\Big[\!\sup\!\Big\lbrace\Big\vert\!\int_{[0,t]} a(s,\vert X_s-x\vert)\,\Big[\frac{d-1}{\rO_\varepsilon(X_s)}+\frac{\varepsilon^2}{\rO_\varepsilon(X_s)^3} - \frac{d-1}{\rO(X_s)}\Big]\d s\Big\vert : t\in[0,b]\Big\rbrace \geq 2\delta\Big]\\
    &\qquad\qquad \leq\frac{d-1}{\delta}\,\underbrace{\EEE^{x}\Big[\!\int_{[0,b]}  a(s,\vert X_s-x\vert)\, \Big\vert\frac{1}{\rO_\varepsilon(X_s)} - \frac{1}{\rO(X_s)}\Big\vert\d s\Big]}_{\rmA_\varepsilon}\\
    &\qquad\qquad\qquad\qquad + \frac{1}{\delta}\,\underbrace{\EEE^{x}\Big[\!\int_{[0,b]}a(s,\vert X_s-x\vert)\,\frac{\varepsilon^2}{\rO_\varepsilon(X_s)^3}\d s\Big]}_{\rmB_\varepsilon}\!.
\end{align}
For later use, we define the positive constant
\begin{align}\label{Eq:ETADEF}
    \eta:=\frac{\vert x-o\vert}{2}.
\end{align}

\textit{Claim} A. We claim
\begin{align}
    \lim_{\varepsilon\to 0^+}\rmA_\varepsilon=0.
\end{align}
To this aim, we split up the expectation defining $\rmA_\varepsilon$ as
\begin{align}
    \rmA_\varepsilon &= \underbrace{\EEE^{x}\Big[\!\int_{[0,b]} a(s,\vert X_s-x\vert)\,\Big\vert\frac{1}{\rO_\varepsilon(X_s)} -\frac{1}{\rO(X_s)}\Big\vert\,\One_{\{\rO(X_s)\geq \eta\}}\d s\Big]}_{\rmI_\varepsilon}\\
    &\qquad\qquad + \underbrace{\EEE^{x}\Big[\!\int_{[0,b]} a(s,\vert X_s-x\vert)\,\Big\vert\frac{1}{\rO_\varepsilon(X_s)} -\frac{1}{\rO(X_s)}\Big\vert\,\One_{\{\rO(X_s)< \eta\}}\d s\Big]}_{\rmI\rmI_\varepsilon}\!.
\end{align}

\textit{Part} A.I. Given $s\in [0,b]$, on the event $\{\rO(X_s)\geq \eta\}$ we clearly have
\begin{align}
    \Big\vert\frac{1}{\rO_\varepsilon(X_s)} -\frac{1}{\rO(X_s)}\Big\vert \leq \frac{2}{\eta}.
\end{align}
Thus, by \eqref{Eq:Integrability} and Lebesgue's dominated convergence theorem,
\begin{align}
    \lim_{\varepsilon\to 0^+} \rmI_\varepsilon=0.
\end{align}

\textit{Part} A.II. Since
\begin{align}
    \rmI\rmI_\varepsilon \leq \EEE^{x}\Big[\!\int_{[0,b]} a(s,\vert X_s-x\vert)\,\frac{1}{\rO(X_s)} \,\One_{\{\rO(X_s)< \eta\}}\d s\Big]
\end{align}
from a pointwise inequality between the respective integrands, by Lebesgue's dominated convergence theorem it suffices to prove the latter expectation is finite to establish
\begin{align}\label{Eq:concludeproof}
    \lim_{\varepsilon\to 0^+}\rmI\rmI_\varepsilon=0.
\end{align}
As in the proof of \cref{Th:CenteredRadial},
\begin{align}
    &\EEE^{x}\Big[\!\int_{[0,b]} a(s,\vert X_s-x\vert)\,\frac{1}{\rO(X_s)} \,\One_{\{\rO(X_s)< \eta\}}\d s\Big]\\
    &\qquad\qquad = \int_{[0,b]}\int_{B_{R(s)}(x)\cap B_\eta(o)} a(s,\vert y-x\vert)\,\frac{1}{\rO(y)}\,U(s,\vert y-x\vert)\d y\d s.
\end{align}
The triangle inequality gives $\vert y - x\vert > \eta$ for every $\smash{y\in B_\eta(o)}$. Moreover, by definition of $U$ we have $a(s,r)\,U(s,r) = 0$ for every $s\in[0,R^{-1}(\eta)]$ as long as $r \geq \eta$, where $R^{-1}$ is the inverse of $R$ from \eqref{Eq:R-1def}. Moreover, as $R$ is bounded on $[0,b]$, so is $\vert \cdot-\,x\vert$ on $B_{R(b)}(x)$. These observations combine with \cref{Le:aU-bounds-m} to imply
\begin{align}\label{Eq:Analog}
\begin{split}
    &\int_{[0,b]}\int_{B_{R(s)}(x)\cap B_\eta(o)} a(s,\vert y-x\vert)\,\frac{1}{\rO(y)}\,U(s,\vert y-x\vert)\d y\d s\\
    &\qquad\qquad \leq \int_{[R^{-1}(\eta),b]}\int_{B_{R(b)}(x)\cap B_\eta(o)} a(s,\vert y-x\vert)\,\frac{1}{\rO(y)}\,U(s,\vert y-x\vert)\d y\d s\\
    &\qquad\qquad \lesssim \Big[\!\int_{[R^{-1}(\eta),b]} s^{-1-d/\beta+p/(p-1)\beta}\d s\Big]\,\Big[\!\int_{B_\eta(o)} \frac{1}{\rO(y)}\d y\Big]\\
    &\qquad\qquad \lesssim_{b,\eta}  \int_{[0,\eta]} r^{d-2}\d r,
    \end{split}
\end{align}
where we applied polar coordinates in the last inequality. Since $d$ is at least two, the last integral is clearly finite, to conclude the proof of \eqref{Eq:concludeproof}.

\textit{Claim} B. We claim
\begin{align}
    \lim_{\varepsilon\to 0^+}\rmB_\varepsilon=0.
\end{align}
By an analogous decomposition argument as for Claim A, it suffices to prove
\begin{align}
    \lim_{\varepsilon\to 0^+}\EEE^{x}\Big[\!\int_{[0,b]} a(s,\vert X_s-x\vert)\,\frac{\varepsilon^2}{\rO_\varepsilon(X_s)^3}\,\One_{\{\rO(X_s)<\eta\}}\d s\Big]=0;
\end{align}
in turn, in analogy to \eqref{Eq:Analog} this reduces to showing
\begin{align}
    \lim_{\varepsilon\to 0^+} \int_{[0,\eta]} \frac{\varepsilon^2\,r^{d-1}}{\sqrt{r^2+\varepsilon^2}^3}\d r=0.
\end{align}
When $\varepsilon$ is sufficiently small, we split up the integral in question as
\begin{align}
    \int_{[0,\eta]} \frac{\varepsilon^2\,r^{d-1}}{\sqrt{r^2+\varepsilon^2}^3}\d r =\int_{[0,\varepsilon]} \frac{\varepsilon^2\,r^{d-1}}{\sqrt{r^2+\varepsilon^2}^3}\d r + \int_{[\varepsilon,\eta]} \frac{\varepsilon^2\,r^{d-1}}{\sqrt{r^2+\varepsilon^2}^3}\d r.
\end{align}
For the first summand, we use the inequality
\begin{align}
    \sqrt{r^2+\varepsilon^2}^3\geq \varepsilon^3
\end{align}
for every $r\in\R$ and obtain
\begin{align}
    \int_{[0,\varepsilon]} \frac{\varepsilon^2\,r^{d-1}}{\sqrt{r^2+\varepsilon^2}^3}\d r\leq \frac{1}{\varepsilon}\int_{[0,\varepsilon]}r^{d-1}\d r \lesssim \varepsilon^{d-1},
\end{align}
which converges to zero as $\varepsilon\to 0^+$ since $d$ is no less than two.
For the second summand, we employ the inequality
\begin{align}
    \sqrt{r^2+\varepsilon^2}^3\geq r^3
\end{align}
for every $r\in\R_+$ instead, which entails
\begin{align}
    \int_{[\varepsilon,\eta]} \frac{\varepsilon^2\,r^{d-1}}{\sqrt{r^2+\varepsilon^2}^3}\d r \leq \varepsilon^2\int_{[\varepsilon,\eta]} r^{d-4}\d r \lesssim_\eta  \begin{cases}
        \varepsilon & \textnormal{if }d=2,\\
        \varepsilon^2 + \varepsilon^2\,\big\vert\!\log\varepsilon\big\vert & \textnormal{if }d=3,\\
        \varepsilon^2 & \textnormal{otherwise}.
    \end{cases}
\end{align}
All these terms converge to zero as $\varepsilon\to 0^+$. This completes the proof.
\end{proof}

\subsection{Uncentered case, dimension one}

\begin{theorem}[Tanaka--Meyer formula for one-dimensional radial process]\label{Th:RadialOne} Let $d=1$. For every $x\in\R\setminus\{o\}$, there exist a standard $\smash{\scrF_\bullet}$-Brownian motion $B$ and a nonnegative, nondecreasing, and $\smash{\scrF_\bullet}$-adapted process $L^o$ with locally bounded variation such that
\begin{align}
    \rmd \rO(X_t) &= \sqrt{2a(t,\vert X_t-x\vert)}\d B_t + a_r(t,\vert X_t-x\vert)\sgn(X_t-x)\sgn(X_t-o)\d t  + \rmd L_t^o\\
    &=\sqrt{2a(t,\vert X_t-x\vert)}\d B_t + \sfL_{p,m}^x\rO(X_t)\d t + \rmd L_t^o\quad\PPP^{x}\textnormal{\textit{-a.s.}},
\end{align}
where $\smash{\sfL_{p,m}^x}$ is from \eqref{Eq:Generator}.
\end{theorem}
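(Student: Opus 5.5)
In dimension one the term $(d-1)/\rO_\varepsilon$ in \cref{Le:Itoeps1} vanishes. The Itô formula for $\rO_\varepsilon\circ X$ therefore reduces to the martingale part, the $a_r$-drift, and the single singular term
\begin{align}
    L^{o,\varepsilon}_t := \int_{[0,t]} a(s,\vert X_s-x\vert)\,\frac{\varepsilon^2}{\rO_\varepsilon(X_s)^3}\d s.
\end{align}
By \cref{Pr:Threeucp}, the remaining terms converge ucp to $\rO\circ X-\rO(x)-\int\sqrt{2a}\d B-\int a_r\sgn(X_s-x)\sgn(X_s-o)\d s$. Hence $L^{o,\varepsilon}$ converges ucp to
\begin{align}
    L^o := \rO\circ X-\rO(x)-\int_{[0,\bullet]}\sqrt{2a(s,\vert X_s-x\vert)}\d B_s-\int_{[0,\bullet]}a_r(s,\vert X_s-x\vert)\sgn(X_s-x)\sgn(X_s-o)\d s,
\end{align}
and this is how I will define $L^o$. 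With this definition the identity in the statement holds by construction, and what remains is to show that $L^o$ has the claimed properties.

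The properties follow from the approximations. Each $L^{o,\varepsilon}$ is continuous, adapted, nonnegative, nondecreasing and starts at $0$, since its integrand is nonnegative. Along a subsequence $\varepsilon_n\to0^+$, ucp convergence gives $\PPP^x$-a.s.\ locally uniform convergence. Adaptedness, continuity, nonnegativity and monotonicity pass to the limit, because each is preserved under pointwise limits. Monotonicity in turn yields locally bounded variation. Augmentation of the filtration lets us choose an adapted version of $L^o$. The finiteness of the drift integral $\int a_r\,\sgn\,\sgn\d s$ is already secured by \eqref{Eq:Integrability}, so $L^o$ is a finite process.

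For the second identity I need to compute $\sfL^x_{p,m}\rO$ away from $o$ for $d=1$. There $\rO$ is affine, so its second derivative vanishes and only the first-order term survives. Expanding \eqref{Eq:Generator} with $u=\rO$ and $\Delta\rO=0$ on $\R\setminus\{o\}$ gives
\begin{align}
    \sfL^x_{p,m}\rO = \nabla[a(t,\vert\cdot-x\vert)]\cdot\nabla\rO = a_r(t,\vert \cdot-x\vert)\sgn(\cdot-x)\sgn(\cdot-o).
\end{align}
This holds $\mathscr L^1\otimes\PPP^x$-a.e., since by \eqref{Eq:Occup} the process spends zero Lebesgue time at $o$, and also (by absolute continuity of the law) zero time at the nondifferentiability points $x$ and $\vert X_s-x\vert=R(s)$.

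I expect the main obstacle to be justifying that $L^o$ is genuinely a limit with these properties and is not merely defined by subtraction. One point is that ucp limits along a subsequence suffice, and this follows from \cref{Pr:Threeucp}. The other is the finiteness of $\EEE^x[L^o_t]$, which I will not need for this statement. Note that $\varepsilon^2/\rO_\varepsilon^3$ approximates $2\delta_o$, so the nontriviality of $L^o$ is deferred to later results.
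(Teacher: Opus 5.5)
Your proof is correct, but it takes a different route from the paper. The paper does not pass to the limit in the regularized Itô formula here. It notes that $X$ is a continuous semimartingale and applies the classical Tanaka formula (Revuz--Yor, Thm.~VI.1.2), which gives $\rmd\rO(X_t)=\sgn(X_t-o)\,\rmd X_t+\rmd L^o_t$ with $L^o$ the semimartingale local time. It then substitutes \eqref{Eq:ClosedMcKeanVlasov} and borrows from the proof of \cref{Pr:Threeucp} only the identification of $\int\sgn(X_s-o)\,\rmd W_s$ as a Brownian motion $B$.

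You instead define $L^o$ as the ucp-limit of the nonnegative, nondecreasing approximants $\int a\,\varepsilon^2/\rO_\varepsilon(X_s)^3\,\rmd s$, using all three convergences of \cref{Pr:Threeucp}. In effect you re-prove Tanaka's formula by smoothing in this concrete setting. What your route buys:
\begin{itemize}
\item It is self-contained and handles the one-dimensional uncentered case with the same regularization scheme as \cref{Th:CenteredRadial,Th:RadialTwo}.
\item It yields item \ref{La:v} of \cref{Th:Modif} directly; the paper derives that item afterwards by exactly your argument.
\item You justify the second identity $\sfL^x_{p,m}\rO=a_r\sgn(\cdot-x)\sgn(\cdot-o)$ more carefully than the paper, by discarding the $\Leb^1\otimes\PPP^x$-null set where $X_s\in\{o,x\}$ or $\vert X_s-x\vert=R(s)$.
\end{itemize}

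The paper's route is shorter and immediately identifies $L^o$ with the Revuz--Yor local time. That identification is what \cref{Th:Modif} and \cref{Pr:JumpsLocalTime} rely on: joint continuity in $o$, the occupation times formula, and support on $\{X=o\}$. With your definition it costs one extra line. Both processes equal $\rO\circ X-\rO(x)-\int_{[0,\bullet]}\sgn(X_s-o)\,\rmd X_s$, and the convention for $\sgn(0)$ is irrelevant by \eqref{Eq:Occup}.
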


The above process $L^o$ is precisely the local time associated with the semimartingale $\rO\circ X$, cf.~\cref{Cor:Semim}. In fact, the following proofs use classical one-dimensional theory of local times for semimartingales, see Revuz--Yor \cite{revuz-yor1999}*{§VI} for details.

\begin{proof}[Proof of \cref{Th:RadialOne}] By \eqref{Eq:ClosedMcKeanVlasov} and \eqref{Eq:Integrability}, $X$ is a continuous semimartingale. Tanaka's formula \cite{revuz-yor1999}*{Thm.~VI.1.2},  \eqref{Eq:ClosedMcKeanVlasov}, and the proof of \cref{Pr:Threeucp} imply that given $o\in\R$ there is a nonnegative, nondecreasing, and $\smash{\scrF_\bullet}$-adapted process $L^o$ with  locally bounded variation such that
\begin{align}
    \rmd \rO(X_t) &= \sgn(X_t-o)\d X_t +\rmd L_t^o\\
    &= \sqrt{2a(t,\vert X_t-x\vert)}\d B_t + a_r(t,\vert X_t-x\vert)\sgn(X_t-x)\sgn(X_t-o)\d t\\
    &\qquad\qquad+\rmd L_t^o \quad\PPP^{x}\textnormal{-a.s.}
\end{align}
This is the desired statement.
\end{proof}

In the sequel, we will fix a modification of the local time as follows.

\begin{theorem}[Properties and nontriviality of local time]\label{Th:Modif} Let $d=1$. For every $x\in\R$, any process $L$ as appearing in \cref{Th:RadialOne} admits a nonrelabeled modification that obeys the following properties for every $o\in\R$.
\begin{enumerate}[label=\textnormal{\textcolor{black}{(}\roman*\textcolor{black}{)}}]
    \item\label{La:i} Outside a $\PPP^{x}$-negligible set, $L_t^o$ depends jointly continuously on $\smash{(t,o)\in\R_+\times\R}$.
    \item\label{La:iv} There exists a $\smash{\PPP^{x}}$-negligible set outside which for every $t\in\R_+$, we have the asymptotic occupation time  identity
    \begin{align}
        L_t^o &=\lim_{\varepsilon\to 0^+}\frac{2}{\varepsilon}\int_{[0,t]}1_{\{o\leq X_s\leq o+\varepsilon\}}\,a(s,\vert X_s-x\vert)\d s.
    \end{align}
    \item\label{La:v} We have
    \begin{align}
        L^o = \PPP^{x}\textnormal{-ucp-}\!\!\!\lim_{\varepsilon\to 0^+}\int_{[0,\bullet]} a(s,\vert X_s-x\vert)\,\frac{\varepsilon^2}{\rO_\varepsilon(X_s)^3}\d s.
    \end{align}
    \item\label{La:vi} For every $t\in \R_+$,
    \begin{align}
        \EEE^{x}\big[L_t^o\big] = 2\int_{[0,t]} a(s,\vert o-x\vert)\,U(s,\vert o-x\vert)\d s.
    \end{align}
\end{enumerate}
\end{theorem}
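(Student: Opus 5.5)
Since $d=1$ and $X$ is a continuous semimartingale by \eqref{Eq:ClosedMcKeanVlasov} and \eqref{Eq:Integrability}, I will take the local times $(L^o_t)$ of $X$ from the classical one-dimensional theory of Revuz--Yor \cite{revuz-yor1999}*{§VI}. Tanaka's formula gives $\rO(X_t)=\rO(x)+\int_{[0,t]}\sgn(X_s-o)\d X_s+L^o_t$. Comparing with \cref{Th:RadialOne} shows that any process $L$ appearing there agrees $\PPP^x$-a.s.\ with this semimartingale local time for each fixed $o$. Hence it suffices to choose the Revuz--Yor modification, which is continuous in $t$ and càdlàg in $o$ in general.

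For \ref{La:i}, the discontinuity of $o\mapsto L^o_t$ is $L^o_t-L^{o-}_t=2\int_{[0,t]}1_{\{X_s=o\}}\d V_s$, where $V$ is the finite variation part $\int_{[0,\bullet]} a_r(s,\vert X_s-x\vert)\sgn(X_s-x)\d s$. This part is absolutely continuous in $s$, and $\{X_s=o\}$ is $\Leb^1\otimes\PPP^x$-negligible by \eqref{Eq:Occup}. So the jump vanishes a.s.\ simultaneously for all $o$; I will pass from fixed $o$ to all $o$ by Fubini in $o$, using that the jump measure is dominated by $\int\vert\d V\vert$. This yields joint continuity. For \ref{La:iv}, I will apply the occupation times formula $\int_{[0,t]} f(X_s)\d[X]_s=\int f(y)L^y_t\d y$ with $\rmd[X]_s=2a(s,\vert X_s-x\vert)\d s$ and $f=\varepsilon^{-1}1_{[o,o+\varepsilon]}$. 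Right-continuity of $y\mapsto L^y_t$ then gives the limit for all $t$ and $o$ off one null set.

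For \ref{La:v}, I will use that $\varepsilon^2/\rO_\varepsilon(y)^3=\varepsilon^2/(\vert y-o\vert^2+\varepsilon^2)^{3/2}$ equals $\tfrac12\phi_\varepsilon(y-o)$ with $\phi_\varepsilon$ an approximate identity of total mass $2$. By the occupation formula,
\begin{align}
\int_{[0,t]} a(s,\vert X_s-x\vert)\frac{\varepsilon^2}{\rO_\varepsilon(X_s)^3}\d s=\frac12\int_{\R}\frac{\varepsilon^2}{(\vert y-o\vert^2+\varepsilon^2)^{3/2}}\,L^y_t\d y.
\end{align}
Joint continuity from \ref{La:i}, uniform on $[0,b]\times$ compacts, then gives uniform convergence on $[0,b]$ to $L^o_t$ almost surely, hence ucp. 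The integrability of the kernel tails needs care; I will bound them by $\sup_y L^y_b<\infty$, which is finite because $L^y_b=0$ outside the range of $X$ on $[0,b]$. Alternatively, \ref{La:v} follows directly by comparing \cref{Le:Itoeps1} with \cref{Pr:Threeucp} and \cref{Th:RadialOne}: since the $(d-1)$-term vanishes, the remaining term must converge ucp to $L^o$.

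For \ref{La:vi}, I will take expectations in \ref{La:v} or \ref{La:iv}. Since the marginal density is $U(s,\vert\cdot-x\vert)$,
\begin{align}
\EEE^x\Big[\int_{[0,t]} a(s,\vert X_s-x\vert)\frac{\varepsilon^2}{\rO_\varepsilon(X_s)^3}\d s\Big]=\int_{[0,t]}\int_\R a(s,\vert y-x\vert)U(s,\vert y-x\vert)\frac{\varepsilon^2}{(\vert y-o\vert^2+\varepsilon^2)^{3/2}}\d y\d s,
\end{align}
which tends to $2\int_{[0,t]}(aU)(s,\vert o-x\vert)\d s$. Here I use continuity of $y\mapsto aU(s,\vert y-x\vert)$ (note $p>2$ when $d=1$) and the uniform bound from \cref{Le:aU-bounds-m}, $aU\lesssim s^{-1-1/\beta+p/(p-1)\beta}R(s)^{(p-2)/(p-1)}\asymp s^{-1+1/\beta}$, which is integrable. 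The main obstacle is upgrading ucp convergence to convergence of expectations. I would first try uniform integrability of $\{\int_{[0,t]}\cdots\d s\}_\varepsilon$ via an $L^2$ bound, or alternatively take expectations in Tanaka's formula: $\EEE^x[L^o_t]=\EEE^x[\rO(X_t)]-\rO(x)-\EEE^x[\int_{[0,t]} a_r\sgn\sgn\d s]$, where all terms are explicit integrals against the Barenblatt density. Computing these, with an integration by parts in $y$, yields the boundary term $2aU$ at $y=o$. I expect this last route to be the cleanest and will use it as the fallback.
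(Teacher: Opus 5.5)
Your setup, your proof of (ii), and your alternative route to (iii) via \cref{Le:Itoeps1,Pr:Threeucp,Th:RadialOne} coincide with the paper's. Your main route to (iii), through the occupation times formula, is a valid alternative and even gives a.s.\ uniform convergence on $[0,b]$. (Your description of the kernel is off by a factor: $\varepsilon^2/(\vert y-o\vert^2+\varepsilon^2)^{3/2}$ itself has mass $2$, and the factor $\tfrac12$ from $\rmd[X]_s=2a\d s$ normalizes it; your display is correct.)

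The genuine gap is in (i), and your main route to (iii) depends on it, since a symmetric kernel only recovers $\tfrac12(L^o_t+L^{o-}_t)$. You are right that Revuz--Yor's Thm.~VI.1.7 only gives c\`adl\`ag dependence on $o$, with jump $2\int_{[0,t]}1_{\{X_s=o\}}\d V_s$; the paper's proof leaves this implicit. However, ``passing from fixed $o$ to all $o$ by Fubini in $o$'' fails.
\begin{itemize}
\item \eqref{Eq:Occup} removes the jump only outside an $o$-dependent null set.
\item For fixed $\omega$, the jumps are summable (bounded by $2\int_{[0,t]}\vert\rmd V_s\vert$), hence nonzero for at most countably many $o$. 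That set is $\Leb^1$-null anyway, so any Fubini argument in $o$ only yields the vacuous ``for a.e.~$o$''.
\item One-dimensional marginals alone also cannot rule out that $X$ lingers at a random level.
\end{itemize}
You need a statement valid for all $o$ outside \emph{one} null set. The occupation times formula with $\Phi=1_{\{o\}}$ gives, a.s.\ for all $o$ and $t$,
\[
\int_{[0,t]}1_{\{X_s=o\}}\,a(s,\vert X_s-x\vert)\d s=\tfrac12\int_\R 1_{\{o\}}(y)\,L^y_t\d y=0 .
\]
The $o$-independent time set $\{s: a(s,\vert X_s-x\vert)=0\}$ is contained in $\{s: X_s=x\}\cup\{s:\vert X_s-x\vert\geq R(s)\}$. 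This set is a.s.\ $\Leb^1$-null by Fubini, since each $\Law\,X_s$ is absolutely continuous and supported in $[x-R(s),x+R(s)]$. Hence, a.s., $X$ spends zero Lebesgue time at every level simultaneously, so all jumps vanish. Joint continuity then follows by dominated convergence in the finite-variation part $\int_{[0,t]}1_{\{X_s>o\}}\d V_s$ of Revuz--Yor's representation.

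For (iv) your route differs from the paper's. The paper integrates the occupation times formula against an arbitrary $\varphi$, takes expectations, and concludes from continuity in $o$ of both sides (\cref{Le:CtyG} for the right-hand side). Your evaluation of the limit of the expected approximants is correct. But you leave the decisive convergence of expectations open, and your fallback, as described, would additionally need the equation $\partial_sU=\partial_y(a\,\partial_yU)$ to evaluate $\EEE^x[\rO(X_t)]-\rO(x)$.

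The step closes without uniform integrability or any PDE. Take expectations in \cref{Le:Itoeps1} for $d=1$; the martingale term is centered by \eqref{Eq:Integrability}, so
\begin{align*}
\EEE^x\Big[\int_{[0,t]} a(s,\vert X_s-x\vert)\,\frac{\varepsilon^2}{\rO_\varepsilon(X_s)^3}\d s\Big] &= \EEE^x\big[\rO_\varepsilon(X_t)\big]-\rO_\varepsilon(x)\\
&\qquad - \EEE^x\Big[\int_{[0,t]} a_r(s,\vert X_s-x\vert)\sgn(X_s-x)\,\frac{X_s-o}{\rO_\varepsilon(X_s)}\d s\Big].
\end{align*}
By dominated convergence (using $\vert\rO_\varepsilon-\rO\vert\leq\varepsilon$, \eqref{Eq:Integrability}, and \eqref{Eq:Occup}), the right-hand side tends to
\[
\EEE^x[\rO(X_t)]-\rO(x)-\EEE^x\Big[\int_{[0,t]}a_r(s,\vert X_s-x\vert)\sgn(X_s-x)\sgn(X_s-o)\d s\Big],
\]
which equals $\EEE^x[L^o_t]$ by taking expectations in Tanaka's formula. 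Compared with the paper, this route does not need continuity of $o\mapsto\EEE^x[L^o_t]$, at the price of an extra $\varepsilon$-limit.
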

\begin{proof} By \cref{Th:RadialOne} and Revuz--Yor \cite{revuz-yor1999}*{Thm.~VI.1.7}, $L$ has a modification obeying \ref{La:i}, which we  fix in the remainder of this proof.

The asymptotic occupation time identity \ref{La:iv} is a consequence of \cite{revuz-yor1999}*{Cor.~VI.1.9}.

By the above Itô expansion of $\rO\circ X$, \cref{Pr:Threeucp}, \cref{Le:Itoeps1}, and \cref{Cor:Itoeps}, we necessarily have \ref{La:v}.

It remains to establish \ref{La:vi}. Thanks to the occupation time formula \cite{revuz-yor1999}*{Cor.~VI.1.6} and \eqref{Eq:ClosedMcKeanVlasov}, there exists a $\smash{\PPP^{x}}$-negligible set outside which every $t\in\R_+$ and every Borel measurable function $\varphi\colon \R\to\R_+$ satisfy
\begin{align}
    \int_{\R}\varphi(o)\,L_t^o\d o = \int_{[0,t]}\varphi(X_s)\d [X,X]_s = 2\int_{[0,t]}\varphi(X_s)\,a(s,\vert X_s-x\vert)\d s.
\end{align}
Taking expectations and using Fubini's theorem,
\begin{align}
    \int_{\R}\varphi(o)\,\EEE^{x}\big[L_t^o\big]\d o &= 2\int_{[0,t]} \EEE^{x}\big[\varphi(X_s)\,a(s,\vert X_s-x\vert)\big]\d s\\
    &= 2 \int_{[0,t]} \int_{\R}\varphi(o)\,a(s,\vert o-x\vert)\,U(s,\vert o-x\vert)\d o\d s\\
    &= 2 \int_{\R}\varphi(o) \int_{[0,t]} a(s,\vert o-x\vert)\,U(s,\vert o-x\vert)\d s\d o.
\end{align}
By \ref{La:i}, the expectation $\smash{\EEE^{x}[L_t^o]}$ depends continuously on $o\in\R$; \cref{Le:CtyG} below shows the inner integral on the right-hand side depends continuously on $o\in\R$. Therefore,  the claim \ref{La:vi} follows from the arbitrariness of $\varphi$.
\end{proof}

\begin{lemma}[Continuity]\label{Le:CtyG} Let $d=1$ and $t\in\R_+$. Define $G\colon\R_+\to\R_+$ by
\begin{align}
    G(r) := \int_{[0,t]} a(s,r)\,U(s,r)\d s.
\end{align}
Then $G$ is continuous.
\end{lemma}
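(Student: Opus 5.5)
Since $d=1$, the computation in the proof of \cref{Le:aU-bounds-m} gives an explicit formula for the integrand: for $s>0$ and $r<R(s)$,
\begin{align}
a(s,r)\,U(s,r) = c\, s^{-1-1/\beta+p/(p-1)\beta}\, r^{(p-2)/(p-1)}\, S(s,r)^{\gamma+1},
\end{align}
with a constant $c>0$. For $r\geq R(s)$ the product vanishes, because $U(s,r)=0$ there. Set $g(s,r):=a(s,r)\,U(s,r)$. The plan is to prove continuity of $G$ by dominated convergence. This needs two inputs: continuity of $r\mapsto g(s,r)$ for each fixed $s>0$, and an $s$-integrable dominant that is uniform in $r$.

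\emph{Step 1 (pointwise continuity).} Fix $s>0$. On $[0,R(s))$ the map $r\mapsto g(s,r)$ is continuous, since $r^{(p-2)/(p-1)}$ is continuous on $\R_+$. This uses $p>2$, which follows from \eqref{Eq:ranges} when $d=1$, so the exponent is positive and the function is continuous at $r=0$ with value $0$. Continuity of $S$ is clear. At $r=R(s)$ we have $S(s,R(s))=0$ and $\gamma+1>0$, so $S_+^{\gamma+1}$ is continuous across $R(s)$ and $g(s,\cdot)$ extends by $0$ continuously. Hence $g(s,\cdot)$ is continuous on all of $\R_+$.

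\emph{Step 2 (domination).} By \cref{Le:aU-bounds-m}\ref{La:Eins}, and using $r\leq R(s)$ on the support,
\begin{align}
g(s,r)\lesssim s^{-1-1/\beta+p/(p-1)\beta}\,R(s)^{(p-2)/(p-1)} \asymp s^{-1-1/\beta+p/(p-1)\beta+(p-2)/((p-1)\beta)} = s^{-1+1/\beta}.
\end{align}
The exponents combine as $-1+\bigl(-1+p/(p-1)+(p-2)/(p-1)\bigr)/\beta = -1+1/\beta$ because $(2p-2)/(p-1)-1=1$. This bound is uniform in $r\in\R_+$ and integrable on $[0,t]$, since $1/\beta>0$.

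\emph{Step 3 (conclusion).} Let $r_n\to r$ in $\R_+$. Then $g(s,r_n)\to g(s,r)$ for every $s\in(0,t]$ by Step 1. The functions are dominated by $s^{-1+1/\beta}\in L^1([0,t])$ by Step 2. Dominated convergence gives $G(r_n)\to G(r)$. The case $t=0$ is trivial.

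The only delicate point is the boundary behaviour, at $r=0$ and at $r=R(s)$. Both are handled by the positivity of the exponents $(p-2)/(p-1)$ and $\gamma+1$. The domination itself is straightforward once the powers of $s$ are matched through $R(s)=\rho\,s^{1/\beta}$.
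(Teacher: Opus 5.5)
Your proof is correct and follows the paper's route: the bound from \cref{Le:aU-bounds-m} gives an $s$-integrable dominant on $[0,t]$, and Lebesgue's dominated convergence theorem yields continuity of $G$. The differences are minor refinements. You use $r\leq R(s)$ to get the dominant $s^{-1+1/\beta}$, uniform in all $r\in\R_+$, whereas the paper uses the locally uniform dominant $s^{-1-1/\beta+p/((p-1)\beta)}\,r^{(p-2)/(p-1)}$; and you check pointwise continuity at $r=0$ and $r=R(s)$ explicitly, which the paper leaves implicit.
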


\begin{proof} Recall from \cref{Le:aU-bounds-m} that for every $s\in (0,t]$ and every $r\in\R_+$,
\begin{align}
    a(s,r)\,U(s,r) \lesssim s^{-1-1/\beta+p/(p-1)\beta}\,r^{(p-2)/(p-1)}.
\end{align}
By our hypothesis \eqref{Eq:ranges}, we necessarily have $p>2$ in one dimension. In turn, since
\begin{align}
    -1-\frac{1}{\beta}+\frac{p}{(p-1)\beta} = -1 - \frac{1}{\beta} + \frac{p-2}{(p-1)\beta} + \frac{2}{(p-1)\beta} >-1
\end{align}
thanks to \eqref{Eq:Expd2} and since the above exponent $(p-2)/(p-1)$ is positive, we have found a dominating function for $a(\cdot,r)\,U(\cdot,r)$ that is locally uniformly bounded in $r\in\R_+$. The claim follows from Lebesgue's dominated convergence theorem.
\end{proof}

In particular, this lemma and \cref{Th:Modif} confirm the triviality of the local time in \cref{Th:CenteredRadial} in the one-dimensional case because for every $t\in\R_+$,
\begin{align}\label{Eq:Contrast}
    \EEE^{o}\big[L_t^o\big] = 2\int_{[0,t]} a(s,0)\,U(s,0)\d s =0.
\end{align}
This vanishing manifests a classical mechanism: by the occupation time formula, the local time at a point where the diffusion coefficient degenerates continuously is necessarily trivial; systematic criteria for one-dimensional stochastic differential equations with such singular points go back to Engelbert--Schmidt and are surveyed by Cherny--Engelbert \cite{cherny-engelbert2005}. Specific to our setting is the precise rate $\smash{r^{(p-2)/(p-1)}}$ at which the diffusivity degenerates at the center, cf.~\cref{Le:aU-bounds-m}, which is positive in dimension one by \eqref{Eq:ranges}.

In turn, \cref{Le:aU-bounds-m} immediately gives the following.

\begin{corollary}[Asymptotics for expected local time]\label{Cor:ExpLocalTime} Let $d=1$. Then for every $\theta>1$, every $x,o\in\R$, and every $\smash{t\in [\theta R^{-1}(\vert o-x\vert),\infty)}$, the local time $L^o$ satisfies
\begin{align}
    t^{-1/\beta +p/(p-1)\beta}\,\vert o-x\vert^{(p-2)/(p-1)} \lesssim_\theta
    \EEE^{x}\big[L_t^o\big]
    \lesssim t^{-1/\beta +p/(p-1)\beta}\,\vert o-x\vert^{(p-2)/(p-1)};
\end{align}
the upper bound holds in fact for every $t\in\R_+$.
\end{corollary}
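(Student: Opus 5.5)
The plan is to start from the explicit formula of \cref{Th:Modif}\ref{La:vi}, namely $\EEE^x[L_t^o]=2\int_{[0,t]} a(s,r)\,U(s,r)\d s$ with $r:=\vert o-x\vert$, and to bound the integrand by \cref{Le:aU-bounds-m} with $d=1$. If $r=0$, both sides vanish by \eqref{Eq:Contrast}, since $(p-2)/(p-1)>0$ in dimension one under \eqref{Eq:ranges}. So we assume $r>0$. The integrand is zero unless $r<R(s)$, that is, unless $s>R^{-1}(r)$.

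For the upper bound, \cref{Le:aU-bounds-m}\ref{La:Eins} gives
\begin{align}
a(s,r)\,U(s,r)\lesssim s^{-1-1/\beta+p/(p-1)\beta}\,r^{(p-2)/(p-1)}.
\end{align}
The exponent $e:=-1/\beta+p/(p-1)\beta$ satisfies $e=(p/(p-1)-1)/\beta=1/((p-1)\beta)>0$. Hence the $s$-integral over $[0,t]$ converges and is comparable to $t^{e}$. This yields the upper bound for every $t\in\R_+$, with a constant depending only on $p,m,d$.

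For the lower bound, fix $\theta>1$ and $t\geq\theta R^{-1}(r)$. We restrict the integral to $s\in[\theta^{1/2}R^{-1}(r),t]$ (alternatively one could use $[t/\theta^{1/2},t]$). On this range $R(s)\geq\theta^{1/(2\beta)}r$, so $r\leq\lambda R(s)$ with $\lambda:=\theta^{-1/(2\beta)}\in(0,1)$. Then \cref{Le:aU-bounds-m}\ref{La:Zwei} gives
\begin{align}
a(s,r)\,U(s,r)\geq c_\lambda^{\gamma+1}\,s^{-1-1/\beta+p/(p-1)\beta}\,r^{(p-2)/(p-1)}.
\end{align}
Integrating, we obtain a lower bound of order $t^{e}-(\theta^{1/2}R^{-1}(r))^{e}$. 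Since $\theta^{1/2}R^{-1}(r)\leq\theta^{-1/2}t$, this is at least $(1-\theta^{-e/2})\,t^{e}$, which is $\gtrsim_\theta t^e$.

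The only delicate step is choosing the intermediate cutoff so that the lower bound of \cref{Le:aU-bounds-m}\ref{La:Zwei} applies uniformly while a fixed proportion of $[0,t]$ is retained. The square-root split of $\theta$ handles both requirements, and the remaining work is routine bookkeeping of exponents.
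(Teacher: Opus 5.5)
Your proof is correct and follows the paper's route: the paper presents the corollary as an immediate consequence of the expectation formula in \cref{Th:Modif}\ref{La:vi} combined with the bounds on $a\,U$ from \cref{Le:aU-bounds-m}. Your cutoff at $s=\theta^{1/2}R^{-1}(\vert o-x\vert)$, which makes \cref{Le:aU-bounds-m}\ref{La:Zwei} applicable with $\lambda=\theta^{-1/(2\beta)}$ while keeping a $\theta$-dependent fraction of $t^{1/((p-1)\beta)}$, is exactly the bookkeeping the paper leaves implicit; the equality $r=\lambda R(s)$ occurs only at that single endpoint, so the open-interval hypothesis in \cref{Le:aU-bounds-m}\ref{La:Zwei} causes no trouble.
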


On the other hand, if $R(t)\leq \vert o-x\vert$ above, then clearly
\begin{align*}
    \EEE^x\big[L_t^o\big]=0
\end{align*}
since in this range of $t$, the process $X$ starting at $x$ cannot have hit $o$ yet. Since $\smash{\EEE^x[L_t^o]}$ depends continuously on $\smash{t\in\R_+}$, no lower bound as in \cref{Cor:ExpLocalTime} can hold uniformly for times near $\smash{R^{-1}(\vert o-x\vert)}$; this explains the constraint $\smash{t\geq\theta R^{-1}(\vert o-x\vert)}$.

As the local time $L^o$ from \cref{Th:RadialOne} is $\PPP^{x}$-a.s.~continuous and nondecreasing in time for every $x\in\R\setminus \{o\}$, it is the primitive of a uniquely determined nontrivial random measure $\smash{\mu^o}$ on $\R_+$. Applying Revuz--Yor \cite{revuz-yor1999}*{Prop.~VI.1.3} then implies the following.

\begin{proposition}[Jumps of local time]\label{Pr:JumpsLocalTime} For every $x\in\R\setminus\{o\}$, the random measure $\smash{\mu^o}$ is $\PPP^{x}$-a.s.~concentrated on $\{t\in\R_+: X_t=o\}$.
\end{proposition}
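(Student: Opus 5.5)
The claim is the one-dimensional statement that local time charges only the zero set of the semimartingale. I would apply it directly to the continuous semimartingale $\rO\circ X$ rather than to $X$.

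First, I would note that in \cref{Th:RadialOne}, $L^o$ is the Tanaka local time of $X$ at level $o$. By Tanaka's formula, $\rmd\vert X_t-o\vert=\sgn(X_t-o)\d X_t+\rmd L^o_t$, so $L^o$ coincides with the local time at $0$ of the continuous semimartingale $X-o$. It is therefore covered by Revuz--Yor \cite{revuz-yor1999}*{Prop.~VI.1.3}. That result states that, almost surely, the measure $\rmd L^o_t$ is carried by $\{t: X_t=o\}$. This requires only that $X$ be a continuous semimartingale, which was already observed in the proof of \cref{Th:RadialOne} using \eqref{Eq:ClosedMcKeanVlasov} and \eqref{Eq:Integrability}.

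Second, I would identify the random measure $\mu^o$ with $\rmd L^o_t$. The modification fixed in \cref{Th:Modif} is continuous and nondecreasing on a set of full $\PPP^x$-measure. Passing to a modification changes $L^o$ only on a null set, and by continuity the two processes are then indistinguishable. So the support statement transfers to the chosen version.

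Third, I would handle the possible detail of equality versus almost-sure equality. The event $\{\mu^o(\{t: X_t\neq o\})>0\}$ is contained in the exceptional set of the cited proposition, hence it is $\PPP^x$-negligible.

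Nontriviality of $\mu^o$ is not part of the claim, although \cref{Th:Modif}\ref{La:vi} shows $\EEE^x[L^o_t]>0$ for large $t$.

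I expect no step to be a real obstacle. The only point needing care is checking that the local time in \cref{Th:RadialOne} is the standard semimartingale local time in the normalization Revuz--Yor use, which is the symmetric versus right-continuous convention. Both conventions give local times supported on the zero set, so this does not affect the conclusion.
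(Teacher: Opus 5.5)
Your proposal is correct and is essentially the paper's argument: the paper likewise notes that the Tanaka local time $L^o$ from \cref{Th:RadialOne} is a.s.\ continuous and nondecreasing, hence the primitive of $\mu^o$, and then invokes Revuz--Yor, Prop.~VI.1.3 for the continuous semimartingale $X$ at level $o$. The only blemish is cosmetic: your opening sentence says you will apply the result to $\rO\circ X$, but you actually (and rightly) apply it to $X-o$ at level $0$; since $\{t:\rO(X_t)=0\}=\{t:X_t=o\}$, either reading gives the claim.
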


\subsection{Summary and comparison with Euclidean Brownian motion} We  now provide a compact representation of the Tanaka--Meyer semi\-martingale formula of the radial process $\smash{\rO\circ X}$ we have established above.

\begin{theorem}[Tanaka--Meyer formula for radial process]\label{Th:Summary} Given an initial point $\smash{x\in\R^d}$, define $\smash{\widetilde{L}\colon \R_+\times \R^d\times\Omega\to\R_+}$ by
\begin{align*}
    \widetilde{L}_t^o(\omega):= \begin{cases}
        0 & \textnormal{\textit{if} }d \geq 2 \textnormal{ \textit{or} }o=x,\\
        \textnormal{\textit{the modification from \cref{Th:Modif}}} & \textnormal{\textit{otherwise}}.
    \end{cases}
\end{align*}
Then there exists a standard $\scrF_\bullet$-Brownian motion $B$ such that
\begin{align*}
    \rmd \rO(X_t) &= \sqrt{2a(t,\vert X_t-x\vert)}\d B_t + a(t,\vert X_t-x\vert)\,\frac{d-1}{\rO(X_t)}\d t\\
    &\qquad\qquad + a_r(t,\vert X_t-x\vert)\sgn(X_t-x)\cdot\sgn(X_t-o)\d t + \rmd\widetilde{L}_t^o\quad\PPP^x\textnormal{\textit{-a.s.}}
\end{align*}
\end{theorem}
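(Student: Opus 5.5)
The plan is to assemble \cref{Th:Summary} from the three Tanaka--Meyer formulas already proven, via a case distinction on $d$ and on whether $x=o$. In each case we check that the general formula collapses to the one proven earlier. The driving Brownian motion is in all cases the process $B$ from \eqref{Eq:Bdef}, $B_t=\int_{[0,t]}\sgn(X_s-o)\cdot\rmd W_s$.

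\emph{Case $x=o$, arbitrary $d$.} Here $\vert X_t-x\vert=\rO(X_t)$ and $\sgn(X_t-x)\cdot\sgn(X_t-o)=\vert\sgn(X_t-o)\vert^2$. The latter equals $1$ for $(\Leb^1\mres\R_+)\otimes\PPP^o$-a.e.\ $(t,\omega)$ by the occupation identity \eqref{Eq:Occup}. Changing the integrand of a $\d t$-integral on a $\d t\otimes\PPP^o$-null set does not alter the integral $\PPP^o$-a.s., by Fubini. Hence the claimed formula with $\widetilde L\equiv0$ reduces exactly to \cref{Th:CenteredRadial}.

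In dimension one, the term $a\,(d-1)/\rO$ vanishes, so this is consistent with setting $\widetilde L^o=0$ when $o=x$. One could additionally note that the modification from \cref{Th:Modif} has expectation zero by \eqref{Eq:Contrast} and hence vanishes a.s. This is not needed, because the definition of $\widetilde L$ sets it to $0$ directly in this case.

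\emph{Case $x\neq o$, $d\geq2$.} With $\widetilde L\equiv0$, the formula is verbatim the first identity of \cref{Th:RadialTwo}.

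\emph{Case $x\neq o$, $d=1$.} The term $a\,(d-1)/\rO$ vanishes identically, and $\sgn(X_t-x)\cdot\sgn(X_t-o)$ is the product of scalar signs. The formula is therefore that of \cref{Th:RadialOne}, with $L^o$ replaced by its modification from \cref{Th:Modif}. A modification of a continuous process that is itself continuous (by \cref{Th:Modif}\ref{La:i}) is indistinguishable from the original. Hence $\rmd\widetilde L^o_t=\rmd L^o_t$ holds $\PPP^x$-a.s.\ as processes, and the identity persists.

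The only point requiring care is this last indistinguishability step, which needs continuity of both versions in $t$; the original $L^o$ from Tanaka's formula is continuous. The step replacing $\vert\sgn\vert^2$ by $1$ needs only \eqref{Eq:Occup}. No genuine obstacle is expected, since the statement is a compact repackaging of \cref{Th:CenteredRadial,Th:RadialTwo,Th:RadialOne,Th:Modif}.
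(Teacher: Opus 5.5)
Your proposal is correct and follows the paper's route. The paper states \cref{Th:Summary} without a separate proof, as a compact repackaging of \cref{Th:CenteredRadial,Th:RadialTwo,Th:RadialOne,Th:Modif} via exactly your case distinction, and your two bookkeeping steps are what that repackaging implicitly needs: replacing $\vert\sgn(X_t-o)\vert^2$ by $1$ off the null set from \eqref{Eq:Occup}, and passing to the continuous modification by indistinguishability.
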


\begin{corollary}[Semimartingale property of radial process]\label{Cor:Semim} For every $\smash{x\in\R^d}$, the radial process $\rO\circ X$ is a semimartingale under $\PPP^{x}$.
\end{corollary}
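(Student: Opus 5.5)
The plan is to read off the semimartingale decomposition of $\rO\circ X$ directly from \cref{Th:Summary}, which is already established. I will then check that each of its terms is of the right type, namely a continuous local martingale or a continuous adapted process of locally bounded variation. I will treat the cases exactly as \cref{Th:Summary} does. The centered case $x=o$ (any $d$) is covered by \cref{Th:CenteredRadial}. The uncentered case with $d\geq 2$ is covered by \cref{Th:RadialTwo}. The uncentered case with $d=1$ is covered by \cref{Th:RadialOne}.

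\textbf{Martingale part.} This is $M_t:=\int_{[0,t]}\sqrt{2a(s,\vert X_s-x\vert)}\d B_s$, where $B$ is the standard $\scrF_\bullet$-Brownian motion from \eqref{Eq:Bdef}. By \eqref{Eq:Integrability} we have $\EEE^x[\int_{[0,t]}a(s,\vert X_s-x\vert)\d s]<\infty$ for every $t$. Hence, by Itô's isometry, $M$ is a continuous square-integrable $\scrF_\bullet$-martingale.

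\textbf{Finite variation part.} This is
\begin{align*}
A_t:=\int_{[0,t]}\Big[a(s,\vert X_s-x\vert)\,\frac{d-1}{\rO(X_s)}+a_r(s,\vert X_s-x\vert)\sgn(X_s-x)\cdot\sgn(X_s-o)\Big]\d s+\widetilde{L}_t^o.
\end{align*}
The $a_r$ term is an absolutely continuous process. It is $\PPP^x$-a.s.\ finite because of the second bound in \eqref{Eq:Integrability} and $\vert\sgn(X_s-x)\cdot\sgn(X_s-o)\vert\leq 1$.

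The term involving $(d-1)/\rO$ vanishes when $d=1$. For $d\geq2$, I will use the integrability of $a(s,\vert X_s-x\vert)/\rO(X_s)$ in $L^1(\PPP^x\otimes\Leb^1\mres[0,t])$. In the centered case this was shown in \eqref{Eq:SecondExp}--\eqref{Eq:Larger!}. In the uncentered case it was shown in Part A.II of the proof of \cref{Th:RadialTwo}, combined with the trivial bound $2/\eta$ on $\{\rO(X_s)\geq\eta\}$ together with \eqref{Eq:Integrability}. Therefore this integral is a.s.\ finite, continuous and adapted, with finite variation on compacts.

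Finally, $\widetilde{L}^o$ is either zero or, by \cref{Th:RadialOne,Th:Modif}, continuous, adapted and nondecreasing. In either case it has locally bounded variation.

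\textbf{Conclusion.} Writing $\rO(X_t)=\rO(x)+M_t+A_t$ $\PPP^x$-a.s.\ for all $t$ then exhibits $\rO\circ X$ as a continuous $\scrF_\bullet$-semimartingale under $\PPP^x$. In dimension one there is also a shortcut that bypasses everything above: $\rO$ is convex and $X$ is a continuous semimartingale, so the result also follows from the Tanaka formula directly.

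There is no genuine obstacle here. The only point needing care is confirming that the $1/\rO$ drift is a.s.\ locally integrable near the center, and that estimate has already been supplied in the earlier proofs.
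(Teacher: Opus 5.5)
Your proposal is correct and follows the paper's route: the paper states the corollary as an immediate consequence of the Tanaka--Meyer decomposition in \cref{Th:Summary}, and you make explicit that each term is either a continuous martingale or a continuous adapted process of locally bounded variation. For this you cite exactly the integrability estimates the paper's proofs already supply, namely \eqref{Eq:Integrability}, \eqref{Eq:SecondExp}--\eqref{Eq:Larger!}, and Part A.II of the proof of \cref{Th:RadialTwo}. As an aside, your convexity shortcut is not limited to $d=1$: $\rO$ is convex on $\R^d$, and convex functions of continuous $\R^d$-valued semimartingales are semimartingales, so the qualitative statement holds even without the explicit drift estimates.
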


The above theorem exhibits analogies but also remarkable differences to properties of radial processes of Euclidean Brownian motion we briefly recall now.

\begin{remark}[Comparison with Euclidean Brownian motion]\label{Re:BMLocal} Let $W$ be a standard $d$-dimensional Brownian motion (on a stochastic basis filtered by $\scrF_\bullet$) and let $\PPP^o$ be the path law of $\smash{X := o+\sqrt{2}\,W}$. If $d$ is no less than two, it is a standard fact in stochastic calculus, cf.~e.g.~Revuz--Yor \cite{revuz-yor1999}*{§§VI.3, XI.1}, to verify the radial process $\rO\circ X$ is a rescaled Bessel process: there is a standard $\scrF_\bullet$-Brownian motion $B$ on $\R$ satisfying
\begin{align}
    \rmd B_t = \sgn(X_t-o)\cdot \rmd W_t\quad\PPP^{o}\textnormal{-a.s.}
\end{align}
such that
\begin{align}
    \rmd \rO(X_t) = \sqrt{2}\,\rmd B_t +\frac{d-1}{\rO(X_t)}\d t = \sqrt{2}\,\rmd B_t +\Delta \rO(X_t)\d t\quad \PPP^{o}\textnormal{-a.s.}
\end{align}
This is the formal version of \cref{Th:CenteredRadial} for Euclidean Brownian motion, which we do not include in our hypothesis \eqref{Eq:ranges}. In particular, local time does not occur in dimensions at least two, both for the process we study and Euclidean Brownian motion.

However, in the one-dimensional situation the conclusion of \cref{Th:CenteredRadial} differs from Euclidean Brownian motion when $x$ and $o$ coincide. Here, the Tanaka--Meyer formula for $\rO\circ X$ is formally the one from \cref{Th:RadialOne} for Euclidean Brownian motion, viz.
\begin{align}
    \rmd \rO(X_t)  = \sqrt{2}\d B_t + \rmd L_t^o\quad\PPP^{o}\textnormal{-a.s.}
\end{align}
However, its local time $L^o$ is nontrivial also under $\PPP^o$. In fact, \cite{revuz-yor1999}*{Cor.~VI.2.4} states
\begin{align}
    \PPP^o\big[L_\infty^o=\infty\big] = 1.
\end{align}
Moreover, similarly to item \ref{La:vi} in \cref{Th:Modif} one computes
\begin{align}
    \EEE^o\big[L_t^o\big] = 2\int_{[0,t]}\frac{1}{\sqrt{4\pi s}}\d s= 2\sqrt{\frac{t}{\pi}},
\end{align}
which contrasts \eqref{Eq:Contrast}.
\end{remark}

\subsection{Applications}\label{Sub:Exittime} Throughout the subsection, we fix $\smash{o\in\R^d}$ and consider the distance function $\smash{\rO}$ from \eqref{Eq:Distance}. Fix a stochastic basis $\smash{(\Omega,\scrF,\scrF_\bullet,\PPP^x)}$ as above, where $\smash{x\in\R^d}$. Given $R>0$, we denote the first exit time from the ball $B_R(o)$ by 
\begin{align}\label{Eq:tauR}
    \tau_R := \inf\{t\in\R_+ : X_t\notin B_R(o)\}.
\end{align}
We also recall the Barenblatt radius $R$ from \eqref{Eq:Rdef}: there exists an explicit constant $\rho>0$ depending only on $d$, $p$, and $m$ such that for every $t\in\R_+$,
\begin{align}\label{Eq:Rrho}
    R(t) = \rho\,t^{1/\beta}.
\end{align}

Our first result is the following.

\begin{theorem}[Expectation of exit time]\label{Th:Expexittime} For every $R>0$ and every $\alpha\in(0,d/\beta)$,
\begin{align*}
    \EEE^o\big[\tau_R^\alpha\big] \asymp_\alpha R^{\alpha\beta}.
\end{align*}
\end{theorem}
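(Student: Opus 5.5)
The plan is to bound $\tau_R$ from below deterministically and from above in tail probability, then integrate the tails via the layer-cake formula
\begin{align*}
\EEE^o\big[\tau_R^\alpha\big]=\alpha\int_{(0,\infty)} t^{\alpha-1}\,\PPP^o[\tau_R>t]\d t.
\end{align*}

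\emph{Lower bound.} Under $\PPP^o$ we have $\Law\,X_s = w^o(s,\cdot)\,\Leb^d$, which is supported in $\overline{B}_{R(s)}(o)$ for every $s>0$. By continuity of paths, outside a $\PPP^o$-negligible set we get $\rO(X_s)\leq R(s)$ for all $s\in\R_+$: first for rational $s$, then for all $s$ by continuity of $X$ and $R$. Hence $X_s\in B_R(o)$ whenever $R(s)<R$, i.e.\ whenever $s<R^{-1}(R)=\rho^{-\beta}R^\beta$. This yields $\tau_R\geq \rho^{-\beta}R^\beta$ almost surely, and therefore $\EEE^o[\tau_R^\alpha]\gtrsim_\alpha R^{\alpha\beta}$. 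This bound holds for every $\alpha>0$.

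\emph{Upper bound.} The key inclusion is $\{\tau_R>t\}\subset\{X_t\in B_R(o)\}$. It gives
\begin{align*}
\PPP^o[\tau_R>t]\leq \int_{B_R(o)} w^o(t,y)\d y\leq \omega_{d-1}\,\frac{R^d}{d}\,\sup U(t,\cdot)=\omega_{d-1}\,\frac{R^d}{d}\,C^\gamma\, t^{-d/\beta},
\end{align*}
using $U(t,r)\leq t^{-d/\beta}C^\gamma$ from \eqref{Eq:U} and $S\leq C$. Together with the trivial bound $1$, this gives $\PPP^o[\tau_R>t]\lesssim\min\{1,R^d t^{-d/\beta}\}$. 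Now split the layer-cake integral at $t_0:=R^\beta$. On $(0,t_0]$ we use the bound $1$, which contributes at most $t_0^\alpha=R^{\alpha\beta}$. On $(t_0,\infty)$ we use the decay bound, which contributes
\begin{align*}
\lesssim \alpha R^d\int_{(t_0,\infty)} t^{\alpha-1-d/\beta}\d t=\frac{\alpha}{d/\beta-\alpha}\,R^d\,t_0^{\alpha-d/\beta}\asymp_\alpha R^{\alpha\beta}.
\end{align*}
This integral converges precisely because $\alpha<d/\beta$, which explains the restriction on $\alpha$ in the statement.

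The only step that needs care is the almost sure support statement in the lower bound: the marginal laws control each fixed time only, so we must pass from countably many times to all times via path continuity. The upper bound, by contrast, is a direct consequence of the explicit sup bound on the Barenblatt profile. No use of the Tanaka--Meyer formulas is needed here.
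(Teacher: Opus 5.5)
Your proposal is correct and follows the paper's proof essentially step for step: the lower bound is the deterministic estimate $\tau_R\geq\rho^{-\beta}R^\beta$ obtained from the almost sure support constraint (the paper's \cref{Pr:DetExit}), and the upper bound combines Cavalieri's formula with the inclusion $\{\tau_R>t\}\subset\{X_t\in B_R(o)\}$ and the bound $U(t,\cdot)\leq C^\gamma t^{-d/\beta}$ (the paper's \cref{Th:Exittime}). The only difference is cosmetic: you split the time integral at $R^\beta$ instead of $\rho^{-\beta}R^\beta$, which changes only the constants.
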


\begin{remark}[Linear expectation] In particular, if $d>\beta$ the previous theorem yields a precise asymptotic for the genuine expected exit time $\smash{\EEE^o[\tau_R]}$ by $\smash{R^\beta}$. In the case $m=1$ of the $p$-Laplace equation \eqref{Eq:pLaplaceequation}, $d>\beta$ holds if and only if $p < 3d/(d+1)$.
\end{remark}

We start with a deterministic lower bound on $\tau_R$ reflecting the finite speed of propagation of the Barenblatt support.

\begin{proposition}[Exit time lower bound]\label{Pr:DetExit} For every $R>0$ and every $\smash{x\in B_R(o)}$,
\begin{align}\label{Eq:taulowerbound}
    \tau_R\geq \rho^{-\beta}\,\big[R-\vert x-o\vert\big]^{\beta}\quad\PPP^{x}\textnormal{\textit{-a.s.}}
\end{align}
\end{proposition}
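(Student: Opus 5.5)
The idea is that $X$ cannot leave the Barenblatt support of its own marginals, $\overline{B}_{R(t)}(x)$. Since $\overline{B}_{R(t)}(x)\subset B_R(o)$ as long as $R(t)<R-\vert x-o\vert$, it suffices to prove
\begin{align*}
\PPP^x\big[\vert X_t-x\vert\leq R(t)\textnormal{ for every }t\in\R_+\big]=1.
\end{align*}
The bound then follows deterministically. On this event, for every $t<\rho^{-\beta}(R-\vert x-o\vert)^\beta$ we have $\vert X_t-o\vert\leq \vert x-o\vert+\rho\,t^{1/\beta}<R$ by \eqref{Eq:Rrho}. Hence $\tau_R\geq \rho^{-\beta}(R-\vert x-o\vert)^\beta$.

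To prove the support confinement, I use the law constraint from \eqref{Eq:ClosedMcKeanVlasov}. For every fixed $t>0$ we have $\Law\,X_t=U(t,\vert\cdot-x\vert)\,\Leb^d$, whose support is $\overline{B}_{R(t)}(x)$, so $\PPP^x[\vert X_t-x\vert\leq R(t)]=1$. Intersecting over the countably many rational times $t\in\mathbb{Q}\cap(0,\infty)$ gives a single $\PPP^x$-null set outside which $\vert X_q-x\vert\leq R(q)$ for all rational $q>0$. At $t=0$ we have $X_0=x$ a.s.~by \eqref{Eq:Law0}. Paths of $X$ are continuous, and $R$ is continuous by \eqref{Eq:Rdef}. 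Approximating any real $t>0$ by rationals $q_n\to t$ and passing to the limit in $\vert X_{q_n}-x\vert\leq R(q_n)$ yields the inequality for all $t\in\R_+$ simultaneously, outside that null set.

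I do not expect a genuine obstacle. The only subtle point is upgrading the fixed-time statement to a pathwise statement valid for all times, which is exactly the step handled by continuity and countable intersection above. One might worry whether the constraint should be $\leq$ or $<$, but the non-strict version suffices: the choice $t<\rho^{-\beta}(R-\vert x-o\vert)^\beta$ gives strict inequality $\vert X_t-o\vert<R$, hence $X_t\in B_R(o)$ for all such $t$, and this gives \eqref{Eq:taulowerbound}. The case $x=o$ is included and needs no separate treatment.
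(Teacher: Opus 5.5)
Your proposal is correct and follows the paper's proof. Both upgrade the fixed-time support condition $\vert X_t-x\vert\leq R(t)$ to all times simultaneously via path continuity and continuity of $R$, then conclude with the triangle inequality. The only cosmetic difference is that you show $X_t\in B_R(o)$ for every $t$ below the threshold, whereas the paper evaluates at the exit point $X_{\tau_R}\in\partial B_R(o)$ and rearranges $R\leq \rho\,\tau_R^{1/\beta}+\vert x-o\vert$.
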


\begin{proof} Recall for every $t\in\R_+$ the law of $X_t$ under $\smash{\PPP^x}$ is supported in $\smash{\overline{B}_{R(t)}(x)}$; equivalently, for every \emph{fixed} $t>0$, we have $\vert X_t-x\vert \leq R(t)$ $\PPP^x$-a.s. Since $X$ has continuous paths and $R$ is continuous, this implies $\PPP^x$-a.s.~the previous inequality holds for every $t\in\R_+$.

Therefore, fix an $\omega\in\Omega$ such that $\smash{X_\bullet(\omega)\colon \R_+\to\R^d}$ is continuous and the inequality $\smash{\vert X_t(\omega)-x\vert\leq \rho\,t^{1/\beta}}$ holds for every $t\in\R_+$. We claim \eqref{Eq:taulowerbound} holds at $\omega$. There is nothing to show if $\tau_R(\omega)=\infty$; thus, assume $\tau_R(\omega)$ is finite. By definition of $\tau_R$ and continuity of $X_\bullet(\omega)$, we have $\smash{X_{\tau_R(\omega)}(\omega)\in\partial B_R(o)}$. By the triangle inequality and the above estimate,
\begin{align*}
    R = \rO(X_{\tau_R(\omega)}(\omega))\leq \big\vert X_{\tau_R(\omega)}(\omega)-x\big\vert + \vert x-o\vert \leq \rho\,\tau_R(\omega)^{1/\beta}+\vert x-o\vert.
\end{align*}
Since $\vert x-o\vert < R$, rearranging terms yields the claim.
\end{proof}

In particular, \cref{Pr:DetExit} provides the sharp deterministic lower bound
\begin{align}\label{Eq:tauRlowerbound}
    \tau_R\geq \rho^{-\beta}\,R^\beta\quad\PPP^o\textnormal{-a.s.};
\end{align}
in particular, for every $\smash{t\in[0,\rho^{-\beta}R^\beta]}$,
\begin{align}\label{Eq:tausmalltimes}
    \PPP^o\big[\tau_R>t\big] = 1.
\end{align}
The next result is a complementary upper bound on the survival probability (when the process starts at the center $o$). It shows the exit from $B_R(o)$ is in fact polynomially fast in time. To this aim, we recall the constants $\gamma$ and $C$ from \eqref{Eq:betagammakappa} and \eqref{Eq:Normal}.

\begin{proposition}[Polynomial survival bound]\label{Th:Exittime} For every $R>0$ and every $t>0$,
\begin{align}
    \PPP^o\big[\tau_R>t\big] \leq 1\wedge\frac{\omega_{d-1}C^\gamma}{d}\,t^{-d/\beta}\,R^d.
\end{align}
\end{proposition}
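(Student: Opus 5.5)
The plan is to dominate the survival event at a fixed time by a position event and then integrate the known marginal density. If $\tau_R>t$, then by the definition \eqref{Eq:tauR} the path has stayed in $B_R(o)$ on all of $[0,t]$; in particular $X_t\in \overline{B}_R(o)$ by path continuity. This gives
\begin{align*}
\PPP^o\big[\tau_R>t\big]\leq \PPP^o\big[X_t\in \overline{B}_R(o)\big].
\end{align*}
The marginal at time $t$ is known exactly, namely $\Law\,X_t = U(t,\rO)\,\Leb^d$ under $\PPP^o$ by \eqref{Eq:ClosedMcKeanVlasov}. So the right-hand side equals $\int_{\overline{B}_R(o)} U(t,\rO(y))\d y$.

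Next I bound the density uniformly. By \eqref{Eq:U} and \eqref{Eq:f}, we have $S(t,r)\leq C$ for all $r$, hence
\begin{align*}
U(t,r)\leq t^{-d/\beta}\,C^\gamma,
\end{align*}
using $\gamma>0$. The maximum is attained at the center $r=0$.

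Integrating in polar coordinates then gives
\begin{align*}
\int_{\overline{B}_R(o)} U(t,\rO(y))\d y\leq \omega_{d-1}\,C^\gamma\,t^{-d/\beta}\int_{[0,R]} r^{d-1}\d r=\frac{\omega_{d-1}C^\gamma}{d}\,t^{-d/\beta}\,R^d.
\end{align*}
Here $\omega_{d-1}R^d/d$ is the volume of $B_R(o)$. In dimension one this reads $2R$, consistent with the convention $\omega_0=2$. The boundary sphere is $\Leb^d$-negligible, so it does not matter whether the ball is open or closed.

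The trivial bound by $1$ holds because we are bounding a probability, and combining the two gives the minimum. There is no genuine obstacle; the only point needing care is the inclusion $\{\tau_R>t\}\subset\{X_t\in\overline{B}_R(o)\}$. This uses only the definition of $\tau_R$ as an infimum over $\R_+$ and continuity of paths, and needs neither uniqueness nor the Markov property, in line with the paper's standing philosophy.
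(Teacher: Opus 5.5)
Your proposal is correct and follows the paper's proof essentially verbatim: you contain the survival event in a position event at time $t$, bound the density uniformly by $C^\gamma t^{-d/\beta}$ via $S\leq C$, and integrate in polar coordinates. One small simplification: since $t<\tau_R$ and $\tau_R$ is an infimum, you get $\{\tau_R>t\}\subset\{X_t\in B_R(o)\}$ directly, which is the open-ball inclusion the paper uses, so path continuity is not needed for this step.
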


\begin{proof} The bound $\smash{\PPP^o[\tau_R>t]\leq 1}$ is trivial. 

To show the nontrivial bound, note the event $\smash{\{\tau_R>t\}}$ is contained in $\smash{\{X_t\in B_R(o)\}}$. By definition of the law of $X_t$ and using polar coordinates,
\begin{align}
    \PPP^o\big[\tau_R>t\big] \leq \PPP^o\big[X_t\in B_R(o)\big] = \omega_{d-1}\!\int_{[0,R]} U(t,r)\,r^{d-1}\d r.
\end{align}
Using the inequality $\smash{U(t,r)\leq C^{\gamma}\,t^{-d/\beta}}$ for every $r\in \R_+$ implied by \eqref{Eq:US} gives
\begin{align*}
    \int_{[0,R]} U(t,r)\,r^{d-1}\d r \leq C^\gamma\,t^{-d/\beta} \int_{[0,R]} r^{d-1}\d r= \frac{C^\gamma}{d}\,t^{-d/\beta}\,R^d.
\end{align*}
Combining these estimates yields the claim.
\end{proof}

\begin{proof}[Proof of \cref{Th:Expexittime}] By the deterministic lower bound \eqref{Eq:tauRlowerbound}, for every $\alpha> 0$ we get
\begin{align*}
    \EEE^o\big[\tau_R^\alpha\big] \gtrsim_\alpha R^{\alpha\beta}.
\end{align*}

We turn to the upper bound. Cavalieri's formula and \eqref{Eq:tausmalltimes} imply
\begin{align*}
    \EEE^o\big[\tau_R^\alpha\big] &= \alpha\int_{\R_+} t^{\alpha-1}\,\PPP^o\big[\tau_R>t\big]\d t\\
    &= \alpha\int_{[0,\rho^{-\beta}R^\beta]} t^{\alpha-1}\d t + \alpha\int_{[\rho^{-\beta}R^{\beta},\infty)} t^{\alpha-1}\,\PPP^o\big[\tau_R>t\big]\d t.
\end{align*}
Since $\alpha > 0$, the first summand on the right-hand side easily gives
\begin{align*}
    \int_{[0,\rho^{-\beta}R^\beta]} t^{\alpha-1}\d t \asymp_\alpha R^{\alpha\beta}.
\end{align*}
For the second summand, we use \cref{Th:Exittime} to obtain
\begin{align*}
    \int_{[\rho^{-\beta}R^{\beta},\infty)} t^{\alpha-1}\,\PPP^o\big[\tau_R>t\big]\d t \lesssim R^d\int_{[\rho^{-\beta}R^\beta,\infty)} t^{\alpha-1-d/\beta}\d t \asymp_\alpha R^{\alpha\beta};
\end{align*}
in the last identity, we used the hypothesis $\alpha < d/\beta$, which makes the latter integrand integrable at infinity. This yields the claim.
\end{proof}

Next, we show that, beyond the deterministic ceiling \eqref{Eq:Rrho}, the self-similarly rescaled radial process admits precise control in expectation. Consider the process $Y$ given by
\begin{align}\label{Eq:Ydef}
    Y_t := \frac{\rO(X_t)}{t^{1/\beta}}.
\end{align}
By \eqref{Eq:Rrho} and the proof of \cref{Pr:DetExit}, $\PPP^o$-a.s.~every $t>0$ satisfies
\begin{align}\label{Eq:Yrho}
    Y_t\leq \rho.
\end{align}

\begin{proposition}[Law of rescaled process]\label{Pr:LawResc} For every $t>0$, the law of $Y_t$ under $\PPP^o$ is equal to $V\,\Leb^1$, where $V\colon \R\to\R_+$ is defined by
\begin{align*}
    V(r) :=\begin{cases} \omega_{d-1}\,\big[C - \kappa\,r^{p/(p-1)}\big]_+^\gamma\,r^{d-1} & \textnormal{if } r\in\R_+,\\
        0 & \textnormal{otherwise};
    \end{cases}
\end{align*}
in particular, it does not depend on $t$.
\end{proposition}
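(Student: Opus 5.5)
The plan is to compute the distribution function of $Y_t$ directly from the prescribed marginal law $\Law\,X_t = U(t,\rO)\,\Leb^d$ under $\PPP^o$, which holds by \eqref{Eq:ClosedMcKeanVlasov} with starting point $x=o$. For a Borel set $A\subset\R$ (or a nonnegative Borel test function $\varphi$), we write
\begin{align*}
\EEE^o\big[\varphi(Y_t)\big] = \int_{\R^d}\varphi\big(t^{-1/\beta}\,\vert y-o\vert\big)\,U(t,\vert y-o\vert)\d y = \omega_{d-1}\int_{\R_+}\varphi\big(t^{-1/\beta}\,r\big)\,U(t,r)\,r^{d-1}\d r,
\end{align*}
using polar coordinates centered at $o$ (for $d=1$ this is the convention $\omega_0=2$, i.e.~folding the two half-lines, which is legitimate since the integrand depends only on $\vert y-o\vert$).

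Next, we substitute $r = t^{1/\beta}s$, so that $\d r = t^{1/\beta}\d s$ and $r^{d-1} = t^{(d-1)/\beta}s^{d-1}$. By \eqref{Eq:U} and \eqref{Eq:f}, $U(t,t^{1/\beta}s) = t^{-d/\beta}\,[C-\kappa\, s^{p/(p-1)}]_+^\gamma$, since $S(t,t^{1/\beta}s) = C - \kappa\,s^{p/(p-1)}$ no longer depends on $t$. Collecting powers of $t$ gives $t^{-d/\beta}\cdot t^{(d-1)/\beta}\cdot t^{1/\beta} = 1$. Hence
\begin{align*}
\EEE^o\big[\varphi(Y_t)\big] = \int_{\R_+}\varphi(s)\,\omega_{d-1}\,\big[C-\kappa\,s^{p/(p-1)}\big]_+^\gamma\,s^{d-1}\d s = \int_{\R}\varphi(s)\,V(s)\d s,
\end{align*}
which identifies $\Law\,Y_t = V\,\Leb^1$ for every $t>0$. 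Since the right-hand side contains no $t$, independence of $t$ follows.

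There is no real obstacle. The only points needing care are, first, checking that the self-similar scaling exponents cancel exactly, which is where the specific form $t^{-d/\beta}$ of the prefactor in \eqref{Eq:U} enters. Second, one must handle the one-dimensional polar-coordinate convention. As a consistency check, $\int V\d s = 1$ follows from \eqref{Eq:Normal}, and $\operatorname{supp}V = [0,(C/\kappa)^{1-1/p}] = [0,\rho]$, in agreement with \eqref{Eq:Yrho}.
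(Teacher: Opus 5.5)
Your proposal is correct and follows essentially the same route as the paper. Both arguments use the prescribed marginal $U(t,\rO)\,\Leb^d$ under $\PPP^o$, pass to polar coordinates, and substitute $r=t^{1/\beta}s$ so that the powers of $t$ cancel. The only difference is that you test against functions $\varphi$, while the paper tests against Borel sets $A\subset\R_+$.
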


\begin{proof} Clearly, the sought law is concentrated on $\R_+$. Let $A\subset\R_+$ be Borel measurable. Using the law of $X_t$ from \eqref{Eq:OriginalMCV}, polar coordinates, and the form of $U$ from \eqref{Eq:U},
\begin{align*}
    \PPP^o\big[Y_t \in A\big] &= \PPP^o\big[\rO(X_t)\in t^{1/\beta}A\big]\\
    &= \int_{\R^d} 1_{t^{1/\beta}A}(\vert y-o\vert)\,U(t,\vert y-o\vert)\d y\\
    &= \omega_{d-1}\int_{t^{1/\beta}A} U(t,r)\,r^{d-1}\d r\\
    &= \omega_{d-1}\, t^{-d/\beta}\int_{t^{1/\beta}A} \big[C-\kappa\big[t^{-1/\beta}\,r\big]^{p/(p-1)}\big]_+^\gamma\,r^{d-1}\d r.
\end{align*}
By the substitution $r = t^{1/\beta}\,s$ and $\d r = t^{1/\beta}\d s$, this equals
\begin{align*}
t^{-d/\beta + (d-1)/\beta + 1/\beta}\int_{A} V(s)\d s = \int_A V(s)\d s.
\end{align*}
By the arbitrariness of $A$, the claim follows.
\end{proof}

\begin{remark}[Alternative representations of Barenblatt profiles]\label{Re:AltRep} The time-invariance from \cref{Pr:LawResc} manifests the self-similarity of the Barenblatt family and, as such, is not tied to the McKean--Vlasov dynamics \eqref{Eq:OriginalMCV}. Indeed, De Gregorio \cite{degregorio2018} and De Gregorio--Garra \cite{degregorio-garra2020} represent Barenblatt-type profiles --- including those of the parabolic $p$-Laplace equation \eqref{Eq:pLaplaceequation} --- by random flights and random velocity models, and observe the analogous time-invariance of the rescaled radial law, cf.~\cite{degregorio2018}*{Prop.~3.2}. These processes are designed to match a given profile and are unrelated to \eqref{Eq:OriginalMCV}; in particular, no pathwise decomposition in the spirit of \cref{Th:CenteredRadial} is available for them.
\end{remark}

Under $\PPP^o$, \cref{Th:CenteredRadial} exhibits the radial process as the semimartingale
\begin{align}\label{Eq:rXdecomp}
    \rmd \rO(X_t) = \sqrt{2a(t,\rO(X_t))}\d B_t + b(t,\rO(X_t))\d t\quad\PPP^o\textnormal{-a.s.},
\end{align}
where the drift density $b$ is given by
\begin{align}\label{Eq:bdef}
    b(t,\rO(X_t)) := a(t,\rO(X_t))\,\frac{d-1}{\rO(X_t)} + a_r(t,\rO(X_t))\quad\PPP^o\textnormal{-a.s.}
\end{align}
Since the diffusivity $a$ degenerates at the center, cf.~\cref{Le:aU-bounds-m}, the drift density $b$ does not admit a deterministic bound. The following lemma shows its expectation, and the one of the diffusivity along the process, nevertheless obeys the natural self-similar scaling. It solely relies on the marginal laws prescribed by \eqref{Eq:ClosedMcKeanVlasov}. These bounds are implicit in the proof of Barbu--Grube--Rehmeier--Röckner \cite{barbu-grube-rehmeier-rockner2025+}*{Thm. 4.4}, where the corresponding double integrals (over subsets of $\R_+$ and $\Omega$) are shown to be finite in order to verify the integrability conditions of \cref{Def:PWS} in \cref{Th:ExistencePWS}.

\begin{lemma}[Coefficient bounds in expectation]\label{Le:Coeffq} For every $t>0$,
\begin{align*}
    \EEE^o\big[a(t,\rO(X_t))\big] &\lesssim t^{-1+2/\beta},\\
    \EEE^o\big[\big\vert b(t,\rO(X_t))\big\vert\big] &\lesssim t^{-1+1/\beta}.
\end{align*}
\end{lemma}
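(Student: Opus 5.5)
The plan is to compute both expectations exactly through the marginal law $\Law\,X_t=U(t,\rO)\,\Leb^d$ under $\PPP^o$ and polar coordinates, as in \eqref{Eq:SecondExp}. This reduces everything to one-dimensional integrals over $[0,R(t)]$, which I bound using \cref{Le:aU-bounds-m} and the scaling $R(t)=\rho\,t^{1/\beta}$ from \eqref{Eq:Rrho}. No pathwise information is needed.

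\emph{Diffusivity.} Polar coordinates give
\begin{align*}
\EEE^o\big[a(t,\rO(X_t))\big]=\omega_{d-1}\int_{[0,R(t)]}a(t,r)\,U(t,r)\,r^{d-1}\d r.
\end{align*}
By \cref{Le:aU-bounds-m}, this is at most a constant times $t^{-1-d/\beta+p/(p-1)\beta}\,R(t)^{d+(p-2)/(p-1)}$. The integral of $r^{d-1+(p-2)/(p-1)}$ converges at $0$ by \eqref{Eq:Expd2}. Inserting $R(t)\asymp t^{1/\beta}$, the exponent of $t$ becomes
\begin{align*}
-1+\frac{1}{\beta}\Big[\frac{p}{p-1}+\frac{p-2}{p-1}\Big]=-1+\frac{2}{\beta},
\end{align*}
which is the first bound.

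\emph{Drift.} I use the triangle inequality $\vert b\vert\leq a\,(d-1)/\rO+\vert a_r\vert$ from \eqref{Eq:bdef}. The first term is the computation already done in \eqref{Eq:SecondExp}--\eqref{Eq:Larger!}: it contributes $\lesssim t^{-1-d/\beta+p/(p-1)\beta}R(t)^{d-1+(p-2)/(p-1)}\asymp t^{-1+1/\beta}$.

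For $a_r$, I differentiate the exact identity $a(t,r)=c\,t^{-1+p/(p-1)\beta}\,r^{(p-2)/(p-1)}\,S(t,r)$ from the proof of \cref{Le:aU-bounds-m}, with $c$ depending only on $d,p,m$. Using $S_r(t,r)=-\kappa\tfrac{p}{p-1}\,t^{-p/(p-1)\beta}\,r^{1/(p-1)}$, this gives on $(0,R(t))$
\begin{align*}
\vert a_r(t,r)\vert\lesssim t^{-1+p/(p-1)\beta}\,r^{-1/(p-1)}\Big[S(t,r)+\kappa\big[t^{-1/\beta}r\big]^{p/(p-1)}\Big]\lesssim t^{-1+p/(p-1)\beta}\,r^{-1/(p-1)}.
\end{align*}
The last step holds because both bracketed terms are at most $C$ on the support. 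The absolute value also takes care of the sign of $p-2$ when $p<2$. Combining this with $U\leq C^\gamma t^{-d/\beta}$, the remaining radial integral is $\int_{[0,R(t)]}r^{d-1-1/(p-1)}\d r$. It converges exactly when $d(p-1)>1$, i.e.\ $dp>d+1$, which is the second condition in \eqref{Eq:ranges}. Its value is $\asymp R(t)^{d-1/(p-1)}$, and the exponent of $t$ becomes
\begin{align*}
-1+\frac{1}{\beta}\Big[\frac{p}{p-1}-d+d-\frac{1}{p-1}\Big]=-1+\frac{1}{\beta}.
\end{align*}
This gives the second bound.

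The main obstacle is the $a_r$ term. \cref{Le:aU-bounds-m} gives no control of $a_r$, so I must return to the explicit product formula for $a$ and bound its $r$-derivative by hand. The singularity $r^{-1/(p-1)}$ at the center must be integrable against $r^{d-1}$, and this is precisely where the hypothesis $dp>d+1$ is used. One should also note that the event $\{\rO(X_t)=0\}$, where $b$ is undefined, is negligible by absolute continuity of the law of $X_t$, and that $a_r$ is only needed on $(0,R(t))$ since the law of $X_t$ is supported in $\overline{B}_{R(t)}(o)$.
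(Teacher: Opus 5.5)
Your proposal is correct and follows the paper's proof essentially step for step: the polar-coordinate formula against the Barenblatt marginal, the bounds from \cref{Le:aU-bounds-m}, the splitting $\vert b\vert\leq a\,(d-1)/\rO+\vert a_r\vert$, and a hand bound on $a_r$ obtained from the explicit product formula for $a$. The one cosmetic difference is in that last bound. You absorb the $t^{-1}r$ contribution of $a_r$ into the $t^{-1+p/(p-1)\beta}\,r^{-1/(p-1)}$ term, using $\kappa\,[t^{-1/\beta}r]^{p/(p-1)}\leq C$ on the support. The paper instead keeps both terms and integrates them separately; either way one obtains $t^{-1+1/\beta}$.
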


\begin{proof} We recall the Barenblatt radius $R$ from \eqref{Eq:Rrho}. Since the law \eqref{Eq:OriginalMCV} of $X_t$ under $\PPP^o$ is supported on $\smash{\overline{B}_{R(t)}(o)}$, another application of polar coordinates gives for every Borel measurable function $\varphi\colon (0,R(t))\to \R_+$
\begin{align}\label{Eq:PolarExp}
    \EEE^o\big[\varphi\circ \rO(X_t)\big] = \omega_{d-1}\int_{[0,R(t)]}\varphi(r)\,U(t,r)\,r^{d-1}\d r.
\end{align}
We will employ this together with the following inequalities, where the first follows from  \cref{Le:aU-bounds-m} and the other two are trivial: for every $r\in (0,R(t))$ and every $\theta\in \R\setminus\{0\}$,
\begin{align}\label{Eq:aaaaaaaaaabounddd}
\begin{split}
    a(t,r)\,U(t,r) &\lesssim t^{-1-d/\beta+p/(p-1)\beta}\,r^{(p-2)/(p-1)},\\
    U(t,r) &\lesssim t^{-d/\beta},\\
    R(t)^{\theta} &\asymp_\theta t^{\theta/\beta}.
    \end{split}
\end{align}

We now show the first claimed estimate. By \eqref{Eq:PolarExp} and \eqref{Eq:aaaaaaaaaabounddd},
\begin{align}
    \EEE^o\big[a(t,\rO(X_t))\big] &= \omega_{d-1} \int_{[0,R(t)]} a(t,r)\,U(t,r)\,r^{d-1}\d r\\
    &\lesssim t^{-1-d/\beta+p/(p-1)\beta}\int_{[0,R(t)]} r^{d-1+(p-2)/(p-1)}\d r.
\end{align}
By \eqref{Eq:Expd2}, the exponent of $r$ is positive. Thus,
\begin{align}
    \EEE^o\big[a(t,\rO(X_t))\big]\lesssim t^{-1-d/\beta+p/(p-1)\beta}\,R(t)^{d+(p-2)/(p-1)} \asymp t^{-1+2/\beta},
\end{align}
where we used \eqref{Eq:aaaaaaaaaabounddd} and the exponent identity
\begin{align}
    -\frac{d}{\beta}+\frac{p}{(p-1)\beta} + \frac{d}{\beta} + \frac{p-2}{(p-1)\beta} = \frac{2}{\beta}.
\end{align}

Finally, we address the second claimed estimate. By \eqref{Eq:bdef}, it suffices to estimate the expectations of $\smash{a(t,\rO(X_t))/\rO(X_t)}$ and of $\vert a_r(t,\rO(X_t))\vert$ separately. For the first term, \eqref{Eq:PolarExp} and the computation from \eqref{Eq:Larger!} yield
\begin{align}
    \EEE^o\Big[\frac{a(t,\rO(X_t))}{\rO(X_t)}\Big] &\lesssim t^{-1+1/\beta}.
    \end{align}
For the second term, recall from the proof of \cref{Le:aU-bounds-m} that 
\begin{align}
    \big\vert a_r(t,r)\big\vert \lesssim t^{-1+p/(p-1)\beta}\,r^{-1/(p-1)} + t^{-1}\,r.
\end{align}
Combining this with \eqref{Eq:PolarExp},
\begin{align}
    \EEE^o\big[\big\vert a_r(t,\rO(X_t))\big\vert\big] &= \omega_{d-1}\int_{[0,R(t)]} \big\vert a_r(t,r)\big\vert\,U(t,r)\,r^{d-1}\d r\\
    &\lesssim t^{-1-d/\beta+p/(p-1)\beta}\int_{[0,R(t)]} r^{d-1-1/(p-1)}\d r\\
    &\qquad\qquad + t^{-1-d/\beta}\int_{[0,R(t)]} r^d\d r.
\end{align}
By \eqref{Eq:Expd2}, the exponent in the second last integral is larger than $-1$. Thus, 
\begin{align}
    \EEE^o\big[\big\vert a_r(t,\rO(X_t))\big\vert\big] &\lesssim t^{-1-d/\beta+p/(p-1)\beta}\,R(t)^{d-1/(p-1)} + t^{-1-d/\beta}\,R(t)^{d+1}\asymp t^{-1+1/\beta},
\end{align}
where we used the exponent identities
\begin{align}
    -\frac{d}{\beta} + \frac{p}{(p-1)\beta} + \frac{d}{\beta} - \frac{1}{(p-1)\beta} = \frac{1}{\beta} = -\frac{d}{\beta}+\frac{d+1}{\beta}.
\end{align}
This completes the proof.
\end{proof}

\begin{theorem}[Stopped estimate for rescaled radial process]\label{Th:Stopped} There is a constant $c>0$ depending only on $p$, $m$, and $d$ with the following property. For every $s,t>0$ with $s<t$ and every $\scrF_\bullet$-stopping time $\tau$ with $\smash{s\leq \tau\leq t}$ $\smash{\PPP^o}$-a.s.,
\begin{align}
    \EEE^o\big[Y_\tau\big] + \frac{1}{\beta}\,\EEE^o\Big[\!\int_{[s,\tau]} \frac{Y_u}{u}\d u\Big] \leq \EEE^o\big[Y_s\big] + c\log\frac{t}{s}.
\end{align}
\end{theorem}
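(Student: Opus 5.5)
The plan is to apply Itô's product rule to $Y_u=\rO(X_u)\,u^{-1/\beta}$ on the interval $[s,\tau]$, which stays away from time zero, and then use the centered Tanaka--Meyer formula of \cref{Th:CenteredRadial} in the form \eqref{Eq:rXdecomp}. Since $u\mapsto u^{-1/\beta}$ is smooth and deterministic on $[s,t]$, we get $\PPP^o$-a.s.
\begin{align*}
    Y_\tau - Y_s = \int_{[s,\tau]} u^{-1/\beta}\sqrt{2a(u,\rO(X_u))}\d B_u + \int_{[s,\tau]} u^{-1/\beta}\,b(u,\rO(X_u))\d u - \frac{1}{\beta}\int_{[s,\tau]}\frac{Y_u}{u}\d u.
\end{align*}
The last term comes from $\rmd u^{-1/\beta} = -\beta^{-1}u^{-1-1/\beta}\d u$, and it produces exactly the second summand on the left-hand side of the claimed inequality.

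Next we take expectations. The stochastic integral is a genuine martingale on $[s,t]$, because its quadratic variation has expectation
\begin{align*}
    2\int_{[s,t]} u^{-2/\beta}\,\EEE^o\big[a(u,\rO(X_u))\big]\d u \lesssim \int_{[s,t]} u^{-1}\d u<\infty
\end{align*}
by \cref{Le:Coeffq}. Alternatively, \eqref{Eq:Integrability} together with $u\geq s$ gives the same conclusion. Since $s\leq\tau\leq t$ is a bounded stopping time, optional stopping shows that the stopped integral has zero expectation. The terms $Y_\tau$ and $Y_s$ are bounded by $\rho$ via \eqref{Eq:Yrho}, so they are integrable. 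Rearranging, it then remains to bound the drift term.

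For the drift term we enlarge the stochastic interval to the deterministic one, using $\tau\le t$, and then apply Fubini:
\begin{align*}
    \EEE^o\Big[\int_{[s,\tau]} u^{-1/\beta}\,b(u,\rO(X_u))\d u\Big] \leq \int_{[s,t]} u^{-1/\beta}\,\EEE^o\big[\vert b(u,\rO(X_u))\vert\big]\d u \lesssim \int_{[s,t]} u^{-1/\beta}\,u^{-1+1/\beta}\d u = \log\frac{t}{s},
\end{align*}
where the second inequality is the second estimate of \cref{Le:Coeffq}. This gives the claimed inequality with $c$ equal to the implicit constant, which depends only on $p$, $m$, and $d$.

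The main obstacle, though a mild one, is to justify these manipulations rigorously. There are two points. First, Fubini and the domination of the drift require the $\PPP^o$-a.s. finiteness of $\int a/\rO$, which was established in the proof of \cref{Th:CenteredRadial}, and $\int\vert b\vert$ is integrable in expectation on $[s,t]$ by \cref{Le:Coeffq}. Second, the product rule must be applied to a semimartingale that is only known through \cref{Th:CenteredRadial}; this is legitimate because $u^{-1/\beta}$ has finite variation on $[s,t]$, so no covariation term appears.
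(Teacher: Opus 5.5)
Your proposal is correct and follows the paper's proof essentially step for step. The ingredients are the same:
\begin{itemize}
\item the time-dependent Itô formula for $Y_u=u^{-1/\beta}\rO(X_u)$ on $[s,t]$, based on \eqref{Eq:rXdecomp};
\item the $L^2$-martingale property of the stochastic integral, from the first bound of \cref{Le:Coeffq}, combined with optional stopping at the bounded time $\tau$;
\item the drift estimate $\int_{[s,t]}u^{-1/\beta}\,\EEE^o[\vert b(u,\rO(X_u))\vert]\d u\lesssim\log(t/s)$, from the second bound of \cref{Le:Coeffq}.
\end{itemize}
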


\begin{proof} Since $\smash{u^{-1/\beta}}$ depends smoothly on $u\in [s,\infty)$, the time-dependent Itô formula and \eqref{Eq:rXdecomp} imply $\PPP^o$-a.s.~on the given time interval that
\begin{align}\label{Eq:dY}
    \rmd Y_u = u^{-1/\beta}\,\sqrt{2a(u,\rO(X_u))}\d B_u + u^{-1/\beta}\,b(u,\rO(X_u))\d u - \frac{1}{\beta}\,\frac{Y_u}{u}\d u.
\end{align}
The second last Lebesgue integral is finite on $[s,t]$: by Fubini's theorem and \cref{Le:Coeffq},
\begin{align}\label{Eq:Driftint}
    \EEE^o\Big[\!\int_{[s,t]} u^{-1/\beta}\,\big\vert b(u,\rO(X_u))\big\vert\d u\Big] \lesssim \int_{[s,t]} u^{-1/\beta}\,u^{-1+1/\beta}\d u = \log\frac{t}{s}.
\end{align}

Define the process $N$ on $[s,t]$ by
\begin{align}
    N_v := \int_{[s,v]} u^{-1/\beta}\,\sqrt{2a(u,\rO(X_u))}\d B_u.
\end{align}
It is a continuous local martingale with $N_s=0$. By Fubini's theorem and \cref{Le:Coeffq},
\begin{align}
    \EEE^o\big[[N]_t\big] = 2\int_{[s,t]} u^{-2/\beta}\,\EEE^o\big[a(u,\rO(X_u))\big]\d u \lesssim \int_{[s,t]} u^{-2/\beta}\,u^{-1+2/\beta}\d u = \log\frac{t}{s},
\end{align}
which is finite. Consequently, $N$ is an $L^2$-bounded martingale. Since $\tau$ is bounded, the optional stopping theorem readily yields
\begin{align}\label{Eq:OptStop}
    \EEE^o\big[N_\tau\big] = 0.
\end{align}

Now we address the claim. Integrating \eqref{Eq:dY} on $[s,\tau]$ and rearranging terms gives
\begin{align}\label{Eq:Yint}
    Y_\tau + \frac{1}{\beta}\int_{[s,\tau]}\frac{Y_u}{u}\d u = Y_s + N_\tau + \int_{[s,\tau]} u^{-1/\beta}\,b(u,\rO(X_u))\d u\quad\PPP^o\textnormal{-a.s.}
\end{align}
All terms here are $\PPP^o$-integrable: for the left-hand side, this follows from \eqref{Eq:Yrho} and the inequality $\tau\leq t$, and for the right-hand side from \eqref{Eq:Yrho}, the martingale property of $N$, and  \eqref{Eq:Driftint}. Taking $\PPP^o$-expectations and using \eqref{Eq:OptStop} as well as
\begin{align}
    \EEE^o\Big[\!\int_{[s,\tau]} u^{-1/\beta}\,b(u,\rO(X_u))\d u\Big] \leq \EEE^o\Big[\!\int_{[s,t]} u^{-1/\beta}\,\big\vert b(u,\rO(X_u))\big\vert\d u\Big]
\end{align}
together with \eqref{Eq:Driftint} establishes the claim.
\end{proof}

The stopped estimate converts, by Markov's inequality, into a control on excursions of the radial process over moving self-similar boundaries.

\begin{corollary}[Exit estimate over a moving self-similar boundary]\label{Cor:MovBoundary} For every $s,t> 0$ with $s<t$ and every $\vartheta>0$,
\begin{align}\label{Eq:MovingExit} 
    \PPP^o\Big[\!\sup\!\Big\lbrace\frac{\rO(X_u)}{u^{1/\beta}} : u\in [s,t]\Big\rbrace\geq \vartheta\Big] \leq \frac{1}{\vartheta}\Big[\EEE^o\big[Y_s\big]+c\log\frac{t}{s}\Big],
\end{align}
where $c$ is the constant from \cref{Th:Stopped}.
\end{corollary}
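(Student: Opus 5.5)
We apply \cref{Th:Stopped} to a suitable hitting time of the moving boundary and then use Markov's inequality. Fix $s<t$ and $\vartheta>0$, and define
\begin{align*}
    \sigma := \inf\{u\in[s,t] : Y_u\geq \vartheta\}\wedge t,
\end{align*}
with the convention $\inf\emptyset=\infty$. By the standing assumptions on $\scrF_\bullet$ (right-continuous and complete), $Y$ is continuous and adapted on $[s,t]$, since $u\mapsto u^{-1/\beta}$ is continuous there. Hence $\sigma$ is an $\scrF_\bullet$-stopping time, being the hitting time of a closed set by a continuous adapted process, truncated at $t$. By construction $s\leq \sigma\leq t$ holds $\PPP^o$-a.s.

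Next, \cref{Th:Stopped} applies to $\tau:=\sigma$. The second term on its left-hand side is nonnegative, because $Y\geq 0$. Discarding it gives
\begin{align*}
    \EEE^o\big[Y_\sigma\big]\leq \EEE^o\big[Y_s\big]+c\log\frac{t}{s}.
\end{align*}

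Finally, on the event $A:=\{\sup\{Y_u : u\in[s,t]\}\geq \vartheta\}$ the supremum over the compact interval $[s,t]$ is attained by continuity. So the set in the definition of $\sigma$ is nonempty, and continuity again gives $Y_\sigma\geq\vartheta$ on $A$. The bound $\vartheta\,1_A\leq Y_\sigma$ then holds pointwise, since $Y_\sigma\geq 0$. Taking expectations (Markov's inequality) yields $\vartheta\,\PPP^o[A]\leq \EEE^o[Y_\sigma]$, and combining this with the previous display gives \eqref{Eq:MovingExit}.

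No real obstacle is expected here. The only point needing care is the measurability of $\sigma$ as a stopping time, which is standard under the usual conditions, and the nonnegativity of $Y$ used to drop the integral term.
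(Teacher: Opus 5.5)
Your proposal is correct and follows essentially the same route as the paper: the paper stops at $\tau := \inf\{u\in[s,\infty) : Y_u\geq\vartheta\}\wedge t$ (identical to your $\sigma$), uses continuity to get $Y_\tau\geq\vartheta\,\One_E$, and applies \cref{Th:Stopped}. Your explicit remark that the nonnegative integral term $\frac{1}{\beta}\,\EEE^o\big[\int_{[s,\tau]} Y_u/u\d u\big]$ may be dropped is the step the paper leaves implicit.
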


\begin{proof} Define the $\scrF_\bullet$-stopping time
\begin{align}
    \tau := \inf\{u\in [s,\infty) : Y_u\geq \vartheta\}\wedge t;
\end{align}
it is a stopping time since $Y$ is continuous and $\scrF_\bullet$-adapted and $[\vartheta,\infty)$ is closed. Define
\begin{align*}
    E := \Big\lbrace\!\sup\!\Big\lbrace\frac{\rO(X_u)}{u^{1/\beta}} : u\in [s,t]\Big\rbrace\geq \vartheta\Big\rbrace.
\end{align*}
By continuity of the involved sample paths, the supremum of $Y$ over the compact interval $[s,t]$ is attained; hence on $E$ there is $u\in[s,t]$ with $Y_u\geq\vartheta$, which gives $\tau\leq u\leq t$ and, again by continuity, we also have $Y_\tau\geq\vartheta$. Since $Y$ is nonnegative, we thus obtain
\begin{align}
    Y_\tau \geq \vartheta\,\One_E\quad\PPP^o\textnormal{-a.s.}
\end{align}
Taking expectations with respect to $\PPP^o$ and applying \cref{Th:Stopped} yields
\begin{align*}
    \PPP^o\big[E\big] \leq \frac{1}{\vartheta}\,\EEE^o\big[Y_\tau\big] \leq \frac{1}{\vartheta}\Big[\EEE^o\big[Y_s\big] + c\log\frac{t}{s}\Big],
\end{align*}
which concludes the proof.
\end{proof}

\begin{remark}[Range of nontriviality]\label{Re:Nontrivial} By \eqref{Eq:Yrho}, the left-hand side of \eqref{Eq:MovingExit} vanishes for every $\vartheta>\rho$. On the other hand, by \cref{Pr:LawResc} we see the law of $Y_s$ with respect to $\PPP^o$ is independent of $s > 0$ and not concentrated at $\rho$, which implies that the constant expectation $\smash{\eta := \EEE^o[Y_s]}$ satisfies $\smash{\eta<\rho}$. Thus, \cref{Cor:MovBoundary} is nontrivial precisely for $\smash{\vartheta\in (\eta + c\log t/s,\rho)}$, a nonempty range when $t < s\,\rme^{(\rho-\eta)/c}$: it controls the excursions of the rescaled radial process over levels \emph{below} the Barenblatt boundary on logarithmic time windows, which does not follow from the support condition \eqref{Eq:Rrho}.
\end{remark}

\appendix

\section{Wasserstein and gradient estimates}\label{Sub:Appendix}
In this appendix, we collect elementary Wasserstein and gradient estimates for a family of operators induced by the marginal laws of probabilistically weak solutions to \eqref{Eq:OriginalMCV}.

For $\smash{x\in\R^d}$ and $t\in\R_+$, let $\smash{\sfh_t\delta_x\in\Prob(\R^d)}$ denote the law of such a solution started at $x$ at time $t$; we call the induced family $\sfh_\bullet$ \emph{Barenblatt flow}. Define the \emph{dual Baren\-blatt flow} acting on locally bounded Borel measurable functions $\smash{f\colon\R^d\to\R}$ by
\begin{align}
\sfp_tf(x) := \int_{\R^d} f\d\sfh_t\delta_x.
\end{align}
Note $\sfp_\bullet f$ is \emph{not} a solution of the Leibenson equation \eqref{Eq:Leibenson} with initial datum $f$, as \eqref{Eq:Leibenson} is nonlinear while $\sfp_\bullet f$ depends linearly on $f$. On the other hand, similar functionals play a prominent role in the study of McKean--Vlasov SDEs with possibly singular coefficients, cf.~e.g.~Ren--Wang \cite{ren-wang2019}, Huang--Wang \cite{huang-wang2022}, and the references therein.

An application of elementary coupling techniques to the Leibenson process is isometry of the Barenblatt flow with respect to Wasserstein distances.  For $q\in[1,\infty]$, we define the extended $q$-Wasserstein distance of $\smash{\mu,\nu\in\Prob(\R^d)}$ by
\begin{align}\label{Eq:Wasserstein}
    W_q(\mu,\nu):= \inf\{\Vert X-Y\Vert_{L^q(\Omega,\scrF,\PPP)} : \Law\,X  = \mu,\, \Law\,Y=\nu\};
\end{align}
more precisely, the infimum is taken over all complete probability spaces $(\Omega,\scrF,\PPP)$ and all random vectors $X$ and $Y$ on $\Omega$ with laws $\mu$ and $\nu$ under $\PPP$, respectively. Recall $W_q(\mu,\nu)$ depends nondecreasingly on the exponent $q$ and
\begin{align}\label{Eq:Winfty}
\begin{split}
    W_\infty(\mu,\nu) = \lim_{q\to\infty}W_q(\mu,\nu),
    \end{split}
\end{align}
cf.~e.g.~Villani \cite{villani2009}*{Rem. 6.6} and Kuwada \cite{kuwada2010}*{Lem.~3.2}. Moreover, recall $W_q$ is indeed an extended distance, which means it obeys the axioms of a metric except that it may attain the value $\infty$ \cite{villani2009}*{p. 106}. If $\smash{W_q(\mu,\nu)}$ is finite, there exists an \emph{optimal coupling}, i.e.~a pair $(X,Y)$ of random variables on a suitable complete probability space with $\Law\,X=\mu$ and $\Law\,Y=\nu$ such that $\smash{W_q(\mu,\nu)=\Vert X-Y\Vert_{L^q(\Omega,\scrF,\PPP)}}$ \cite{villani2009}*{Thm.~4.1}.
The following well-known property is specific to the linear structure of $\smash{\R^d}$.

\begin{lemma}[Wasserstein distance of translations]\label{Le:TransWasser} Given $\smash{v\in \R^d}$, let $T_v$ be the induced translation from \eqref{Eq:Translation}. Then for every $q\in[1,\infty]$ and every $\smash{\mu\in\Prob(\R^d)}$,
    \begin{align}
        W_q(\mu,(T_v)_\push\mu) = \vert v\vert.
    \end{align}
\end{lemma}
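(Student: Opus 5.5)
We will prove the two inequalities $W_q(\mu,(T_v)_\push\mu)\leq\vert v\vert$ and $W_q(\mu,(T_v)_\push\mu)\geq\vert v\vert$ separately, first for finite $q\in[1,\infty)$, and then pass to $q=\infty$ via \eqref{Eq:Winfty}.

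For the upper bound we use the translation coupling. On any complete probability space carrying a random vector $X$ with $\Law\,X=\mu$, we set $Y:=X+v=T_v(X)$. Then $\Law\,Y=(T_v)_\push\mu$ and $X-Y=-v$ holds identically. Hence $\Vert X-Y\Vert_{L^q}=\vert v\vert$ for every $q\in[1,\infty]$, and the definition \eqref{Eq:Wasserstein} gives $W_q(\mu,(T_v)_\push\mu)\leq\vert v\vert$. This also shows the distance is finite, although we will not need the existence of an optimal coupling.

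For the lower bound it suffices to treat $q=1$, because $W_q$ is nondecreasing in $q$. The only subtle point is that $\mu$ is an arbitrary probability measure, possibly without a first moment, so we avoid comparing means directly. Let $(X,Y)$ be any coupling of $\mu$ and $(T_v)_\push\mu$. We may assume $v\neq 0$ and put $e:=v/\vert v\vert$. Since $\vert Y-X\vert\geq (Y-X)\cdot e$, it is enough to show $\EEE[(Y-X)\cdot e]\geq\vert v\vert$ whenever $\EEE\vert X-Y\vert<\infty$, the case of an infinite expectation being trivial. We truncate with a bounded Lipschitz function: for $R>0$ let $\phi_R(z):=\max\{-R,\min\{R,z\cdot e\}\}$, which is $1$-Lipschitz and bounded. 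Then
\begin{align*}
\EEE\vert X-Y\vert\geq\EEE\big[\phi_R(Y)-\phi_R(X)\big]=\int_{\R^d}\big[\phi_R(z+v)-\phi_R(z)\big]\d\mu(z).
\end{align*}
For each fixed $z$, the integrand converges to $(z+v)\cdot e-z\cdot e=\vert v\vert$ as $R\to\infty$. Moreover, it is bounded by $\vert v\vert$ in absolute value, since $\phi_R$ is $1$-Lipschitz. Dominated convergence therefore yields $\EEE\vert X-Y\vert\geq\vert v\vert$. Taking the infimum over all couplings gives $W_1(\mu,(T_v)_\push\mu)\geq\vert v\vert$.

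Combining the two bounds gives equality for every $q\in[1,\infty)$. For $q=\infty$, we use that the upper bound holds directly and the lower bound follows from monotonicity in $q$, or alternatively from \eqref{Eq:Winfty}. The main obstacle is the lower bound for measures lacking finite first moments, and the truncation $\phi_R$ is exactly what handles it.
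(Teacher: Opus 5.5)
Your proof is correct and follows essentially the same route as the paper: the translation coupling $(X,X+v)$ for the upper bound, and for the lower bound the truncated projection $\phi_R$ onto $v/\vert v\vert$ combined with dominated convergence. The only difference is cosmetic. You bound an arbitrary coupling and then take the infimum, reducing to $q=1$ by monotonicity in $q$. The paper instead fixes a $W_q$-optimal coupling and applies H\"older's inequality, so your version does not need the existence of optimal couplings.
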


\begin{proof} By \eqref{Eq:Winfty}, it suffices to show the statement when $q$ is finite.

We first show ``$\leq$''. Let $X$ be a random vector with law $\mu$. The random vector $X+v$ has law $(T_v)_\push\mu$; in particular, $(X,X+v)$ is a coupling of $\mu$  and $(T_v)_\push\mu$. Thus,
\begin{align*}
    W_q(\mu,(T_v)_\push\mu) \leq \EEE\big[\vert X-(X+v)\vert^q\big]^{1/q}=\vert v\vert.
\end{align*}

Now we show ``$\geq$''. By nonnegativity of the $q$-Wasserstein distance, there is nothing to prove if $v$ is zero; thus, we may and will assume $v$ is not zero. The claim is also trivial if $\smash{W_q(\mu,(T_v)_\push\mu)=\infty}$; thus, we may and will assume $\smash{W_q(\mu,(T_v)_\push\mu)}$ is finite. In turn, this allows us to fix a $\smash{W_q}$-optimal coupling $(X,Y)$ of $\mu$ and $(T_v)_\push\mu$. Given $n\in\N$, let us define the function $\smash{\varphi_n\colon\R^d\to\R}$ by $\varphi_n(x) := (-n)\vee (n\wedge (x\cdot v/\vert v\vert))$. By Hölder's inequality and since the function $\varphi_n$ is $1$-Lipschitz continuous and bounded,
\begin{align*}
    W_q(\mu,(T_v)_\push\mu) &= \EEE\big[|X-Y|^q\big]^{1/q}\\
    &\ge \EEE\big[|X-Y|\big]\\
    &\ge \EEE\big[\varphi_n(Y) -\varphi_n(X)\big]\\
    &=\EEE\big[\varphi_n(X+v) -\varphi_n(X)\big],
\end{align*}
where the last identity holds since $Y$ and $X+v$ possess the same law. Using $1$-Lipschitz continuity again implies
\begin{align*}
    \sup\{\big\vert\varphi_n(X+v) -\varphi_n(X)\big\vert : n\in\N\} \leq \vert v\vert \quad\PPP\textnormal{-a.s.};
\end{align*}
consequently, Lebesgue's dominated convergence theorem yields
\begin{align*}
    W_q(\mu,(T_v)_\push\mu) \geq \lim_{n\to\infty} \EEE\big[\varphi_n(X+v) -\varphi_n(X)\big] = \vert v\vert.
\end{align*}
This is the desired lower bound.
\end{proof}
\cref{Le:TransWasser} combines with \cref{Le:Translation}, more precisely \eqref{Eq:Law}, to yield the following.

\begin{proposition}[Wasserstein isometry for Barenblatt flow]\label{Pr:WasserIso} For every $q\in[1,\infty]$, every $\smash{x,y\in\R^d}$, and every $t\in\R_+$, the Barenblatt flow $\sfh$ obeys
\begin{align}
W_q(\sfh_t\delta_x,\sfh_t\delta_y) = \vert x-y\vert.
\end{align}
\end{proposition}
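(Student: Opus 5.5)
The plan is to combine \cref{Le:Translation} with \cref{Le:TransWasser}, as announced just before the statement.

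First, fix $q\in[1,\infty]$, points $x,y\in\R^d$, and $t\in\R_+$. Let $X$ be a probabilistically weak solution to \eqref{Eq:ClosedMcKeanVlasov} starting at $x$; such a solution exists by \cref{Th:ExistencePWS}. By \cref{Le:Translation}, the translated process $Y:=X-x+y$ is a probabilistically weak solution starting at $y$. The identity \eqref{Eq:Law} then gives
\begin{align*}
\Law\,Y_t=(T_{y-x})_\push\Law\,X_t.
\end{align*}

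The one subtle point is well-definedness. The Barenblatt flow $\sfh_t\delta_x$ is the law of \emph{some} solution started at $x$, and solutions need not be unique. However, condition c.\ of the definition fixes the marginal law: for $t>0$ it is $\sfh_t\delta_x=w^x(t,\cdot)\,\Leb^d$, independently of which solution is chosen. For $t=0$, \eqref{Eq:Law0} gives $\sfh_0\delta_x=\delta_x$. Moreover, the explicit form $w^y(t,\cdot)=w^x(t,\cdot-(y-x))$ from \cref{Def:Barenblatt} shows directly that
\begin{align*}
\sfh_t\delta_y=(T_{y-x})_\push\sfh_t\delta_x,
\end{align*}
again independently of the choice of solution. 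In the case $t=0$, this reads $\delta_y=(T_{y-x})_\push\delta_x$.

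Finally, apply \cref{Le:TransWasser} with $\mu:=\sfh_t\delta_x$ and $v:=y-x$. This yields
\begin{align*}
W_q(\sfh_t\delta_x,\sfh_t\delta_y)=W_q\big(\mu,(T_v)_\push\mu\big)=\vert y-x\vert,
\end{align*}
which is the claim.

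I do not expect any genuine obstacle. The only point needing care is the observation that the flow is determined by the marginals alone, which settles the ambiguity coming from possible non-uniqueness of solutions.
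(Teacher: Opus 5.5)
Your proof is correct and follows the paper's argument: the paper also obtains the proposition by combining the translation identity \eqref{Eq:Law} from \cref{Le:Translation} with \cref{Le:TransWasser}, applied to $\mu=\sfh_t\delta_x$ and $v=y-x$. Your extra remark is a worthwhile clarification that the paper leaves implicit: $\sfh_t\delta_x$ is determined by the prescribed marginals $w^x(t,\cdot)\,\Leb^d$ (and equals $\delta_x$ at $t=0$), so the translation relation holds whichever solution is chosen.
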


Noting  $W_q(\delta_x,\delta_y) = \vert x-y\vert$ for every $q\in[1,\infty]$ and every $\smash{x,y\in\R^d}$, the preceding result states $\sfh_t$ is a $W_q$-isometry when acting on Dirac masses for every $t\in\R_+$.

\begin{remark}[Euclidean heat kernel] With the same argument based on the explicit form of the classical heat kernel, the preceding equality holds for the dual heat flow on $\smash{\Prob(\R^d)}$. This fact is well-known, but we could not locate an explicit reference.
\end{remark}

We now show gradient estimates for the dual Barenblatt flow $\sfp$. As the latter is defined by convolution against a translation-invariant and spatially compactly supported probability kernel, these would follow in principle from the identity $\nabla \sfp_t f=\sfp_t\nabla f$ on $\smash{\R^d}$ for  every $C^1$-function $f\colon \R^d\to\R$ and every $t\in\R_+$ plus  Jensen's inequality. (In particular, the gradient estimate below is sharp.) Anticipating later generalizations, we will instead use Kuwada's duality argument \cite{kuwada2010}*{Prop. 3.1}, whose proof we partly record for convenience.

\begin{theorem}[Gradient estimate for dual Barenblatt flow]\label{Th:DualBarFlo} For every given $q'\in [1,\infty)$, every $C^1$-function $f\colon\R^d\to\R$, and every $t\in\R_+$, the dual Barenblatt flow $\sfp$ obeys
\begin{align}
\vert \nabla \sfp_tf(x)\vert^{q'} \leq \sfp_t\big[\vert\nabla f\vert^{q'}\big](x).
\end{align}
\end{theorem}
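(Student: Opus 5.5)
We follow Kuwada's duality argument: combine the $W_q$-isometry of \cref{Pr:WasserIso}, with $q$ the Hölder conjugate of $q'$, and a local Lipschitz estimate for $\sfp_t f$ obtained from a coupling. The case $t=0$ is trivial since $\sfh_0\delta_x=\delta_x$, so we fix $t>0$ and $x\in\R^d$. Let $q\in(1,\infty]$ be the conjugate exponent of $q'$, with $q=\infty$ if $q'=1$.

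First we establish differentiability and the pointwise bound. By \cref{Le:Translation}, $\sfh_t\delta_y=(T_{y-x})_\push\sfh_t\delta_x$. Hence
\begin{align*}
\sfp_tf(y)=\int_{\R^d} f(z+y-x)\d\sfh_t\delta_x(z).
\end{align*}
Since $\sfh_t\delta_x$ is supported in the compact ball $\overline{B}_{R(t)}(x)$ and $f$ is $C^1$, differentiation under the integral shows $\sfp_tf$ is $C^1$. Rather than using this formula for the gradient directly, we argue by coupling. For $y$ near $x$, take a $W_q$-optimal coupling $(Z,Z')$ of $\sfh_t\delta_x$ and $\sfh_t\delta_y$; for instance $Z'=Z+y-x$, which by \cref{Pr:WasserIso} is optimal with $\Vert Z-Z'\Vert_{L^q}=\vert x-y\vert$. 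By the mean value theorem,
\begin{align*}
\vert\sfp_tf(y)-\sfp_tf(x)\vert\le \EEE\Big[\int_0^1\vert\nabla f\vert\big(Z+\lambda(Z'-Z)\big)\d\lambda\;\vert Z'-Z\vert\Big].
\end{align*}
Applying Hölder's inequality with exponents $q'$ and $q$, this is at most
\begin{align*}
\EEE\Big[\int_0^1\vert\nabla f\vert^{q'}\big(Z+\lambda(Z'-Z)\big)\d\lambda\Big]^{1/q'}\,\vert x-y\vert.
\end{align*}

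The main step is passing $y\to x$ in this bound. The interpolated points lie within distance $\vert x-y\vert$ of $Z$, because $\vert Z'-Z\vert\le\vert x-y\vert$ almost surely; this holds because the coupling also realizes $W_\infty$, which dominates $W_q$. Moreover $Z$ ranges in a compact set, so uniform continuity of $\vert\nabla f\vert^{q'}$ on compacts gives convergence of the first factor to $\EEE[\vert\nabla f\vert^{q'}(Z)]^{1/q'}=\sfp_t[\vert\nabla f\vert^{q'}](x)^{1/q'}$. Hence the local Lipschitz constant satisfies
\begin{align*}
\limsup_{y\to x}\frac{\vert\sfp_tf(y)-\sfp_tf(x)\vert}{\vert x-y\vert}\le \sfp_t\big[\vert\nabla f\vert^{q'}\big](x)^{1/q'}.
\end{align*}
For $C^1$ functions the left-hand side equals $\vert\nabla\sfp_tf(x)\vert$. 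Raising both sides to the power $q'$ yields the claim.

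The only delicate point is the choice of coupling. Kuwada's general argument uses an arbitrary optimal coupling and needs $W_\infty$-control or uniform bounds to justify the limit. Here this is immediate, since the translation coupling is simultaneously optimal for every $q$ by \cref{Le:TransWasser}. The sharpness remark would follow by testing linear $f$, for which equality holds.
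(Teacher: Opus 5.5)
Your proof is correct and follows essentially the paper's route: a Kuwada-type bound on the difference quotient of $\sfp_t f$ via a coupling of $\sfh_t\delta_x$ and $\sfh_t\delta_y$ with $\vert X-Y\vert\leq\vert x-y\vert$ almost surely, followed by the limit $y\to x$. The differences are technical. You use the explicit translation coupling, the integral mean value theorem and Hölder with the conjugate exponent, whereas the paper first reduces to $q'=1$ by Jensen's inequality and then works with an abstract $W_\infty$-optimal coupling, the local slope $G_R$ and dominated convergence. Your check that $\sfp_t f$ is $C^1$, together with the $\limsup$ in the last step, makes rigorous a point the paper passes over; the price is that you rely on the Euclidean translation structure, which the paper's local-slope formulation is designed to avoid.
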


\begin{proof} By Jensen's inequality, it suffices to consider the case when $q'$ is $1$. Given distinct points $\smash{x,y\in\R^d}$, by \cref{Pr:WasserIso} there exists a $W_\infty$-optimal coupling $(X,Y)$ of $\sfh_t\delta_x$ and $\sfh_t\delta_y$, which in particular obeys $\vert X-Y\vert\leq \vert x-y\vert$ $\PPP$-a.s.~by \eqref{Eq:Winfty}. Given $R>0$, we define the locally bounded function $\smash{G_R\colon \R^d\to\R}$ by
\begin{align}
    G_R(x) := \sup\!\Big\lbrace\frac{\vert f(z)-f(x)\vert}{\vert z-x\vert} : z\in B_R(x)\setminus\{x\}\Big\rbrace
\end{align}
which is universally measurable, cf.~e.g.~Ambrosio--Gigli--Savaré \cite{ambrosio-gigli-savare2014-riemannian}*{Lem.~2.5}. Thus,
\begin{align}
    \vert\sfp_tf(x)-\sfp_tf(y)\vert &\leq \EEE\big[\vert f(X)-f(Y)\vert\big]\\
    &= \EEE\Big[\frac{\vert f(X)-f(Y)\vert}{\vert X-Y\vert}\,\vert X-Y\vert\,\One_{\{X\neq Y\}}\Big]\\
    &\leq \vert x-y\vert\,\EEE\big[G_{\vert x-y\vert}(X)\big].
\end{align}
The latter expectation depends only on the law of  $X$. Since the latter has compact support and $f$ is Lipschitz continuous on it, Lebesgue's dominated convergence theorem implies
\begin{align}
    \lim_{y\to x}\EEE\big[G_{\vert x-y\vert}(X)\big] = \EEE\big[\vert\nabla f\vert(X)\big] = \sfp_t\vert\nabla f\vert(x).
\end{align}
On the other hand, since
\begin{align}
    \lim_{y\to x} \frac{\vert\sfp_tf(x)-\sfp_tf(y)\vert}{\vert x-y\vert} = \vert\nabla\sfp_tf\vert(x),
\end{align}
the proof is terminated.
\end{proof}

\addtocontents{toc}{\protect\setcounter{tocdepth}{1}}

\subsection*{Declaration of GenAI and AI-assisted technologies in the writing process} Following the recommendations of the \emph{Leiden Declaration on Artificial Intelligence and Mathematics} (\url{https://leidendeclaration.ai}), the author discloses that large language models (OpenAI's ChatGPT in the initial stage, Anthropic's Claude afterwards) were used in the preparation of this article. The conceptual framework, the main ideas, and the form of the central results are due to the author and were provided to the models as input. The models were employed to carry out routine computations and assisted with the typesetting and wording of the manuscript. Throughout this process, the author repeatedly intervened, corrected the output where necessary, and verified all arguments in detail. The responsibility for the correctness and adequacy of all arguments and results, as well as for the completeness and accuracy of citations, remains exclusively with the author.

\bibliographystyle{amsrefs}
\bibliography{library}
\end{document}